\newcommand{\im}{\operatorname{im}}
\newcommand{\N}{\mathbb{N}}
\newcommand{\vertex}[3]{\node [vertex] (#1) at (#2, #3 * 1.7) {};}
\newcommand{\edge}[2]{\draw (#1) -- (#2);}
\newcommand{\arc}[2]{{\draw[-latex] (#1) edge (#2);}}
\newcommand{\Tran}{\mathrm{Tran}}
\newcommand{\Sing}{\mathrm{Sing}}
\newcommand{\Sym}{\mathrm{Sym}}
\newcommand{\Alt}{\mathrm{Alt}}
\newcommand{\rk}{\mathrm{rk}}
\newcommand{\id}{\mathrm{id}}
\newcommand{\genset}[1]{\langle#1\rangle}
\newcommand{\set}[2]{\{#1:#2\}}
\newcommand{\LL}{\mathscr{L}}
\newcommand{\R}{\mathscr{R}}
\newcommand{\J}{\mathscr{J}}
\newcommand{\K}{\mathscr{K}}
\renewcommand{\H}{\mathscr{H}}
\newcommand{\easty}[2]{(#1 \to #2)}
\newcommand{\nset}{[n]}
\theoremstyle{plain}
\newtheorem{theorem}[equation]{Theorem}
\newtheorem{corollary}[equation]{Corollary}
\newtheorem{proposition}[equation]{Proposition}
\newtheorem{lemma}[equation]{Lemma}
\newtheorem*{claim*}{Claim}
\theoremstyle{definition}
\newcounter{claimcounter}
\numberwithin{claimcounter}{equation}
\newtheorem{claim}[claimcounter]{Claim}
\begin{document}

\title{Structural aspects of semigroups based on digraphs}

\author{James East\footnote{Centre for Research in Mathematics, School of
Computing, Engineering and Mathematics, Western Sydney University,
Locked Bag 1797, Penrith NSW 2751, Australia. Email:
j.east@westernsydney.edu.au}, Maximilien Gadouleau\footnote{School of
Engineering and Computing Sciences, Durham University, South Road, Durham DH1
3LE, UK. Email: m.r.gadouleau@durham.ac.uk}, and James D.
Mitchell\footnote{School of Mathematics and Statistics, University of St
Andrews, St Andrews, Fife KY16 9SS, UK. Email: jdm3@st-andrews.ac.uk}}
\date{\today}
\maketitle

\begin{abstract}
  Given any digraph $D$ without loops or multiple arcs, there is a natural
  construction of a semigroup $\langle D\rangle$ of transformations. To every
  arc $(a,b)$ of $D$ is associated the idempotent transformation $(a\to b)$
  mapping $a$ to $b$ and fixing all vertices other than $a$. The semigroup
  $\langle D\rangle$ is generated by the idempotent transformations $(a\to b)$
  for all arcs  $(a,b)$ of $D$.
  
  In this paper, we consider the question of when there is a transformation in
  $\langle D\rangle$ containing a large cycle, and, for fixed $k\in \mathbb N$,
  we give a linear time algorithm to verify if $\langle D\rangle$ contains a
  transformation with a cycle of length $k$.  We also classify those digraphs
  $D$ such that $\langle D\rangle$ has one of the following properties:
  inverse, completely regular, commutative, simple, 0-simple, a semilattice, a
  rectangular band, congruence-free, is $\mathscr{K}$-trivial or
  $\mathscr{K}$-universal where $\mathscr{K}$ is any of Green's $\mathscr{H}$-,
  $\mathscr{L}$-, $\mathscr{R}$-, or $\mathscr{J}$-relation, and when $\langle
  D\rangle$ has a left, right, or two-sided zero.
\end{abstract}

\section{Introduction}

A \textit{transformation} of degree $n\in \N$ is a function from $\{1, \ldots,
n\}$ to itself.  A \textit{transformation semigroup} is a semigroup consisting
of transformations of equal degree and with the operation of composition of
functions.  For the sake of brevity we will denote $\{1,\ldots,n\}$ by $\nset$.
We define $(a \to b)$ to be the transformation defined by
\begin{equation*} 
v (a \to b) = 
  \begin{cases}
    b & \text{if } v = a\\
    v & \text{otherwise}
  \end{cases} 
\end{equation*}
where $a, b \in \nset$ and $a \ne b$.  A \textit{digraph} is an ordered pair
$(V, A)$, where $V$ is a set whose elements are referred to as
\textit{vertices}, and $A\subseteq V\times V$ is a set of ordered pairs called
\textit{arcs}.  We identify a transformation $(a \to b)$ with an arc $(a, b)$
in a digraph and we refer to $(a \to b)$ as an arc.  If $D$ is a digraph,
we denote by $\langle D \rangle$ the semigroup generated by
the arcs of $D$, and we refer to such a semigroup as \textit{arc-generated}. 

Arc-generated semigroups were first introduced by John Rhodes in the
1960s~\cite[Definition 6.51]{R10}, under the name \textit{semigroups of flows}.
In \cite{R10}, Rhodes was largely concerned with determining the maximal
subgroups of an arc-generated semigroup, and he conjectured that every such
subgroup was a direct product of cyclic, alternating, and symmetric groups.
This conjecture was recently proved in a remarkable paper~\cite{HNP17} by
Horv{\'a}th, Nehaniv, and Podoski. 

Many famous examples of semigroups are arc-generated.  Perhaps the best known
example is the semigroup $\Sing_n$ of all non-invertible, or singular,
transformations on $\nset$, which was shown to be arc-generated by  J.~M.~Howie
in~\cite{H66}.  Other examples include the semigroup of singular
order-preserving transformations~\cite{A62}, and the so-called
\textit{Catalan semigroup}~\cite{H93, S96}, which are generated by the arcs of 
the digraphs $\set{(i, i + 1), (i + 1, i)}{i\in\{1, \ldots, n -1\}}$ 
and $\set{(i, i + 1)}{i\in\{1, \ldots, n -1\}}$, respectively;
these digraphs can be seen in Figure~\ref{fig-catalan}.

\begin{figure}
\begin{center}
\begin{tikzpicture}[scale=1.2,vertex/.style={circle, 
  draw, 
  inner sep=0.08cm}]
	\node [vertex] (q1) at  (0,1) {1};
	\node [vertex] (q2) at  (1,1) {2};
	\node [vertex] (q3) at  (2,1) {3};
	\node [vertex] (qn) at  (4,1) {$n$};
         \draw[-{latex}] (q1) to [bend left=20] (q2);	
         \draw[-{latex}] (q2) to [bend left=20] (q1);	
         \draw[-{latex}] (q2) to [bend left=20] (q3);	
         \draw[-{latex}] (q3) to [bend left=20] (q2);	
         \draw[dotted] (q3)--(qn);
	\node () at (6,1) {and};
\begin{scope}[shift={(8,0)}]
	\node [vertex] (q1) at  (0,1) {1};
	\node [vertex] (q2) at  (1,1) {2};
	\node [vertex] (q3) at  (2,1) {3};
	\node [vertex] (qn) at  (4,1) {$n$};
         \draw[-{latex}] (q1) to (q2);	
         \draw[-{latex}] (q2) to (q3);	
         \draw[dotted] (q3)--(qn);
\end{scope}
\end{tikzpicture}
  \caption{The digraphs $D$ where $\genset{D}$ is the semigroup of singular 
  order-preserving transformations (left) or the Catalan semigroup (right).}
  \label{fig-catalan}
\end{center}
\end{figure}
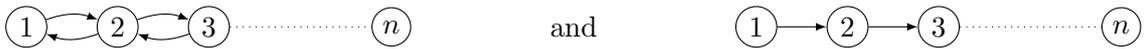

In~\cite{H78}, Howie showed that $\Sing_n$ is generated by $\frac{1}{2}n(n-1)$,
but no fewer, arcs.  In \cite{GH87} it was shown that $\frac{1}{2}n(n-1)$ is
the minimum size of any generating set for $\Sing_n$ whether it consists of
arcs or not.  It was shown in~\cite{H78} that $\Sing_n$ is generated by the
arcs of a digraph $D$ if and only if $D$ is strongly connected and $D$ contains
a tournament.  As a corollary, the minimal-size idempotent generating sets of
$\Sing_n$ are in one-one correspondence with the strongly connected tournaments
on~$n$ vertices; these were enumerated by Wright \cite{W70}.  In \cite{DE16} it
was shown that a digraph $D$ is strongly connected and contains a tournament if
and only if it contains a strongly connected tournament. Hence every
idempotent generating set for $\Sing_n$ contains one of minimum size, something
that is not true for generating sets of $\Sing_n$, in general. 

Several authors have classified those digraphs $D$ such that $\genset{D}$ has a
specific semigroup property. For instance, in \cite{YY06} those digraphs $D$
such that $\genset{D}$ is regular are classified; and in \cite{Cameron2016aa}
those $D$ where $\genset{D}$ is a band are classified.  In \cite{YY06, YY09},
necessary and sufficient conditions on digraphs $D$ and $D'$ are given so that
$\genset{D} = \genset{D'}$ or $\genset{D}\cong\genset{D'}$, respectively.  In
this paper, we continue in this direction, by classifying those digraphs $D$
for which the semigroup $\genset{D}$ has one of a variety of properties.

The paper is organised as follows. In Section~\ref{sec:preliminaries} we review
some relevant terminology and basic results about digraphs and semigroups. In
Section~\ref{sec:cyclic} we investigate the presence of transformations with
long cycles in arc-generated semigroups and classify those arc-generated
semigroups that are $\H$-trivial. It is possible that
Proposition~\ref{prop:L+R} and the converse of Proposition~\ref{prop:l=2} in
Section~\ref{sec:cyclic} can be proved using Theorem~1(5)(b) and Lemma 15
from~\cite{HNP17}. Our proofs were produced independently of the results
in~\cite{HNP17}, and are relatively concise and self-contained, and so we have
included the proofs for the sake of completeness.  Arc-generated semigroups
that are $\LL$-, $\R$- or $\J$-trivial are classified in
Section~\ref{sec:Green}.  Further classes of arc-generated
semigroups---including  bands, completely regular semigroups, inverse
semigroups, semilattices and commutative semigroups---are classified in
Section~\ref{sec:properties}. Finally, properties related to left and right
zeros are classified in Section~\ref{sec:zeros}, which among other things,
allows us to classify those arc-generated semigroups that are rectangular
bands, simple, $0$-simple, or congruence-free.

Many of the results in Sections~\ref{sec:Green},~\ref{sec:properties},
and~\ref{sec:zeros} were suggested by initial computational experiments
conducted using the Semigroups package~\cite{JDM17} for GAP~\cite{GAP4}. 


\section{Preliminaries} \label{sec:preliminaries}
\subsection{Digraphs}
In this subsection, we review some terminology and basic results on digraphs.
We refer the reader to \cite{BG09a} for an authoritative account of digraphs.

Unless otherwise stated, the vertex set of
a digraph will be $\nset$ for some $n\in \N$. 

The \textit{in-degree} of a vertex $v$ in a digraph $D$ is the number of arcs
of the form $(u, v)$ in $D$; similarly, the \textit{out-degree} of $v$ is the
number of arcs of the form $(v, u)$ in $D$.  A vertex $v$ in a digraph $D$ is
called a \textit{sink} if the out-degree of $v$ is $0$. A vertex is
\textit{isolated} if it has no incoming or outgoing arcs. 

If $D = (V, A)$ is a digraph, and $U$ is subset of the vertices $V$ of $D$,
then the \textit{subdigraph of $D$ induced by $U$} is the digraph with vertices
$U$ and arcs $A \cap (U\times U)$. In general, a \textit{subdigraph} of $D =
(V,A)$ is any digraph $D' = (V', A')$ with $V' \subseteq V$ and $A' \subseteq
A\cap (V' \times V')$.

If $D=(V,A)$ is a digraph, and $\varepsilon$ is an equivalence relation on $V$,
then the \textit{quotient digraph} $D/\varepsilon$ is defined as follows.  The
vertex set is the set of all $\varepsilon$-classes of $V$, and if $W,U$ are
$\varepsilon$-classes, then $D/\varepsilon$ has the arc $(W,U)$ if and only if
$W\not=U$ and $D$ has an arc $(w,u)$ for some $w\in W$ and $u\in U$.

A \textit{walk} in a digraph is a finite sequence $(v_0, v_1, \ldots,
v_r)$, $r\geq 1$, of vertices such that $(v_i, v_{i + 1})$ is an arc for all
$i\in \{0, \ldots, r - 1\}$; the \textit{length} of this walk is $r$. A
\textit{path} is a walk where all vertices are distinct. A \textit{cycle} in a
digraph is a walk where $v_0 = v_r$ and all other vertices are distinct.
A digraph is called \textit{acyclic} if it has no cycles.

A \textit{graph} $G$ is defined to be a digraph where $(u, v)$ is an arc if and
only if $(v, u)$ is an arc in $G$. We refer to the pair of arcs above as the
\textit{edge} $\{u, v\}$. Vertices $u$ and $v$ of a graph $G$ are
\textit{adjacent} if $\{u, v\}$ is an edge of $G$. 

An induced subdigraph of a graph, is also a graph, which we refer to as an
\textit{induced subgraph}.  A \textit{spanning subgraph} (as opposed to an
induced subgraph) of a graph $G = (V, A)$ is any graph $H = (V, B)$ where $B
\subseteq A$. 

The \textit{degree} of a vertex in a graph is its in-degree, which equals
its out-degree.  If $u$ and $v$ are vertices of a graph $G$, then the
\textit{distance} from $u$ to $v$ is the length of a shortest path from $u$ to
$v$, if such a path exists.

If $G$ and $H$ are graphs, then $H$ is a \textit{minor} of $G$ if $H$ can be
obtained by successively deleting vertices, deleting edges, or contracting
edges of $G$. 

If $v$ is a vertex of a digraph $D$, then the \textit{strong component} of $v$
is the induced subdigraph of $D$ with vertices $u$ such that there is a path
from $u$ to $v$ and from $v$ to $u$. The \textit{underlying graph} of a digraph
$D$ is the graph with an edge $\{u,v\}$ for each arc $(u,v)$ of $D$. The
\textit{component} of $v$ is the induced subdigraph of $D$ with vertices $u$
such that there is a path from $u$ to $v$ in the underlying graph of $D$.
Every digraph is partitioned by its strong components and its components, and
the quotient of a digraph by its strong components is acyclic.  A
\textit{terminal} component of a digraph $D$ is a strong component $C$ such
that $(u, v)$ is not an arc in $D$ for all $u \in C$, $v \notin C$.
Alternatively, $C$ is terminal if it is a sink in the quotient of a digraph by
its strong components.  A strong component or a digraph is \textit{trivial} if
it only has one vertex. 

A graph $G$ is \textit{separable} if it can be decomposed into two connected
induced subgraphs $G_1$ and $G_2$ with exactly one vertex in common; a graph is
\textit{non-separable} if $G$ admits no such decomposition.  A \textit{block}
of a graph is an induced subgraph that is non-separable and is maximal with
respect to this property.

A graph $G$ is \textit{bipartite} if it can be decomposed into two empty
subgraphs $G_1$ and $G_2$. In other words, in a bipartite graph $G$ every edge
of connects a vertex from $G_1$ with a vertex from $G_2$.  A graph is
\textit{odd bipartite} if it is bipartite and it has an odd number of vertices.

We denote by $K_n$ the \textit{complete graph} with vertices $\nset$
and edges $\{u,v\}$ for all distinct $u,v \in \nset$; by $K_{k, 1}$
the \textit{star graph} with vertices $\{1, \ldots, k + 1\}$ and edges
$\{i, k + 1\}$ for all $i \in \{1, \ldots, k\}$. We denote by $P_n$ the
\textit{path graph}, or simply \textit{path} if there is no ambiguity, with vertices
$\nset$ and edges $\{i, i + 1\}$ for all $i \in \{1, \ldots, n-
1\}$. We denote by $C_n$ the \textit{cycle graph} with vertices $\{1, \ldots,
n\}$ and edges $\{1, n\}$ and $\{i, i + 1\}$ for all $i \in \{1, \ldots, n-
1\}$.


\subsection{Semigroups and monoids}
In this subsection, we review some terminology about semigroups.
We refer the reader to \cite{H95} and \cite{GM09} for further background
material about semigroups.

A \textit{semigroup} is a set with an associative binary operation.  A
\textit{monoid} is a semigroup $S$ with an \textit{identity}: i.e.\  an element
$e\in S$ such that $es = se = s$ for all $s \in S$. 
If $S$ is a semigroup, then $s\in S$ is an \textit{idempotent} if $s ^ 2 = s$.
If $S$ is a semigroup, then we denote by $S ^ 1$ the monoid obtained by
adjoining an identity $1_S\not \in S$ to $S$. 
A semigroup $S$ is \textit{regular} if for all $x\in S$ there exists $y\in S$
such that $xyx = x$. 
A \textit{subsemigroup} of a semigroup $S$ is a subset $T$ of $S$ that is
also a semigroup with the same operation as $S$; denoted $T\leq S$.

A \textit{congruence} on a semigroup $S$ is an equivalence relation
$\varepsilon$ on $S$ for which $(a,b)\in\varepsilon$ and $(c,d)\in\varepsilon$
imply $(ac,bd)\in\varepsilon$ for all $a,b,c,d\in S$.  A semigroup $S$ is
\textit{congruence-free} if the only congruences on $S$ are the universal and
trivial relations.

Let $S$ be a semigroup and let $x,y\in S$ be arbitrary. We say that $x$ and $y$
are \textit{$\LL$-related} if the principal left ideals generated by $x$ and $y$
in $S$ are equal; in other words, if $S^1x = \set{sx}{s\in S ^ 1}=S^1y =
\set{sy}{s\in S ^ 1}$.  We write $x\LL y$ to denote that $x$ and $y$ are
$\LL$-related.

Green's $\R$-relation is defined dually to Green's $\LL$-relation; Green's
$\H$-relation is the meet, in the lattice of equivalence relations on $S$, of
$\LL$ and $\R$. Green's $\J$-relation is defined so that $x,y \in S$ are
\textit{$\J$-related} if $S ^ 1 x S ^ 1 = S^1 y S ^ 1$.  We will refer to the
equivalence classes as $\mathscr{K}$-classes where $\mathscr{K}$ is any of
$\R$, $\LL$, $\H$, or $\J$.  We write $x\K y$ to indicate $(x, y) \in \K$, where
$\mathscr{K}$ is any of $\R$, $\LL$, $\H$, or $\J$.

We denote by $\Tran_n$ the monoid consisting of all of the transformations of
degree $n$ where $n\in \N$; called the \textit{full transformation monoid}.
This monoid plays the same role in semigroup theory as the symmetric group does
in group theory, in that every finite semigroup is isomorphic to a subsemigroup
of some $\Tran_n$. Green's relations on $\Tran_n$ can be described in terms of
the following natural parameters associated to transformations.  The
\textit{image} of a transformation $\alpha\in \Tran_n$ is the set
$$\im(\alpha)=\set{x\alpha}{x\in\{1, \ldots, n\}};$$
the \textit{kernel} of $\alpha$ is the equivalence relation
$$\ker(\alpha)=\set{(x,y)\in\{1, \ldots, n\}\times\{1, \ldots,
n\}}{x\alpha=y\alpha};$$
and the \textit{rank} of $\alpha$ is
$$\rk(\alpha)=|\im(\alpha)|.$$

It is well-known that two elements of $\Tran_n$ are $\R$-, $\LL$- or $\J$-
related if and only if they have the same kernel, image or rank, respectively;
see \cite[Exercise 2.6.16]{H95}.

A semigroup is \textit{aperiodic} if all of its subgroups are trivial. A
semigroup is \textit{$\mathscr{K}$-trivial} for $\mathscr{K} \in \{\R, \LL, \H,
\J\}$, if $x\mathscr{K} y$ implies $x = y$. 


\subsection{Arc-generated semigroups}

We now characterise some basic semigroup theoretic properties of
$\genset{D}$ in terms of digraph theoretic properties of $D$.

Suppose that $D$ is a digraph with vertex set $V$, that $v\in V$ is an isolated
vertex, and that $D'$ is the subdigraph of $D$ induced by $V\setminus \{v\}$.
Then it is clear that the arc-generated semigroups $\genset{D}$ and
$\genset{D'}$ are isomorphic. So, we may assume without loss of generality,
where appropriate and if it is convenient, that a digraph $D$ has no isolated
vertices.

The following proposition will allow us to only consider connected digraphs in
some cases; its proof is trivial and is omitted.  

\begin{proposition}\label{prop-direct-product}
  Let $D$ be a digraph with components $D_1, D_2, \ldots, D_k$ and no
  isolated vertices. Then $\genset{D}$ is isomorphic to
  $\genset{D_1} ^ 1\times\cdots\times \genset{D_k} ^ 1\setminus
  \{(1_{\genset{D_1}}, \ldots, 1_{\genset{D_k}})\}$.  \qed
\end{proposition}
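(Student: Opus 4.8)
The plan is to realise $\genset{D}$ concretely as a subsemigroup of a direct product of transformation monoids via restriction to components, and then to pin down the image precisely. Since $D_1, \ldots, D_k$ are the components of $D$, their vertex sets $V_1, \ldots, V_k$ partition $\nset$ and there are no arcs of $D$ joining distinct components. Consequently every generator $(a \to b)$ of $\genset{D}$ fixes each $V_i$ setwise, and hence so does every element of $\genset{D}$; moreover arcs lying in different components act on disjoint sets of points and therefore commute. Because $D$ has no isolated vertices, each $D_i$ contains at least one arc, so each $\genset{D_i}$ is nonempty.

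First I would record that $\id_{V_i} \notin \genset{D_i}$: every generator $(a \to b)$ of $\genset{D_i}$ has rank $|V_i| - 1$, and rank cannot increase under composition, so every element of $\genset{D_i}$ has rank at most $|V_i| - 1 < |V_i|$. This lets me identify $\genset{D_i}^1$ with the submonoid $\genset{D_i} \cup \{\id_{V_i}\}$ of $\Tran(V_i)$, the adjoined identity corresponding to $\id_{V_i}$. I then define
\[
\phi \colon \genset{D} \to \genset{D_1}^1 \times \cdots \times \genset{D_k}^1, \qquad \alpha \mapsto (\alpha|_{V_1}, \ldots, \alpha|_{V_k}),
\]
where $\alpha|_{V_i}$ denotes the restriction of $\alpha$ to the invariant set $V_i$. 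Since the $V_i$ are invariant, restriction respects composition, so $\phi$ is a homomorphism; and since the $V_i$ cover $\nset$, the full collection of restrictions determines $\alpha$, so $\phi$ is injective.

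It remains to compute the image. Writing $\alpha = g_1 \cdots g_m$ as a product of arcs with $m \ge 1$, the restriction $\alpha|_{V_i}$ is the product of those $g_\ell$ lying in $D_i$ (the remaining factors restrict to $\id_{V_i}$ and drop out), so $\alpha|_{V_i} \in \genset{D_i}$ if some arc of $\alpha$ lies in $D_i$ and equals $\id_{V_i}$ otherwise; in particular $\alpha|_{V_i} \in \genset{D_i}^1$ for every $i$. As $m \ge 1$, at least one component contains an arc of $\alpha$, so at least one coordinate $\alpha|_{V_j}$ lies in $\genset{D_j}$ and is therefore distinct from $\id_{V_j}$; hence $\phi(\alpha)$ is never the all-identity tuple. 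Conversely, given any tuple $(\gamma_1, \ldots, \gamma_k)$ in the product that is not all-identity, I would write each nonidentity $\gamma_i$ as a word in the arcs of $D_i$ and take $\alpha$ to be the concatenation of these words (in any order, which is immaterial since arcs in different components commute); this $\alpha$ is a nonempty product of arcs, hence lies in $\genset{D}$, and satisfies $\phi(\alpha) = (\gamma_1, \ldots, \gamma_k)$. Thus $\phi$ maps $\genset{D}$ isomorphically onto $\genset{D_1}^1 \times \cdots \times \genset{D_k}^1 \setminus \{(1_{\genset{D_1}}, \ldots, 1_{\genset{D_k}})\}$.

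The only genuinely delicate point—and what makes the target set a semigroup at all—is checking that deleting the identity tuple leaves a subsemigroup, which again rests on the rank observation: a product $(\gamma_i \delta_i)_i$ equals the identity tuple only if $\gamma_i \delta_i = \id_{V_i}$ for all $i$, forcing $\rk(\gamma_i) = \rk(\delta_i) = |V_i|$ and hence $\gamma_i = \delta_i = \id_{V_i}$, so the identity tuple cannot arise as a product of two nonidentity tuples. I expect this closure verification to be the main (if modest) obstacle, the rest being routine bookkeeping of restrictions.
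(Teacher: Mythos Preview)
Your proof is correct; the paper omits the argument entirely (declaring it trivial), and the restriction-to-components homomorphism you construct is exactly the natural way to flesh it out. The rank observation you single out as the ``delicate point'' is indeed what makes the adjoined identity genuinely new in each factor and guarantees the deleted product is a subsemigroup.
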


The next result is also trivial.

\begin{proposition}\label{prop-trivial}
  Let $D$ be a digraph. Then the following are equivalent: 
  \begin{enumerate}[\rm (i)]

    \item
      $\genset{D}$ is trivial;

    \item
      $\genset{D}$ is a group;

    \item
      $\genset{D}$ has a unique $\H$-class;

    \item
      $D$ has only one arc. \qed
  \end{enumerate}
\end{proposition}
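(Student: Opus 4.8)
The plan is to establish the cyclic chain of implications (iv) $\Rightarrow$ (i) $\Rightarrow$ (ii) $\Rightarrow$ (iii) $\Rightarrow$ (iv), of which the first three are immediate and only the last requires an argument. Throughout one assumes $D$ has at least one arc, so that $\genset{D}$ is nonempty. For (iv) $\Rightarrow$ (i): if $D$ has the unique arc $(a,b)$, then $(a\to b)$ is idempotent, so $\genset{D}=\{(a\to b)\}$ is trivial. For (i) $\Rightarrow$ (ii): a one-element semigroup is the trivial group. For (ii) $\Rightarrow$ (iii): a group consists of a single $\H$-class, since it is a single $\LL$-class and a single $\R$-class.

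The substantive implication is (iii) $\Rightarrow$ (iv), which I would prove by contraposition: assuming $D$ has at least two arcs, I would exhibit two elements of $\genset{D}$ lying in distinct $\H$-classes. First I would record the image and kernel of a generator: for an arc $(a,b)$, the transformation $(a\to b)$ has image $\nset\setminus\{a\}$, and its kernel has the single non-trivial class $\{a,b\}$, all other classes being singletons. Hence, for two arcs $(a,b)$ and $(c,d)$, equality of images forces $a=c$ while equality of kernels forces $\{a,b\}=\{c,d\}$; together these give $(a,b)=(c,d)$. Thus distinct arcs have distinct image–kernel pairs, and by the description of Green's relations on $\Tran_n$ recalled above (two transformations being $\H$-related exactly when they share both image and kernel), distinct generators are not $\H$-related in $\Tran_n$.

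To transfer this to $\genset{D}$ I would invoke the general fact that Green's relations of a subsemigroup refine those of the ambient semigroup: if $T\leq S$ and $x\,\H y$ in $T$, then $T^1x=T^1y$ and $xT^1=yT^1$, whence $S^1x=S^1y$ and $xS^1=yS^1$, so $x\,\H y$ in $S$. Applying this with $T=\genset{D}\leq\Tran_n=S$, two generators arising from distinct arcs, being $\H$-unrelated in $\Tran_n$, are $\H$-unrelated in $\genset{D}$; hence $\genset{D}$ has at least two $\H$-classes, which is the contrapositive of (iii) $\Rightarrow$ (iv). The only point demanding care, rather than a genuine obstacle, is to use Green's relations in the correct (refining) direction when passing from $\Tran_n$ to the subsemigroup $\genset{D}$; the computation of the images and kernels of the generators, and the resulting injectivity of the assignment of arcs to $\H$-classes of $\Tran_n$, is elementary.
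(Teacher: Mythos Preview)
Your proof is correct. The paper itself omits any argument for this proposition, declaring it ``also trivial'' and placing the \qedsymbol{} at the end of the statement; you have simply supplied the routine details the authors chose not to write out, and your chain (iv)$\Rightarrow$(i)$\Rightarrow$(ii)$\Rightarrow$(iii)$\Rightarrow$(iv) together with the contrapositive argument for the last implication is exactly the kind of verification they had in mind.
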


The semigroup $\genset{D}$ can contain arcs that are not present in $D$. It was
shown in \cite[Lemma~2.3]{YY06} that the set of arcs in $\genset{D}$ is 
$$\{(a \to b) : (a \to b) \in D \text{ or } (b \to a) \text{ belongs to a
cycle of } D \}.$$
The \textit{closure} of $D$, denoted $\bar{D}$, is the digraph on $\nset$ with the set of arcs as above; it is clear that $\genset{\bar{D}} =
\genset{D}$. By construction, $\bar{D} = D$ if and only if every strong
component $C$ of $D$ is a graph. We say that $D$ is \textit{closed} if $\bar{D}
= D$. 


\section{Cyclic properties} \label{sec:cyclic}

A \textit{cycle} of length $k$ in $\alpha\in \Tran_n$ is a sequence of distinct
points $a_0, a_1, \dots, a_{k-1}\in\nset$ such that $a_i \alpha = a_{i+1}$ for
all $i$, where the indices are computed modulo $k$.  This section is concerned
with cycles of transformations in an arc-generated semigroup.  In particular,
we are interested in the presence of \textit{long} cycles.  As mentioned in the
introduction, the results in this section are related to those in~\cite{HNP17},
where the authors describe the structure and actions of the maximal subgroups
of any arc-generated semigroup $\genset{D}$ in terms of properties of $D$.
Proposition~\ref{prop:L+R} and the converse of Proposition~\ref{prop:l=2} could
be proved using Theorem~1(5)(b) and Lemma 15 from~\cite{HNP17}. However,
determining the properties of the specific digraphs in
Propositions~\ref{prop:L+R} and~\ref{prop:l=2} required to apply the results
in~\cite{HNP17} is non-trivial, and requires the notation and terminology used
in~\cite{HNP17}.  Since the proofs presented in this section are
self-contained, and relatively concise, and were found independently
of~\cite{HNP17}, we have opted not to use the results of~\cite{HNP17}.

One application of the results in this section is a classification of the
digraphs $D$ such that $\genset{D}$ is $\H$-trivial.  In particular, we shall
prove the following result.  
\begin{proposition} \label{prop:H-trivial}
  Let $D$ be a digraph. Then $\genset{D}$ is $\H$-trivial if and only if all
  the strong components of $D$ are paths.
\end{proposition}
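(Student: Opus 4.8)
The plan is to reduce $\H$-triviality to the absence of cycles and then to analyse those cycles one strong component at a time. Since $\genset{D}$ is finite, it is $\H$-trivial if and only if it is aperiodic, i.e.\ has only trivial subgroups. A transformation $\alpha$ lies in a nontrivial subgroup precisely when it acts as a nontrivial permutation on the image of its idempotent power $\alpha^{t}$, which happens exactly when $\alpha$ has a cycle of length at least $2$. So the first step records the equivalence: $\genset{D}$ is $\H$-trivial if and only if no element of $\genset{D}$ has a cycle of length $\ge 2$. I would also replace $D$ by its closure $\bar{D}$, which is legitimate since $\genset{D}=\genset{\bar{D}}$, the two digraphs have the same strong components, and every strong component of $\bar{D}$ is a graph; thus ``strong component is a path'' may be read as ``its underlying graph is the path graph $P_m$.''

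The linchpin is a locality lemma: every cycle of an element of $\genset{D}$ lies inside a single strong component of $\bar{D}$. Indeed, if $\alpha=g_1\cdots g_r$ is a product of arcs and $v\alpha=w$, then $v,\,vg_1,\,vg_1g_2,\dots,w$ is a directed walk from $v$ to $w$ in $\bar{D}$; applied to a cycle $a_0\to a_1\to\cdots\to a_{k-1}\to a_0$ this yields a closed directed walk through all the $a_i$, so they lie in a common strong component $C$. Moreover, since the quotient by strong components is acyclic, the whole trajectory $a_i,\,a_ig_1,\dots$ must stay inside $C$: if it left $C$ it could never return. Hence, on the cycle points, $\alpha$ is a composite of moves along arcs of $\bar{D}$ internal to $C$.

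For the direction ``all strong components are paths $\Rightarrow$ $\H$-trivial'', suppose a cycle of length $k\ge 2$ lived in a path component $C\cong P_m$. Each internal move is along a path edge $i\leftrightarrow i\pm1$, and any such move preserves the natural linear order $\le_C$ on $C$; restricted to points whose entire trajectory stays in $C$, the element $\alpha$ is therefore order-preserving for $\le_C$. Choosing $a_0$ to be the $\le_C$-least cycle point gives $a_0\le_C a_j$ for all $j$, whence $a_1=a_0\alpha\le_C a_j\alpha=a_{j+1}$ for all $j$; so $a_1$ is also $\le_C$-least, forcing $a_1=a_0$ and contradicting $k\ge 2$. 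For the converse I would show that any non-path strong component $C$ produces an element with a $2$-cycle. A connected graph that is not a path either has a vertex of degree $\ge 3$ or is a cycle graph $C_m$ with $m\ge 3$. In the first case, taking such a vertex $c$ with three distinct neighbours $a,b,w$, the word $(a\to c)(c\to w)(b\to c)(c\to a)(w\to c)(c\to b)$ uses $w$ as a buffer and realises the transposition of $a$ and $b$. In the second, labelling the cycle $1,2,\dots,m$, the word $(1\to m)(2\to 1)(m\to m-1)\cdots(3\to 2)$ first decouples $1$ from $2$, sends $2$ to $1$, and walks $1$ the long way around to $2$, again producing the $2$-cycle $(1\ 2)$. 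Both words use only arcs internal to $C$, so the resulting elements lie in $\genset{\bar{D}}=\genset{D}$.

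I expect the main obstacle to be these converse constructions: exhibiting a genuine $2$-cycle even though every generator is a non-injective idempotent of rank $n-1$. The crucial idea is to isolate a buffer vertex so that the two points being swapped never share a common image during the computation—this is exactly what the two displayed words accomplish. Checking that they do amounts to tracking the image of each point through the product, which is routine but is the real content of the argument; the order-preservation step, by contrast, is the clean part once the locality lemma is in hand.
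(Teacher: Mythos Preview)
Your proof is correct and follows essentially the same route as the paper: reduce $\H$-triviality to aperiodicity (equivalently $l(D)=1$), localize cycles of elements to strong components, observe that path components yield only order-preserving maps, and for a non-path component produce an explicit cycle via either a degree-$\ge 3$ vertex or a cycle graph. The only cosmetic difference is that the paper packages the converse through its general lemmas on $l(G)$ (minor monotonicity, $l(C_n)=n-1$, and $l(K_{k,1})\ge k-1$), whereas you give direct $2$-cycle constructions tailored to the two cases.
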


\subsection{Preliminary results}
The length of a longest cycle of $\alpha$ is denoted as $l(\alpha)$ and for a
digraph $D$ we write
$$l(D) = \max \{ l(\alpha) : \alpha \in \genset{D} \}.$$

\begin{lemma} \label{lem:H-trivial}
  Let $D$ be a digraph.  Then the following are equivalent:
  \begin{enumerate}[\rm (i)]
    \item $l(D) = 1$;
    \item $\genset{D}$ is aperiodic;
    \item $\genset{D}$ is $\H$-trivial.
  \end{enumerate}
\end{lemma}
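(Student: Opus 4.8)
The plan is to prove the two equivalences (i) $\Leftrightarrow$ (ii) and (ii) $\Leftrightarrow$ (iii) separately, using throughout that $\genset{D}\leq\Tran_n$ is finite. The equivalence (ii) $\Leftrightarrow$ (iii) is the classical fact that a finite semigroup is aperiodic if and only if it is $\H$-trivial (see, e.g., \cite{H95, GM09}), so the substantive work lies in (i) $\Leftrightarrow$ (ii). For completeness I would recall why (ii) $\Leftrightarrow$ (iii) holds: every subgroup of a semigroup is contained in a single $\H$-class, the maximal subgroups being precisely the $\H$-classes that contain an idempotent, so $\H$-triviality forces every maximal subgroup, and hence every subgroup, to be trivial. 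The converse is the finite-semigroup structure result that the Schützenberger group of each $\H$-class $H$ has order $|H|$ and is isomorphic to a maximal subgroup of the semigroup, so triviality of all subgroups propagates to triviality of all $\H$-classes.

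For (i) $\Leftrightarrow$ (ii) I would work directly with the transformation action, setting up a correspondence between elements with a long cycle and nontrivial subgroups. First I observe that every $\alpha\in\genset{D}$ acts as a permutation on its eventual image $\bigcap_{m\geq 1}\im(\alpha^m)$, so $\alpha$ always has at least one cycle and $l(D)\geq 1$; thus $l(D)=1$ is equivalent to the statement that no element of $\genset{D}$ has a cycle of length at least $2$. To prove (ii) $\Rightarrow$ (i), suppose $l(D)\geq 2$, so some $\alpha$ has a cycle of length $k\geq 2$. Restricted to its eventual image, $\alpha$ is a permutation whose order is the least common multiple of its cycle lengths, hence at least $k\geq 2$. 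Therefore the monogenic semigroup $\genset{\alpha}$ contains a nontrivial cyclic subgroup and $\genset{D}$ is not aperiodic.

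For (i) $\Rightarrow$ (ii), suppose $\genset{D}$ is not aperiodic and let $G$ be a nontrivial subgroup with identity the idempotent $e$. Every $g\in G$ lies in the $\H$-class of $e$ and so has the same image and kernel as $e$. Since $\im(e)$ is a transversal of $\ker(e)$ and $g$ is constant on $\ker(e)$-classes, the restriction $g\mapsto g|_{\im(e)}$ is an injective homomorphism from $G$ into $\Sym(\im(e))$. As $G$ is nontrivial, some $g\neq e$ restricts to a nonidentity permutation of $\im(e)$, which has an orbit $a_0,a_1,\dots,a_{k-1}$ of size $k\geq 2$ with $a_i g=a_{i+1}$ (indices mod $k$); this is precisely a cycle of length $k\geq 2$ of the transformation $g$, so $l(D)\geq 2$ and (i) fails.

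The step requiring the most care is the faithful embedding $G\hookrightarrow\Sym(\im(e))$ together with the translation between a nontrivial permutation of $\im(e)$ and a genuine cycle of $g$ in the sense defined at the start of this section; the injectivity hinges on the fact that an element of the $\H$-class of $e$ is determined by its action on the transversal $\im(e)$. Everything else reduces either to the eventual-image normal form of a transformation or to the cited finite-semigroup dictionary between aperiodicity and $\H$-triviality.
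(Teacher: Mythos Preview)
Your proof is correct and follows essentially the same approach as the paper: both cite the standard equivalence of aperiodicity and $\H$-triviality for finite semigroups, and both prove (i)$\Leftrightarrow$(ii) via the contrapositives by passing between a cycle of length $\geq 2$ in some $\alpha$ and a nontrivial (sub)group obtained from powers of $\alpha$, using that an element of a group $\H$-class restricts to a permutation of its image. Your write-up is more detailed (e.g.\ the Sch\"utzenberger group justification and the explicit embedding $G\hookrightarrow\Sym(\im(e))$), but the underlying argument is the same.
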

\begin{proof}
  Conditions (ii) and (iii) are equivalent for any finite semigroup;
  see~\cite[Proposition 4.2]{P86}. 

  (i) $\Rightarrow$ (ii). We prove the contrapositive.  Suppose
  $\alpha\in\genset{D}$ belongs to a non-trivial subgroup and that $\alpha$ is
  not an idempotent.  Then the restriction of $\alpha$ to $\im(\alpha)$ is a
  non-trivial permutation, so $l(\alpha)\geq2$.

  (ii) $\Rightarrow$ (i). Again, we prove the contrapositive.  Suppose
  $\alpha\in\genset{D}$ has a cycle of length $k\geq2$, say $a_0, a_1, \dots,
  a_{k-1}$.  Choose $r\geq1$ such that $\alpha^r$ is an idempotent, and let $H$ be
  the $\H$-class of $\alpha^r$.  Then $H$ is a group, and $H=\set{\alpha^s}{s\geq
  r}$.  But $\alpha^r$ and $\alpha^{r+1}$ are distinct elements of $H$, since
  $a_0\alpha^r=a_0\not=a_1=a_0\alpha^{r+1}$.
\end{proof}

Since $\genset{D}=\genset{\bar D}$ for any digraph $D$, we clearly have
$l(D)=l(\bar D)$.  Thus, when studying $l(D)$, we can assume without loss of
generality that $D$ is closed. If $\alpha \in \genset{D}$, then any cycle of
$\alpha$ belongs entirely to a strong component of $D$. Therefore, if $D$ has
strong components $S_1, \dots, S_r$, then
\begin{equation}\label{eq:lS1...r}
  l(D) = \max \{ l(S_1), \dots, l(S_r)  \}.
\end{equation}
Thus, in this section we may assume without loss of generality, if it is
convenient, that $D$ is a connected graph. 

\begin{lemma} \label{lem:l(Cn)}
  For the cycle graph $C_n$, $n \ge 3$, we have $l(C_n) = n-1$.
\end{lemma}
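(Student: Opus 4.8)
The plan is to establish the two inequalities $l(C_n)\le n-1$ and $l(C_n)\ge n-1$ separately. For the upper bound the key observation is a rank argument that needs nothing special about $C_n$: every generating arc $(a\to b)$ has rank $n-1$, since it collapses $a$ onto $b$ and fixes all other vertices. Because rank cannot increase under composition, every element of $\genset{C_n}$ has rank at most $n-1$. Now if $a_0,a_1,\dots,a_{k-1}$ is a cycle of length $k$ of some $\alpha\in\genset{C_n}$, then each $a_{i+1}=a_i\alpha$ lies in $\im(\alpha)$, so all $k$ distinct cycle points lie in $\im(\alpha)$; hence $k\le\rk(\alpha)\le n-1$, giving $l(C_n)\le n-1$.

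For the lower bound I would exhibit a single product of arc-transformations of $C_n$ possessing a cycle of length $n-1$. Since $C_n$ is a graph it contains both orientations of each edge, so the forward arcs $(i\to i+1)$ and the wrap-around arc $(n\to 1)$ all belong to $\genset{C_n}$. Recalling that transformations act on the right, so that a product is applied left-to-right, the natural candidate is the ``rotation''
\[
\sigma=(n\to 1)(n-1\to n)(n-2\to n-1)\cdots(2\to 3)(1\to 2),
\]
that is, the arc $(n\to 1)$ followed by the arcs $(i\to i+1)$ taken in decreasing order of $i$. I would verify its action by tracking the trajectory of each point: the expected outcome is that $1$ and $n$ are both sent to $2$, while each $k\in\{2,\dots,n-1\}$ is sent to $k+1$. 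Thus $\sigma$ has image $\{2,\dots,n\}$ and its restriction there is the $(n-1)$-cycle $2\to 3\to\cdots\to n\to 2$, so $l(\sigma)=n-1$ and $l(C_n)\ge n-1$, completing the proof.

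The main obstacle is not the verification itself, which is a routine point-by-point computation once the right word is in hand, but rather choosing the correct product and, crucially, the correct composition order. The ``obvious'' orderings of these arcs tend to collapse almost everything to a single point and destroy any long cycle; for instance, composing the forward arcs in \emph{increasing} order of $i$ drives every point all the way around the chain and sends the whole of $\nset$ to $1$, yielding a rank-$1$ map with no nontrivial cycle. The subtlety is to order the generators so that each successive arc advances exactly one new point along the cycle while introducing only a single coincidence (here $1$ and $n$ landing together), thereby preserving an intact $(n-1)$-cycle.
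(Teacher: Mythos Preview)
Your proposal is correct and follows essentially the same approach as the paper: a rank argument for the upper bound (the paper just says ``clearly''), and an explicit product of arcs exhibiting an $(n-1)$-cycle for the lower bound. Your word $\sigma=(n\to 1)(n-1\to n)\cdots(1\to 2)$ differs from the paper's $(n-1\to n)(n-2\to n-1)\cdots(1\to 2)(n\to 1)$ only in the placement of the wrap-around arc, yielding the cycle $2,3,\ldots,n$ instead of $1,2,\ldots,n-1$, but the idea is identical.
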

\begin{proof}
  Clearly, for any digraph $G$ on $n$ vertices, $l(G) \le n-1$. Conversely, 
  $$
  (n-1\to n)(n-2\to n-1)\cdots(1\to2)(n\to1)\in\genset{C_n}
  $$
  has the cycle $1,2,\ldots,n-1$.
\end{proof}

If $G'$ is a subgraph of $G$, then $\genset{G'}\leq\genset{G}$, and so $l(G')
\le l(G)$, which will allow us to isolate subgraphs of $G$ in order to obtain
lower bounds on $l(G)$. In the next lemma, we extend this result to graph minors.

\begin{lemma} \label{cor:l_minor}
  If $G$ is a graph and $H$ is a minor of $G$, then $l(H) \le l(G)$.
\end{lemma}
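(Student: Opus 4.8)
The plan is to reduce to the case of a single edge contraction and then lift cycles. Recall that a minor $H$ of $G$ is obtained by a finite sequence of vertex deletions, edge deletions, and edge contractions. Deleting a vertex or an edge produces a subgraph, and the observation immediately preceding this lemma already gives $l(H')\le l(G)$ whenever $H'$ is a subgraph of $G$. Hence, arguing by induction on the number of operations, it suffices to prove that $l(H)\le l(G)$ when $H=G/e$ is obtained from $G$ by contracting a single edge $e=\{u,v\}$.

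Write $V(H)=(V(G)\setminus\{u,v\})\cup\{w\}$, where $w$ is the vertex resulting from identifying $u$ and $v$, and let $\pi\colon V(G)\to V(H)$ be the projection with $\pi(u)=\pi(v)=w$ and $\pi(x)=x$ for all other $x$. The key step is to lift each generator of $\genset{H}$ to an element of $\genset{G}$ in a way that is compatible with $\pi$. Concretely, I would define a map $L$ on the arcs of $H$ as follows. If $\{a,b\}$ is an edge of $H$ not incident to $w$, then it is an edge of $G$ and I set $L(a\to b)=(a\to b)$. If $b\ne w$ and $\{w,b\}$ is an edge of $H$, then $G$ contains an edge $\{u,b\}$ or $\{v,b\}$; assuming the former I set $L(w\to b)=(v\to u)(u\to b)$, which sends both $u$ and $v$ to $b$ and fixes everything else (the arc $(v\to u)$ is available precisely because $\{u,v\}$ is the contracted edge). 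Dually, for an arc $(a\to w)$ I set $L(a\to w)=(a\to u)$ when $\{a,u\}$ is an edge of $G$, and symmetrically in the remaining cases. In each case a direct check yields the equivariance
\[
\pi\bigl(x\cdot L(g)\bigr)=\pi(x)\cdot g \qquad\text{for all } x\in V(G),
\]
for every generating arc $g$ of $\genset{H}$.

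Extending $L$ multiplicatively to words and inducting on word length, the same identity holds for every $\beta\in\genset{H}$: choosing a word $g_1\cdots g_m$ representing $\beta$ and setting $\alpha=L(g_1)\cdots L(g_m)\in\genset{G}$, one obtains $\pi(x\alpha)=\pi(x)\beta$ for all $x$, so that $\alpha$ projects to $\beta$ under $\pi$. It then remains to lift cycles. Given a cycle $a_0,a_1,\dots,a_{k-1}$ of $\beta$ with $k\ge 2$, at most one of its points equals $w$, so after a cyclic rotation I may assume $a_0\ne w$. Starting from $x_0=a_0$ and setting $x_{i+1}=x_i\alpha$, the intertwining relation gives $\pi(x_i)=a_i$ for all $i$, and since $\pi$ is injective away from the single pair $\{u,v\}$, one finds $x_i=a_i$ whenever $a_i\ne w$, while the unique point equal to $w$ (if any) lifts to a vertex of $\{u,v\}$. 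Since $a_0\ne w$ forces $x_k=x_0$, the vertices $x_0,\dots,x_{k-1}$ are pairwise distinct and form a cycle of length $k$ for $\alpha$. Hence $l(G)\ge l(\alpha)\ge k$, and taking $\beta$ to realise $l(H)$ gives $l(H)\le l(G)$.

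The delicate point — and the step I expect to require the most care — is the behaviour at the merged vertex $w$: its two preimages $u$ and $v$ must be handled simultaneously, which is exactly what forces the two-arc lift $L(w\to b)=(v\to u)(u\to b)$ and makes essential use of the contracted edge. Once the equivariance of $L$ is established, the cycle-lifting argument is routine, because a cycle of $\beta$ passes through $w$ at most once and so the ambiguity in choosing preimages never accumulates.
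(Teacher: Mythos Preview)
Your proof is correct and follows essentially the same strategy as the paper's: reduce to a single edge contraction, lift each generating arc of $\genset{H}$ to a short product in $\genset{G}$ using the contracted edge to route through the merged vertex, extend multiplicatively, and conclude that cycles lift. The only cosmetic difference is that the paper identifies the merged vertex $w$ with one of the endpoints (calling it $n-1$), so the lifted element agrees \emph{exactly} with $\alpha$ on $V(H)$ and cycles lift for free; your projection/equivariance formulation instead requires the short cycle-lifting argument you give, but this is routine and your handling of the single possible occurrence of $w$ in a cycle is fine.
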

\begin{proof}
  For any $k \le n$ and $S \le \Tran_k$ and any $T \le \Tran_n$, we write $S
  \preceq T$ if, relabelling the vertices of $\{1, \ldots, k\}$ if necessary,
  for any $\alpha \in S$, there exists $\beta \in T$ such that $v \beta = v
  \alpha$ for all $v \in \{1, \ldots, k\}$.  It is clear that if $\genset{G}
  \preceq \genset{H}$, then $l(G) \leq l(H)$.  Hence it suffices to show that
  $\genset{H} \preceq \genset{G}$.  

  This clearly holds if $H$ is obtained from $G$ from deleting an edge or a
  vertex. Suppose that $H$ is obtained from $G$ by contracting the edge $\{n-1,
  n\}$. Let $B$ be the set of vertices that are adjacent to~$n$ but not to
  $n-1$ in $G$. 

  Let $\alpha \in \genset{H}$ be arbitrary. Then there exist arcs $\beta_1,
  \ldots, \beta_k \in H$ such that $\alpha = \beta_1 \beta_2 \cdots \beta_k$.
  If $\beta_i = (b \to n-1)$ for some $b\in B$, then we replace $\beta_i$ in
  the product for $\alpha$ by $(b \to n)(n \to n-1)$. Similarly, we replace any
  arc $(n-1 \to b)$, $b\in B$, by $(n-1 \to n)(n \to b)$.  If $\beta \in
  \genset{G}$ denotes this modified product, then $v \beta = v \alpha$ for all
  $v \in \{1, \ldots, n -1\}$, and so $\genset{H}\preceq \genset{G}$. 
\end{proof}

\begin{lemma} \label{lem:degree}
  Let $G$ be a graph. Then the following hold:
  \begin{enumerate}[\rm (i)]
    \item
      if $G$ has a vertex of degree $k$, then $l(G) \ge k - 1$;
    \item
      if $G$ contains a subgraph that is a tree with $k$ leaves, then 
      $l(G) \ge k - 1$;
    \item
      if $G$ is connected and $t$ is the number of vertices of degree not
      equal to $2$, then 
      $$l(G) \ge \frac{1}{4}(t-2) + 1.$$
  \end{enumerate}
\end{lemma}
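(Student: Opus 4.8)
The plan is to reduce all three parts to the single computation $l(K_{k,1})\ge k-1$ for the star graph: this settles (i) directly, and via minors it yields (ii) and drives (iii). For (i), a vertex of degree $k$ together with its $k$ incident edges is a copy of $K_{k,1}$ as a subgraph of $G$, so $\genset{K_{k,1}}\le\genset{G}$ and hence $l(G)\ge l(K_{k,1})$; it therefore suffices to exhibit one element of $\genset{K_{k,1}}$ with a cycle of length $k-1$. Writing $c=k+1$ for the centre and $1,\dots,k$ for the leaves, I would use the buffered-rotation product
$$\alpha=(k-1\to c)(c\to k)\,(k-2\to c)(c\to k-1)\cdots(1\to c)(c\to 2)\,(k\to c)(c\to 1).$$
A direct left-to-right trace should show that $\alpha$ induces the cycle $1,2,\dots,k-1$ of length $k-1$: the opening pair parks leaf $k-1$ at the buffer leaf $k$; the middle block shifts each leaf $i$ onto $i+1$ for $i=k-2,\dots,1$, processed from the top down so that each target has already been vacated; and the final pair empties the buffer onto leaf $1$, while the centre $c$ and the leaf $k$ both collapse into $1$. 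The only point to verify is that the cycle vertices are never merged prematurely, which holds because each leaf passes through $c$ exactly once and only after its target has been cleared; the instance $(2\to c)(c\to 3)(1\to c)(c\to 2)(3\to c)(c\to 1)$ with $c=4$ already exhibits the mechanism, inducing $1\mapsto2\mapsto1$.

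For (ii), if $G$ contains a tree $T$ with $k$ leaves, then the internal vertices of $T$ induce a connected subtree, and contracting that subtree to a single vertex turns $T$ into $K_{k,1}$. Thus $K_{k,1}$ is a minor of $T$, hence of $G$, and Lemma~\ref{cor:l_minor} gives $l(G)\ge l(K_{k,1})\ge k-1$. In particular (i) is just the special case $T=K_{k,1}$, so the two parts share the single star computation above.

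For (iii) the idea is to produce inside $G$ a subtree with at least $\tfrac14(t-2)+2$ leaves and then apply (ii), since a subtree with $m$ leaves gives $l(G)\ge m-1$. First I would suppress the degree-$2$ vertices, replacing each maximal path through degree-$2$ vertices by a single edge, to obtain a connected graph $H$ on exactly the $t$ vertices of degree $\ne 2$, every vertex of which now has degree $1$ or $\ge 3$; a subtree of $H$ pulls back to a subtree of $G$ with the same number of leaves, as each suppressed path becomes an internal (degree-$2$) segment rather than a new endpoint. It then remains to find a spanning tree of $H$ with many leaves, exploiting that $H$ has no degree-$2$ vertices: in any tree the number of leaves equals $2+\sum_{v:\deg\ge 3}(\deg_T(v)-2)$, so a spanning tree retaining enough branch vertices of $H$ as branch vertices automatically has many leaves.

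I expect the main obstacle to be precisely this last leaf count, namely forcing a spanning tree of $H$ to keep a positive fraction of its $\ge 3$-degree vertices as branch vertices, and in particular pinning down the constant $\tfrac14$ (the degree-$1$ vertices of $H$ are already leaves of every spanning tree and contribute for free, so the work is concentrated on the high-degree part). I would attack it by a charging argument: grow a leaf-maximal spanning tree of $H$ and charge each vertex of degree $\ne 2$ to a nearby leaf in such a way that no leaf receives more than four charges, which yields $4\cdot(\#\text{leaves})\ge t-2$ and hence the claimed bound.
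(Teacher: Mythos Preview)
Your arguments for (i) and (ii) are exactly the paper's: the same buffered product through the centre of $K_{k,1}$ (the paper writes it with the shorthand $(u\rightsquigarrow v)=(u\to k{+}1)(k{+}1\to v)$), and the same observation that contracting the interior of a $k$-leaf tree yields $K_{k,1}$ as a minor, so Lemma~\ref{cor:l_minor} applies.

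For (iii) your plan diverges from the paper, and the gap you flag is real. The paper does not prove the leaf bound at all: it simply invokes \cite{BK12}, which states that any connected graph with $t$ vertices of degree $\neq2$ has a spanning tree with at least $\tfrac14(t-2)+2$ leaves, and then applies (ii). Your suppression-then-charge outline is a plausible route toward such a statement, but as written it is only a strategy, not a proof; in particular, pinning down the constant $\tfrac14$ (rather than some weaker fraction) is precisely the content of the Bankevich--Karpov paper, and a one-line charging sketch does not substitute for it. Note also that suppressing degree-$2$ vertices can create parallel edges and loops in $H$, so the ``spanning tree of $H$'' step needs a little care; and more importantly, the result you need after suppression---many leaves in a graph whose vertices all have degree $1$ or $\geq3$---is essentially equivalent in difficulty to the unsuppressed statement. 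So either cite \cite{BK12} as the paper does, or be prepared to reproduce a full Kleitman--West/Bankevich--Karpov style argument; the current sketch does not close the gap.
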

\begin{proof}
  (i).
  For distinct vertices $u$ and $v$ of the star graph $K_{k,1}$ such that $u,
  v\not = k + 1$, we write $(u \rightsquigarrow v) = (u \to k+1) (k+1 \to v)$.
  Then 
  $$
  (k-1\rightsquigarrow k)(k-2\rightsquigarrow
  k-1)\cdots(1\rightsquigarrow2)(k\rightsquigarrow1)\in\genset{K_{k,1}}
  $$
  has the cycle $1,2,\ldots,k-1$.  The result now follows from
  Lemma~\ref{cor:l_minor}.

 (ii).
  If $T$ is a tree with $k$ leaves, then $K_{k,1}$ is a minor of $T$, so the
  result follows from Corollary~\ref{cor:l_minor} and part (i).

  (iii).
  By \cite{BK12}, any graph with $t$ vertices of degree not equal to $2$
  contains a spanning tree with at least $\frac{1}{4}(t-2) + 2$ leaves.
\end{proof}

The next result concerns connected graphs that can be decomposed into two
connected induced subgraphs with a path connecting them.  With this in mind, we
introduce a construction based on paths. Let $L$ and $R$ be two connected
graphs on $m$ and $s$ vertices, respectively, where $m \le s$, and let~$P$ be a
path with $q$ vertices.  Let $L \oplus_q R$ denote the graph obtained by adding
an edge between an endpoint of $P$ and a vertex of $L$ of degree not equal to
1, and an edge between the other endpoint of $P$ to a vertex of $R$ of degree
not equal to 1.  Even though this definition depends on the choice of
attachment vertices, we will omit them in the notation, for our purpose is to
derive results that do not depend on them, apart from the fact that they do not
have degree 1 in $L$ and $R$. We remark that $m, s \ne 2$, since the only
connected graph on two vertices is $K_2$, whose vertices both have degree 1.
However, it is possible to have $m=1$ or $s=1$.

The vertices of $R$ are denoted as $r_1, \dots, r_s$, where they are sorted in
weakly increasing order of distance to the path $P$. In particular, $r_1$ is
attached to $P$, and $r_2$ and $r_3$ are neighbours of $r_1$ if $s\not=1$. A
similar notation is used for $L$; in particular $l_1$ is attached to $P$. Write
$L^* = L \setminus \{ l_1 \}$, $R^* = R \setminus \{ r_1 \}$, $P^* = P \cup
\{l_1, r_1\}$, and order the elements of the path as $p_1, \dots, p_q$ so that
$p_1$ is adjacent to $l_1$, and $p_q$ is adjacent to $r_1$; finally, we also
write $l_1 = p_0$ and $r_1 = p_{q+1}$.  For instance, the graph $K_{3,1}
\oplus_4 C_4$ is illustrated in Figure~\ref{fig:oplus}.

\begin{figure}[ht]
\centering
\begin{tikzpicture}[vertex/.style={circle, 
  draw, 
  fill=black,
  inner sep=0.08cm}]
	\node [vertex] (l1) at  (1,1) {};
	\node [vertex] (l2) at  (0,2) {};
	\node [vertex] (l3) at  (0,1) {};
	\node [vertex] (l4) at  (0,0) {};

	\node (ll1) at (1,1.5) {$l_1$};
	\node (ll2) at (0,2.5) {$l_2$};
	\node (ll3) at (0,1.5) {$l_3$};
	\node (ll4) at (0,0.5) {$l_4$};
	\node (pp0) at (1,0.5) {$p_0$};

	\node [vertex] (p1) at  (2,1) {};
	\node [vertex] (p2) at  (3,1) {};
	\node [vertex] (p3) at  (4,1) {};
	\node [vertex] (p4) at  (5,1) {};

	\node (pp1) at (2,0.5) {$p_1$};
	\node (pp2) at (3,0.5) {$p_2$};
	\node (pp3) at (4,0.5) {$p_3$};
	\node (pp4) at (5,0.5) {$p_4$};

	\node [vertex] (r1) at  (6,1) {};
	\node [vertex] (r2) at  (7,2) {};
	\node [vertex] (r3) at  (8,1) {};
	\node [vertex] (r4) at  (7,0) {};

	\node (rr1) at (6,1.5) {$r_1$};
	\node (rr2) at (7,2.5) {$r_2$};
	\node (rr3) at (8,1.5) {$r_4$};
	\node (rr4) at (7,0.5) {$r_3$};
	\node (pp5) at (6,0.5) {$p_5$};
	
  \edge{l1}{l2}
  \edge{l1}{l3}

  \edge{l1}{l4}

  \edge{l1}{p1}
  \edge{p1}{p2}
  \edge{p2}{p3}
  \edge{p3}{p4}
  \edge{p4}{r1}

  \edge{r1}{r2}
  \edge{r2}{r3}
  \edge{r3}{r4}
  \edge{r4}{r1}
\end{tikzpicture}
\caption{The graph $K_{3,1} \oplus_4 C_4$.} \label{fig:oplus}
\end{figure}
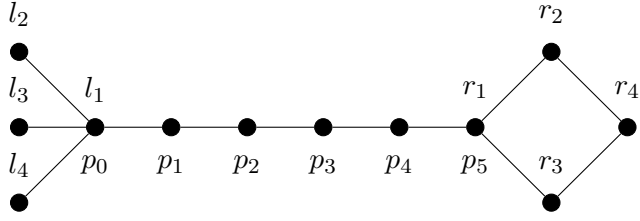

\begin{proposition}[cf. Theorem 1(5)(b) in \cite{HNP17}] \label{prop:L+R}
  With the above notation, if $q \ge s$, then 
  $$
        l(L \oplus_q R) = \begin{cases}
          1 &\text{if } m = s = 1\\
          s - 1 &\text{if } m = 1, s \ge 3\\
          m + s - 3 & \text{otherwise.}
        \end{cases}
  $$
\end{proposition}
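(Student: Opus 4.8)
The plan is to prove the asserted value of $l(L\oplus_q R)$ by establishing matching lower and upper bounds in each of the three regimes. The case $m=s=1$ is immediate: then $L\oplus_q R$ is a path on $q+2$ vertices, so it is a single strong component which is itself a path, and hence $\genset{L\oplus_q R}$ is $\H$-trivial by Proposition~\ref{prop:H-trivial}; thus $l(L\oplus_q R)=1$ by Lemma~\ref{lem:H-trivial}. For the other two regimes the lower and upper bounds need different arguments, which I would carry out separately, using throughout the two reductions already available: $l$ is unchanged on passing to the closure, and $l$ of a graph is the maximum of $l$ over its strong components, so I may work inside the single connected graph $L\oplus_q R$.

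For the lower bounds I would exhibit explicit transformations realizing cycles of the claimed lengths, in the spirit of the constructions proving Lemma~\ref{lem:l(Cn)} and Lemma~\ref{lem:degree}(i), but now exploiting the hypothesis $q\ge s$ to use the path $P$ as a long reservoir. The device is a park-and-reinsert routine. For $m=1$, fixing $W=R\setminus\{r_1\}$ (of size $s-1$), I would compose arcs transporting the vertices of $W$ one at a time, through the gateway $r_1$ (which has degree at least $2$ in $R$), into the interior of $P$; since $q\ge s>|W|$, the reservoir $P\cup\{l_1\}$ can hold all of $W$ at once, after which I reinsert them in a cyclically shifted order to obtain a single $(s-1)$-cycle. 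For $m,s\ge 3$ I would run the analogous rotation on both sides at once, streaming the vertices of $(L\setminus\{l_1\})\cup(R\setminus\{r_1\})$ through the corridor $P$---whose length $q\ge s$ keeps the two streams from colliding---so as to cyclically permute all of them but one; the single omitted vertex is forced by the need for a free position to drive the rotation, giving a cycle of length $(m-1)+(s-1)-1=m+s-3$. Where it streamlines the bookkeeping I would pass to minors via Lemma~\ref{cor:l_minor}, contracting $L$ and $R$ onto the spiders that govern the estimate, so that the core of the lower bound reduces to the single computation that a spider has $l$ equal to its number of non-central vertices minus the length of its longest leg.

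For the upper bounds I would show that every nontrivial cycle avoids a prescribed set of vertices. Starting from $\alpha$ attaining $l(L\oplus_q R)$ and passing to a power, I may assume the longest cycle is realized by an element of a maximal subgroup, so that the cycled points lie in its image and are genuinely permuted. I would then prove: (a) no interior vertex of $P$ lies on a cycle of length $\ge 2$, since $P$ is an unbranched corridor meeting the rest only at its ends and is the only long reservoir available, so its own contents can merely be shifted, never permuted; (b) the gateway vertices ($r_1$, together with $l_1$ when $m\ge 3$) cannot themselves be cycled; and (c) when $m,s\ge 3$ one further vertex is unavoidably lost where the rotations on the two sides are spliced. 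Summing the excluded vertices---the $q$ interior vertices of $P$, the gateway(s), and the splice---leaves at most $m+s-3$ cyclable vertices, and at most $s-1$ when $m=1$ (where $l_1$ is absorbed into the reservoir, there is one gateway, and there is no splice).

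The main obstacle is the upper bound, and in particular justifying that the loss is \emph{exactly} $3$ in the generic regime and $1$ when $m=1$. Step (a) is comparatively routine, being the same phenomenon that makes paths $\H$-trivial. The delicate part is quantifying the forced sacrifices at the gateways and the splice; I expect this to require an invariant of the permutation induced on the image relative to the positions of the ``holes''---a parity or rotation invariant for the token-sliding dynamics on $L\oplus_q R$---combined with a case split between $m=1$ and $m,s\ge 3$, since the number of gateways, and hence the magnitude of the loss, differs between the two. A secondary point, which I would verify is automatic from the congestion argument, is that the bound depends only on the attachment vertices having degree at least $2$, and so is independent of their precise choice, as the notation $L\oplus_q R$ presupposes.
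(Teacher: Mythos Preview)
The genuine gap is your upper bound. Steps (a) and (b) assert that interior vertices of $P$ and the gateways $l_1,r_1$ can never lie on a nontrivial cycle of any element of $\genset{G}$, but both assertions are false. In $K_1\oplus_3K_3$ (so $m=1$, $s=q=3$, and $l=2$ by the formula) the product
\[
(r_2\to r_3)(p_2\to p_3)(p_3\to r_1)(r_1\to r_2)(r_3\to r_1)(r_1\to p_3)(p_3\to p_2)
\]
has the $2$-cycle $(p_2\ r_2)$, with $p_2$ an interior vertex of $P$; and already inside the triangle on $r_1,r_2,r_3$ the product $(r_2\to r_3)(r_1\to r_2)(r_3\to r_1)$ swaps the gateway $r_1$ with $r_2$. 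So a ``count the excluded vertices'' argument cannot work: cycles of the maximal permitted length are not confined to $L^*\cup R^*$, let alone to $L^*\cup R^*$ minus specified points. The parity/rotation invariant you gesture at is not what controls the answer either.

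What the paper actually does for the upper bound is of a different character. After reducing to $K_m\oplus_qK_s$, it equips the vertex set with the natural left-to-right pre-order $\preceq$ ($L^*$ below $P^*$ below $R^*$, with $P^*$ linearly ordered), assumes a cycle $C$ of length at least $m+s-2$ (respectively $s$ when $m=1$), and studies the pairs $u\preceq v$ in $C$ for which $u\gamma\succ v\gamma$. A prefix argument shows that each such order-reversal forces, at some stage of a word for $\gamma$, all of $u^+$ to be mapped into $R^*$ or all of $v^-$ into $L^*$; a combinatorial dichotomy between ``$R$-type'' and ``$L$-type'' jump points then gives the rank estimate $\rk(\gamma)\le m+s-3$, a contradiction. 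The obstruction is a rank collapse forced by order-reversing jumps, and it never localises the cycle to particular vertices.

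Two minor points. For $m,s\ge3$ your lower-bound sketch cannot literally park all of $L^*\cup R^*$ in $P$ simultaneously, since $m+s-2$ may exceed $q$ (take $m=s=q=3$); the paper instead builds the $R$-side and $L$-side rotations separately (each using only $q\ge s\ge m$ storage) and splices them with two short corridor moves. And invoking Proposition~\ref{prop:H-trivial} for the case $m=s=1$ is circular in the paper's logical order; one should cite directly that $\genset{P_n}$ is the (aperiodic) semigroup of order-preserving maps.
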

\begin{proof}
  We write $G = L \oplus_q R$. If $m=s=1$, then $G = P_n$ where $n = q+2$, and
  hence $\genset{P_n}$ is the semigroup of order-preserving transformations
  \cite{A62}, which is aperiodic. Otherwise, we have $s \ge 3$ and we view the
  result as two matching upper and lower bounds on $l(G)$.
  \medskip

  \noindent\textbf{Lower bound.} 
  Throughout this part of the  proof, if $u, w_1, \ldots, w_t, v$ is a path in
  $G$, then we define $$(u\rightsquigarrow v)=(u\to w_1)(w_1\to
  w_2)\cdots(w_{t-1}\to w_t)(w_t\to v)\in\genset G.$$ Since this transformation
  depends on the choice of path, we will always specify the path. \medskip
  
  \noindent\textbf{Case 1: $\boldsymbol{m = 1}$.}
  We will show that there exists $\alpha \in \genset{G}$ containing the cycle
  $r_2, \dots, r_s$.  

  For each $3 \le i \le s$, we choose  $r'_i\in \{r_2, r_3\}$ such that there
  is a shortest-length path from $r_1$ to $r_i$ that avoids $r'_i$.  We also
  define
  \begin{itemize}
    \item 
      $(r_2  \rightsquigarrow p_2)$ to follow 
      the path $r_2, r_1, p_q, \ldots, p_2$,

    \item 
      $(p_{j - 1}  \rightsquigarrow r_j)$ to follow a shortest-length path
      avoiding $r_j'$ and $(r_j\rightsquigarrow p_j)$ to follow the reverse of
      such a path (but omitting the last edge) for all $3 \leq j \leq s$,

    \item
      $(r'_j \rightsquigarrow r'_{j - 1}) = (r'_j \to r_1) (r_1 \to r'_{j -
      1})$ for all $4 \leq j \leq s$, even if $r'_j=r'_{j-1}$,

    \item
      $(r_s \rightsquigarrow r'_s)$ to follow any path avoiding vertices from
      $P$.
  \end{itemize}
  It is straightforward to verify that 
  $$
    \alpha = 
    \prod_{i = 2}^{s - 1} 
    (r_i \rightsquigarrow p_i)
    \cdot 
    (r_s \rightsquigarrow r'_s) 
    \cdot 
    \prod_{j = s} ^ {4} 
    \left[ (p_{j-1} \rightsquigarrow r_j)
    (r'_j \rightsquigarrow r'_{j-1}) \right] 
    \cdot 
    (p_2 \rightsquigarrow r_3) \in\genset{G}
  $$
  contains the cycle $r_2, \dots, r_s$, as required (the second
  product is computed in descending order of the indices).  We also note that
  $l_1\alpha=l_1$.
  \medskip

  \noindent\textbf{Case 2: $\boldsymbol{m \geq 3}$.}
  As in Case 1, we may use $K_1 \oplus_q R\subseteq G$ to create
  $\alpha\in\genset{G}$ containing the cycle $r_2, \ldots, r_s$ and such that
  $l_i\alpha = l_i$ for all $i$. Similarly, we may  use $L\oplus_q
  K_1\subseteq G$ to create $\beta\in\genset{G}$ containing the cycle
  $l_2, \ldots, l_m$ and such that $r_i\beta = r_i$ for all $i$.  If 
  $(r_2 \rightsquigarrow l_3)$ and $(l_2
  \rightsquigarrow r_2)$ follow the unique shortest paths, then 
  $\gamma = \alpha \beta (r_2 \rightsquigarrow l_3) (l_2 \rightsquigarrow
  r_2)\in\genset{G}$ contains the cycle $l_3, \ldots, l_m, r_2, \ldots, r_s$.
  \medskip

\noindent\textbf{Upper bound.} 
Since $l(G_1) \le l(G_2)$ if $G_1$ is a subgraph of $G_2$, we assume without
loss of generality that $G = K_m \oplus_q K_s$. 
We define a pre-order $\preceq$ on the vertices of $G$ such that 
$a \preceq b$ if $a \in L ^*$, or $b \in R ^ *$, or $a = p_i$ and $b= p_j$ for some
$0\leq i\leq  j\leq q+1$. If $a\preceq b$ and $b\not\preceq a$, then we write
$a \prec b$ or $b\succ a$. We note that $b\not\preceq a$ implies $a\preceq b$. 
We define the sets
$$a^+ = \{b \in \nset : b \succ a \} \cup \{a\}\quad\text{and}\quad
a^- = \{b \in \nset : a \succ b \} \cup \{a\}.$$
For the remainder of the proof, we fix some $\gamma\in\genset{G}$, and we write $\gamma
= \epsilon_1 \cdots \epsilon_k$, where $\epsilon_1, \ldots, \epsilon_k$ are arcs
in $G$. We also define $\gamma_0 = \id$ and $\gamma_i = \epsilon_1 \cdots
\epsilon_i$ for all $1 \le i \le k$.
\medskip

\noindent\textbf{Case 1: $\boldsymbol{m = 1}$.}
We require the following claim.
\begin{claim} \label{claim:jumping1}
  Suppose that $m = 1$. If there are vertices $u$ and $v$ in $G$ such that
  $u\preceq v$ and $u\gamma \succ v\gamma$, then $\gamma \in \alpha \genset{G}$
  for some $\alpha \in \genset{G} \cup \{ \id \}$ such that $u^+ \alpha
  \subseteq R^*$.
\end{claim}

\begin{proof}
  If $u \in R^*$, then $u^+=\{u\}$ and so $\alpha = \id$ has the required
  properties.  Suppose that $u \notin R^*$.  Since $u \gamma \succ v \gamma$,
  there exists $i$ such that $u \gamma_i \in R ^ *$. If $j$ is the least such
  value, then $u \gamma_{j-1} = r_1$, $u^+ \gamma_{j-1} \subseteq R$, and
  $\epsilon_j = (r_1 \to r_a)$ for some $r_a \in R^*$. If we set $\alpha =
  \gamma_j$, then $u^+ \alpha = (u^+ \gamma_{j-1})\epsilon_j \subseteq R^*$.
\end{proof}

Seeking a contradiction, suppose that $\gamma$ has
a cycle of length $c$, where $c \geq s$, and let $C = \{u_1, \dots, u_c\}$
be such a cycle, where $u_1 \preceq\cdots\preceq u_c$. It is
not necessarily the case that $u_i \gamma = u_{i+1}$. Since $|C| = c > s - 1 =
|R ^ *|$, $C$ is not contained in $R^*$ and so $u_1 \prec u_i$ for all $i$. 
This gives $C\subseteq u_1^+$.
Let $j$ be such that $u_j\gamma=u_1$.  Then $u_1\prec u_j$ but $u_1\gamma\succ u_j\gamma$.
So by Claim~\ref{claim:jumping1},  $c = |C \gamma|
\le |u_1^+ \gamma| \le |R^*|=s-1$, which is the desired contradiction.  \medskip

  \noindent\textbf{Case 2: $\boldsymbol{m \geq 3}$.}
For the sake of obtaining a contradiction, suppose that $\gamma$ has a cycle of length at least $m + s - 2$. Let $C =
\{u_1, \dots, u_c\}$ be such a cycle, sorted so
that $u_1 \preceq \cdots\preceq u_c$.  Again it is not
necessarily the case that $u_i \gamma = u_{i+1}$.  We note that $u_c \notin L$
(since $c \ge m+1$) and $u_1 \notin R$ (since $c \ge s+1$), whence $u_c \succ
u_1$.  

We say that a vertex $v$ of $G$ is of
\textit{$L$-type} if there is $\beta \in \genset{G} \cup \{ \id \}$ such that
$\gamma \in \beta \genset{G}$ and $v^- \beta \subseteq L^*$. Similarly, we
say that $v$ is of \textit{$R$-type} if there is $\alpha \in \genset{G} \cup
\{ \id \}$ such that $\gamma \in \alpha \genset{G}$ and $v^+ \alpha \subseteq
R^*$. 

\begin{claim} \label{claim:jumping2}
  Suppose that $m\ge 3$.  If there are vertices $u$ and $v$ in $G$ such that
  $u\preceq v$ and $u\gamma \succ v\gamma$, then either
  $u$ is  of $R$-type, or $v$ is of $L$-type.
\end{claim}
\begin{proof} 
  As in the proof of Claim~\ref{claim:jumping1}, if $u \in R^*$, then 
  $\alpha = \id$ witnesses that $u$ is of $R$-type. Similarly, if 
  $v \in L^*$ then $\beta = \id$ shows that $v$ is of $L$-type. 
  
  Suppose that $u \notin R^*$ and $v \notin L^*$.  As before, for some $i\in
  \{1, \ldots, k\}$,  $u\gamma_i$ and $v\gamma_i$ both belong to $L^*$ or
  $R^*$.  Suppose that both $u\gamma_i$ and $v\gamma_i$ belong to $R^ *$ before
  they both belong to $L ^ *$; the case when they both first belong to $L^*$ is
  symmetric.  Let $i$ be the least value such that $u\gamma_i, v\gamma_i\in R ^
  *$.  Then for all $j < i$, $u\gamma_j \preceq v\gamma_j$ and either
  $u\gamma_j \notin R^*$ or $v\gamma_j \notin R^*$.  If $u\gamma_j \in R ^*$
  for some $j < i$, then since $u\gamma_j\preceq v\gamma_j$, it follows that
  $v\gamma_j \in R ^*$. Hence $i$ is the least value such that $u\gamma_i\in
  R ^ *$.

  We will show that $u\gamma_j \not\in L^*$ for all $0 \leq j < i$. Seeking a
  contradiction, suppose that $u\gamma_j\in L^*$ for some $0\leq j < i$, and
  let  $b = \max\{j : j < i,\ u\gamma_j \in L^*\}$. The vertices $\nset$ of $G$ can be partitioned into two parts:
  $$A = (\{u\gamma_b\} \cup
  P^* \cup R^*) \gamma_b^{-1} \qquad\text{and}\qquad B
  =  (L^* \setminus \{u\gamma_b\}) \gamma_b^{-1}.$$
  Let $x \in A$. If $x \gamma_b = u \gamma_b$, then $x \gamma_i = u \gamma_i
  \in R^*$. Otherwise, $u \gamma_b \prec x \gamma_b$.
  By maximality of $b$ and minimality of $i$, we have $\epsilon_{b+1}=(u\gamma_b\to l_1)$, $\epsilon_i=(r_1\to u\gamma_i)$, $u\gamma_j\in P^*$ for all $b<j<i$, and $u\gamma_j\preceq x\gamma_j$ for all $b<j<i$.  It follows that $x\gamma_i\in R^*$.
  Therefore, $A\gamma_i \subseteq R ^*$, and so  
  $$c \le \rk(\gamma) \le |B \gamma_b| + |A \gamma_i| \le (m-2) + (s-1) = m + s
  - 3,$$
  which contradicts the fact that $c > m+s-3$. 

  We conclude that $u\gamma_j \in P^*$ for all $j < i$, and by the argument
  concluding the proof of Claim~\ref{claim:jumping1}, we obtain $u^+\gamma_i
  \subseteq R^*$, so that $u$ is of $R$-type.
\end{proof}

\begin{claim} \label{claim:jump}
  There exist $u, v \in C$ such that $u\preceq v$, $u_c\preceq v$, and $u\gamma
  \succ v\gamma$.  There also exist $u', v' \in C$ such that $u'\preceq v'$,
  $u' \preceq u_1$, and $u'\gamma \succ v'\gamma$.  
\end{claim}

\begin{proof}
  If $u_c \in R^*$, then since $C$ intersects $R^*$ but is not contained in
  $R^*$, there exist $u, v \in C$ such that $u \notin R^*$,  $u \gamma \in
  R^*$, $v \in R^*$, and $v \gamma \notin R^*$.  Then $u$ and $v$ have the
  required properties.

  If $u_c \notin R^*$, then $u_c \succ u_i$ for all $1 \le i \le c-1$. In
  particular, if $u_c=u_j\gamma$, then $u = u_j$ and $v = u_c$ have the
  required properties.

The proof of the existence of $u'$ and $v'$ is symmetrical.
\end{proof}

We now write $X = \set{u_i \in C}{\exists j,\ u_i\preceq u_j,\ u_i\gamma \succ
u_j\gamma}$, and note that $X\not=\varnothing$ by Claim \ref{claim:jump}.
We enumerate $X = \{x_1, \dots, x_d\}$ such that $x_1 \preceq\cdots\preceq x_d$.   For each $1\leq i\leq d$,
let $y_i$ be an element of $C$ such that $x_i\preceq y_i$ and $x_i\gamma
\succ y_i\gamma$; we also assume that $y_i$ is maximal with respect to this property: that is, if $v\in C$ is such that $x_i\preceq v$ and $x_i\gamma \succ v\gamma$,
then $v \preceq y_i$.  Note that $\{y_1, \ldots, y_d\}$ is not necessarily
sorted according to the pre-order.  If $y_M$ is a maximal element of $\{y_1,
\ldots, y_d\}$ with respect to $\preceq$, then Claim~\ref{claim:jump} indicates
that $u_c \preceq y_M$.  We also have $x_1 \preceq u_1$, since $x_1\preceq u'$, where $u'$ is as in Claim \ref{claim:jump}.

\begin{claim} \label{claim:jumping3}
  There exists $1 \le a < d$ such that $y_1, \dots, y_a$ are all of $L$-type
  and $x_{a+1}, \dots, x_d$ are all of $R$-type. Moreover, for all $i > a \ge
  j$, $x_i \succ y_j$.
\end{claim}
\begin{proof} 
  We shall prove a sequence of facts about the set $X$, the last two of which give the claim.
  \medskip

  \textbf{(a).} If $x_i$ is of $R$-type, and if $x_i \preceq y_j$, then $y_j$
  is not of $L$-type.
  \medskip

  \noindent Suppose to the contrary that we have the following: $x_i \preceq
  y_j$; $\alpha, \beta \in \genset{G} \cup \{ \id \}$; $\gamma \in \alpha
  \genset{G}$ and $\gamma \in \beta \genset{G}$; $x_i^+ \alpha \subseteq R^*$
  and $y_j^- \beta \subseteq L^*$. We then have $x_i \notin L^*$, for otherwise
  $\nset = L^* \cup x_i^+$ and $$ c \le \rk(\gamma) \le |L^*
  \setminus \{ x_i \}| + |x_i^+ \alpha| \le (m-2) + (s-1) = m + s - 3, $$ a
  contradiction.  Similarly, we have $y_j \notin R^*$. Thus 
  $$ x_i, y_j \in P^*, \quad \nset = x_i^+ \cup y_j^-, \quad x_i \in
  x_i^+ \cap y_j^-.  $$ 
  Denoting $S = \{ w \in y_j^- : w \gamma = x_i \gamma \}$, we have $$ c \le
  \rk(\gamma) \le |(x_i^+ \cup S) \gamma| + |(y_j^- \setminus S) \gamma| =
  |x_i^+ \gamma| + |y_j^- \gamma| - 1 \le (s-1) + (m-1) - 1 = m + s - 3, $$ a
  contradiction.
  \medskip

  \textbf{(b).} For every $i$, $x_i$ is of $R$-type if and only if $y_i$ is not of $L$-type.
  \medskip

  \noindent Apply (a) with $i=j$ and combine with Claim~\ref{claim:jumping2}.
  \medskip

  \textbf{(c).} If $x_i$ is of $R$-type, then so too are $x_{i+1},\ldots,x_d$.
  \medskip

  \noindent If $x_i$ is of $R$-type and $i < j$, then because $x_i\preceq x_j\preceq y_j$, (a) says that $y_j$ is not of $L$-type and (b) in turn says that $x_j$ is of $R$-type.
  \medskip

  \textbf{(d).} $y_1$ is of $L$-type and $x_M$ is of $R$-type.
  \medskip

  \noindent We prove that $x_1$ is not of $R$-type, which by (b) implies that $y_1$ is of $L$-type. Suppose that $x_1^+  \alpha \subseteq R^*$ for some
  $\alpha\in\genset{G}\cup\{\id\}$ with $\gamma\in\alpha\genset{G}$. If $x_1 \in
  L^*$, then $x_1^+ = \{x_1\} \cup P^* \cup R^*$ and hence $ c \le \rk(\gamma)
  \le |L^* \setminus \{x_1\}| + |(\{x_1\} \cup P^* \cup R^*)\alpha| \le (m-2) +
  (s-1) = m + s - 3 $, a contradiction.  If $x_1 \notin L^*$, then $x_1 = u_1$
  (since $x_1 \preceq u_1$ and $u_1 \not\in R^*$) and hence $C \subseteq x_1^+$, so
  that $c=|C\gamma| \le |x_1^+ \alpha| \le s-1$, a contradiction. The proof for $x_M$ is
  symmetrical.
  \medskip

  \textbf{(e).} There exists $1 \le a < d$ such that $y_1, \dots, y_a$ are all of
  $L$-type and $x_{a+1}, \dots, x_d$ are all of $R$-type.
  \medskip

  \noindent This follows from combining (b), (c) and (d), with $a=\max\set{i}{y_1,\ldots,y_i\text{ are of $L$-type}}$.
  \medskip

  \textbf{(f).} For all $i > a \ge j$, $x_i \succ y_j$.
  \medskip

  \noindent By (e), $x_i$ is of $R$-type, and $y_j$ of $L$-type.  It follows from (a) that $x_i \succ y_j$.
\end{proof}

We now partition $C$ into two parts $A$ and $B$ defined by
$$A = \{ u \in C : x_{a+1} \succ u \} \qquad\text{and}\qquad
B = \{ v \in C : x_{a+1} \preceq v \}.$$
Note that $A$ and $B$ are both non-empty: for example, $x_{a+1}\in B$ and $y_a\in A$.
Since $C$ is a cycle of $\gamma$, $S \gamma \ne S$ for any non-empty proper 
subset $S$ of $C$. In particular, $A \gamma \ne A$ and $B \gamma \ne B$, and so 
there exist $u\in A$ and $v\in B$ such that $u \gamma \in B$ and $v \gamma \in A$.
It follows that $u\preceq v$ and $u\gamma \succ v\gamma$, and hence
$u = x_j$ and $v \preceq y_j$ for some $j \le
a$. But then $x_{a+1}\preceq v\preceq y_j$, which
contradicts Claim~\ref{claim:jumping3}.
\end{proof}

\subsection{Classification results}
In this subsection, we give a classification of the connected
graphs $G$ for which $l(G)$ is equal to $1$, $2$ or $n-1$.  From this, and in
light of equation \eqref{eq:lS1...r}, it is easy to deduce such classifications
for arbitrary graphs $G$.  We also consider the computational complexity of
determining whether a given graph $G$ satisfies $l(G)\leq k$.  

The classification of graphs with $l(G) = 1$ or $l(G) = 2$ is based on the
following family of graphs. The graph $Q_n$ for $n \ge 3$ is obtained by adding
the edge $\{n-2, n\}$ to the path $P_n$, so that the three last vertices form a
triangle (and indeed $Q_3 = K_3$). The graph $R_n$ for $n \ge 4$ is obtained by
removing the edge $\{n-1, n\}$ from $Q_n$, so that the last four vertices form
the star graph $K_{3,1}$ (and indeed $R_4 = K_{3,1}$). The graphs $Q_6$ and $R_6$ are illustrated
in Figure~\ref{fig:Q6R6}.

\begin{figure}[ht]
\centering
\begin{tikzpicture}[vertex/.style={circle, 
  draw, 
  fill=black,
  inner sep=0.08cm}]
	\node [vertex] (q1) at  (0,1) {};
	\node [vertex] (q2) at  (1,1) {};
	\node [vertex] (q3) at  (2,1) {};
	\node [vertex] (q4) at  (3,1) {};
	\node [vertex] (q5) at  (4,2) {};
	\node [vertex] (q6) at  (4,0) {};
	
	\node [vertex] (r1) at  (7,1) {};
	\node [vertex] (r2) at  (8,1) {};
	\node [vertex] (r3) at  (9,1) {};
	\node [vertex] (r4) at  (10,1) {};
	\node [vertex] (r5) at  (11,2) {};
	\node [vertex] (r6) at  (11,0) {};
	
  \edge{q1}{q2}
  \edge{q2}{q3}
  \edge{q3}{q4}
  \edge{q4}{q5}
  \edge{q5}{q6}
  \edge{q4}{q6}

  \edge{r1}{r2}
  \edge{r2}{r3}
  \edge{r3}{r4}
  \edge{r4}{r5}
  \edge{r4}{r6}
\end{tikzpicture}
\caption{The graphs $Q_6$ (left) and $R_6$ (right).} \label{fig:Q6R6}
\end{figure}
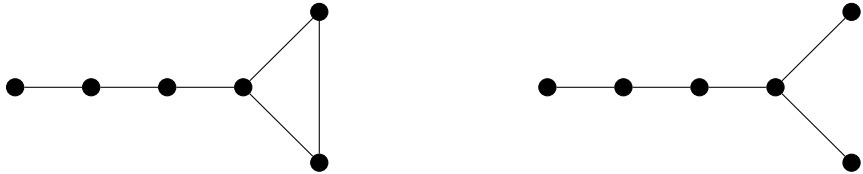

A number of other graphs, pictured in Figure~\ref{fig:bull}, will feature in
the proofs. It can be shown, using GAP~\cite{GAP4} for instance, that if $G$ is
the bull graph or the E-graph, then $l(G) = 3$ and that $l(\theta_0) = 6$.
\begin{figure}[!htp]
\centering
\begin{tikzpicture}[vertex/.style={circle, 
  draw, 
  fill=black,
  inner sep=0.08cm}]

	\node [vertex] (1) at (0,3) {};
	\node [vertex] (2) at (2,3) {};
	\node [vertex] (3) at (0,2) {};
	\node [vertex] (4) at (2,2) {};
	\node [vertex] (5) at (1,1) {};
	\node () at (3,1) {};
	\node () at (-1,1) {};

 \draw (1) -- (3);
 \draw (2) -- (4);
 \draw (3) -- (4);
 \draw (4) -- (5);
 \draw (5) -- (3);
\end{tikzpicture}
 \hspace{1 cm}
\begin{tikzpicture}[vertex/.style={circle, 
  draw, 
  fill=black,
  inner sep=0.08cm}]

	\node [vertex] (1) at (1,2) {};
	\node [vertex] (2) at (0,2) {};
	\node [vertex] (3) at (0,1) {};
	\node [vertex] (4) at (0,0) {};
	\node [vertex] (5) at (1,0) {};
	\node [vertex] (6) at (1,1) {};
	\node () at (2,1) {};
	\node () at (-1,1) {};

 \draw (1) -- (2);
 \draw (2) -- (3);
 \draw (3) -- (4);
 \draw (4) -- (5);
 \draw (3) -- (6);
\end{tikzpicture}
 \hspace{1 cm}
\begin{tikzpicture}[vertex/.style={circle, 
  draw, 
  fill=black,
  inner sep=0.08cm}]

	\node [vertex] (1) at (0:1) {};
	\node [vertex] (2) at (60:1) {};
	\node [vertex] (3) at (120:1) {};
		
	\node [vertex] (4) at (180:1) {};
	\node [vertex] (5) at (240:1) {};
	\node [vertex] (6) at (300:1) {};

	\node [vertex] (7) at (0,0) {};
	\node () at (2,1) {};
	\node () at (-2,1) {};

	\draw (1) -- (2);
	\draw (2) -- (3);
	\draw (3) -- (4);
	\draw (4) -- (5);
	\draw (5) -- (6);
	\draw (6) -- (1);
	
	\draw (1) -- (7);
	\draw (4) -- (7);
\end{tikzpicture}
\caption{The bull graph (left), E-graph (centre) and $\theta_0$ graph (right).} 
\label{fig:bull}
\end{figure}
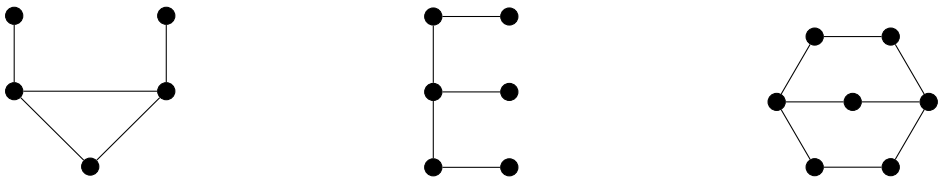

\begin{proposition} \label{prop:l=1}
  Let $G$ be a connected graph. Then $l(G) = 1$ if and only if
  $G$ is a path.
\end{proposition}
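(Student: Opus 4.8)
The plan is to prove the two implications separately, where the direction ``$G$ a path $\Rightarrow l(G)=1$'' is essentially already in hand from the preliminary results, and the converse is handled by contraposition together with a clean structural dichotomy for connected graphs.

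For the forward direction, if $G$ is a path $P_n$, then $\genset{G}$ is the semigroup of order-preserving transformations~\cite{A62}, which is aperiodic (as already noted in the proof of Proposition~\ref{prop:L+R}). By the equivalence of conditions~(i) and~(ii) in Lemma~\ref{lem:H-trivial}, aperiodicity of $\genset{G}$ is equivalent to $l(G)=1$, and hence $l(G)=1$ in this case.

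For the converse I would argue by contrapositive: assuming $G$ is connected but not a path, I would show $l(G)\ge 2$. The key observation is that a connected graph all of whose vertices have degree at most $2$ is either a path or a cycle graph $C_n$; this gives the dichotomy that drives the argument. If $G$ has a vertex of degree $k\ge 3$, then Lemma~\ref{lem:degree}(i) immediately yields $l(G)\ge k-1\ge 2$. Otherwise every vertex of $G$ has degree at most $2$, so, being connected and not a path, $G$ must be a cycle graph $C_n$ with $n\ge 3$, and then Lemma~\ref{lem:l(Cn)} gives $l(G)=n-1\ge 2$. In either case $l(G)\ge 2$, so $l(G)\ne 1$, which is the contrapositive of the desired implication.

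The only point requiring any care---the closest thing to an obstacle---is justifying the structural dichotomy, namely that a connected graph with maximum degree at most $2$ which is not a path must be a cycle. This is a standard fact (a connected graph with maximum degree at most $2$ is a path or a cycle), but I would state it explicitly: a vertex of degree $0$ forces $G=P_1$, and in the absence of vertices of degree $\ge 3$ the only way to fail to be a path is to contain a cycle, which under the degree-$\le 2$ constraint forces $G=C_n$. Once this dichotomy is in place, everything else reduces to invoking the already-established Lemmas~\ref{lem:degree} and~\ref{lem:l(Cn)}, so the proof is short.
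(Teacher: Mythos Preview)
Your proof is correct and follows essentially the same approach as the paper: the forward direction is identical, and for the converse both arguments reduce to Lemma~\ref{lem:degree}(i) (ruling out vertices of degree $\ge3$) and Lemma~\ref{lem:l(Cn)} (ruling out cycles). The only cosmetic difference is that the paper phrases the conclusion as ``$G$ is a tree with maximum degree $2$'' and cites Lemma~\ref{cor:l_minor} in passing, whereas you invoke the path/cycle dichotomy for connected graphs of maximum degree $\le2$ directly, which makes the appeal to the minor lemma unnecessary.
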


\begin{proof}
  Since $\genset{P_n}$ is the semigroup of order-preserving transformations of
  $\nset$, it is aperiodic. Conversely, suppose that $l(G) = 1$. By
  Lemmas~\ref{lem:l(Cn)},~\ref{cor:l_minor} and~\ref{lem:degree}, $G$
  is a tree with maximum degree $2$, in other words, $G$ is a path.
\end{proof}

Proposition~\ref{prop:H-trivial} easily follows from Proposition~\ref{prop:l=1}
and equation \eqref{eq:lS1...r}.

\begin{proposition} \label{prop:l=2}
  Let $G$ be a connected graph. Then $l(G) = 2$ if and only if $G$
  is $Q_n$~$(n\geq3)$ or $R_n$ $(n\geq4)$.
\end{proposition}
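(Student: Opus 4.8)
The plan is to prove both implications, treating $l(G)=2$ as a strong structural constraint that whittles a connected $G$ down to exactly $Q_n$ or $R_n$. Throughout, the key quantitative tools are Lemma~\ref{lem:degree} (degree and leaf counts force cycles), Lemma~\ref{lem:l(Cn)} together with subgraph-monotonicity ($l(G')\le l(G)$ for subgraphs), and Proposition~\ref{prop:L+R}.

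For the ``if'' direction I would first get the lower bound $l(G)\ge 2$: both $Q_n$ and $R_n$ ($n\ge 4$) contain a vertex of degree $3$, so Lemma~\ref{lem:degree}(i) gives $l\ge 2$, while $Q_3=K_3=C_3$ has $l=2$ by Lemma~\ref{lem:l(Cn)}. For the matching upper bound, observe that $Q_n=K_1\oplus_{n-4}K_3$ and $R_n=K_1\oplus_{n-4}P_3$, where in each case $s=3$ and the path attaches to a vertex of degree $\ne 1$ (every vertex of $K_3$, and the middle vertex of $P_3$). Hence for $n\ge 7$, where $q=n-4\ge s$, Proposition~\ref{prop:L+R} yields $l=s-1=2$ directly. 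For the remaining small values of $n$ I would use monotonicity: $Q_n$ is an induced subgraph of $Q_7$ and $R_n$ of $R_7$ (delete the far end of the pendant path), so $l(Q_n)\le l(Q_7)=2$ and $l(R_n)\le l(R_7)=2$.

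For the ``only if'' direction, suppose $G$ is connected with $l(G)=2$. Three quick observations constrain $G$: by Lemma~\ref{lem:degree}(i) its maximum degree is at most $3$; by Lemma~\ref{lem:l(Cn)} and subgraph-monotonicity every cycle has length $3$, since a cycle of length $r\ge 4$ gives $l\ge l(C_r)=r-1\ge 3$; and by Proposition~\ref{prop:l=1}, $G$ is not a path. The recurring device is that a subtree with $k$ leaves forces $l\ge k-1$ (Lemma~\ref{lem:degree}(ii)); in particular, since $\sum_v(\deg v-2)=-2$ in a tree, a subtree with two degree-$3$ vertices has at least $4$ leaves and hence forces $l\ge 3$, a contradiction. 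If $G$ has no triangle it is a tree of maximum degree $\le 3$ that is not a path, and a leaf-count shows it is a spider with one degree-$3$ centre and three legs of lengths $a\le b\le c$; were $b\ge 2$, then $G$ would contain the E-graph (the spider with legs $1,2,2$) as a subgraph, forcing $l\ge 3$, so $a=b=1$ and $G=R_{c+3}$. If $G$ has a triangle, then triangles are pairwise vertex-disjoint (a shared vertex violates the degree bound, a shared edge creates a $C_4$), and two disjoint triangles joined by a shortest path would, after deleting one edge from each, expose a subtree with two degree-$3$ vertices and hence $l\ge 3$; so the triangle $\{x,y,z\}$ is unique. At most one of $x,y,z$ can carry an external edge, for two external edges produce the bull graph as a subgraph (the two external neighbours are distinct, else a second triangle appears), forcing $l\ge 3$. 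If none do, $G=K_3=Q_3$; otherwise all external structure hangs off a single vertex $z$ as a tree, and if that tree had a degree-$3$ vertex $w$ then deleting $\{x,y\}$ would expose a tree with degree-$3$ vertices $z$ and $w$, again forcing $l\ge 3$. Hence the external part is a pendant path and $G=Q_n$.

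The main obstacle is that the available lemmas bound $l$ from below only through leaf- and degree-counts, and these are too weak precisely for the two boundary configurations: the spider with legs $1,2,2$ and the bull graph each have only three leaves and admit no useful cycle minor, so the leaf trick yields merely $l\ge 2$. Excluding these therefore genuinely relies on the computational facts $l(\text{E-graph})=l(\text{bull})=3$ quoted before the statement. A secondary care point is the hypothesis $q\ge s$ in Proposition~\ref{prop:L+R}, which is what forces the separate, monotonicity-based treatment of the small cases in the ``if'' direction.
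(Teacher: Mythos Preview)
Your proposal is correct and follows essentially the same route as the paper: both directions rest on Lemma~\ref{lem:degree}, Lemma~\ref{lem:l(Cn)}, Proposition~\ref{prop:L+R}, and the computed facts $l(\text{bull})=l(\text{E-graph})=3$.

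Two minor differences are worth flagging. In the ``if'' direction, the paper verifies the small cases $n\le 6$ by GAP, whereas you observe that $Q_n$ (resp.\ $R_n$) is a subgraph of $Q_7$ (resp.\ $R_7$) and invoke monotonicity to get $l\le 2$; combined with the degree lower bound this is a clean way to avoid that computation. In the ``only if'' direction the paper first establishes a single claim (``$G$ has exactly one vertex of degree $3$''), using a contraction argument for the non-adjacent case, and only afterwards splits into tree/non-tree; you instead split immediately into the no-triangle and triangle cases and handle degree-$3$ uniqueness inside each branch by exhibiting explicit subtrees with four leaves. The two organisations are equivalent in strength and length, and both ultimately need the bull and E-graph computations at exactly the points you identify.
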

\begin{proof}
  Note that $Q_n=K_1\oplus_{n-4}K_3$ and $R_n=K_1\oplus_{n-4}K_{2,1}$.  It
  follows from Proposition~\ref{prop:L+R} that $l(Q_n) =  l(R_n) = 2$ for
  $n\geq7$; this can also be verified for $n\leq6$, using GAP~\cite{GAP4}. This
  part of the proof also follows from Lemma 15 in~\cite{HNP17}.

  Conversely, suppose that $l(G) = 2$; the case $n \le 3$ is easy so let us
  assume $n \ge 4$. By Lemmas~\ref{lem:l(Cn)} and \ref{cor:l_minor},~$G$ does
  not have any cycle of length $4$ or more.  By Lemma~\ref{lem:degree}, $G$ has
  no vertices of degree greater than~$3$.

  \begin{claim}
    $G$ has exactly one vertex of degree $3$.
  \end{claim}

  \begin{proof}
    If $G$ has no vertex of degree $3$, then it is a path and $l(G) = 1$, or it
    is a cycle and $l(G) = n-1 \ge 3$. Thus, $G$ has a vertex of degree 3,
    say $x_1$, with neighbours $x_2$, $x_3$ and~$x_4$. First, suppose that $x_2$ also has
    degree 3. If $x_3$ and $x_4$ are both neighbours of $x_2$, then $G$ has the
    cycle $x_1,x_3,x_2,x_4$. If $x_2$ is adjacent to $x_3$ and to another vertex, say $x_5$,
    then $G$ contains a bull. Thus, $x_2$ is not adjacent to either $x_3$ or $x_4$,
    and instead is adjacent to $x_5$ and $x_6$, say, in which case, $G$ contains a tree
    with leaves $x_3$, $x_4$, $x_5$, and $x_6$, so Lemma~\ref{lem:degree}(ii) applies. 
    So vertex $x_2$ does not have degree $x_3$ and, similarly, neither do vertices $x_3$ and $x_4$.
    Second, suppose that $G$ contains another vertex of degree $3$, say $u$,
    that is not a neighbour of $x_1$.  There is a path from $u$ to $x_1$, and we
    may assume this goes through vertex $x_2$; then by contracting the path from
    $x_2$ to $u$, and applying Lemma \ref{cor:l_minor}, we get back to the first case.  
  \end{proof}

  We now split the rest of the proof into two cases. First, if $G$ is a tree,
  then $G = R_n$ or $G$ has the E-graph as a subgraph. The latter case would
  yield $l(G) \ge 3$, hence $G = R_n$. Second, if $G$ is not a tree, then $G$
  has a triangle, say induced by the vertices $a,b,c$. One of them must
  be the vertex of degree 3, say $a$, and the other two have degree $2$.
  Then $G = Q_n$.
\end{proof}

On the other extreme, we have the following classification. Recall that a
graph $G$ is \textit{non-separable} if for every pair of vertices $u, v \in
\nset$, there are at least two vertex-disjoint paths from $u$ to
$v$. 

\begin{proposition}\label{prop:l=n-1}
  Let $G$ be a connected graph. Then $l(G) = n-1$ if and only if
  $G = K_2$ or $G$ is non-separable and not odd bipartite. 
\end{proposition}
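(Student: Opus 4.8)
The plan is to reduce the statement to the classical theory of the token-sliding (``15-'') puzzle on a graph, and then apply Wilson's theorem. By Proposition~\ref{prop-direct-product} and equation~\eqref{eq:lS1...r} I may assume that $G$ is connected and (since $\genset{D}=\genset{\bar D}$) closed, and since $l(K_2)=1=n-1$ I may assume $n\ge 3$ and $G\ne K_2$. I will use throughout that $l(G)\le n-1$ for any graph on $n$ vertices, as in the proof of Lemma~\ref{lem:l(Cn)}.

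The crux is the following correspondence. Suppose $\alpha=\epsilon_1\cdots\epsilon_k\in\genset{G}$ has a cycle $C=\{a_0,\dots,a_{n-2}\}$ of length $n-1$, where each $\epsilon_i$ is an arc of $G$. Since $\alpha$ is injective on $C$, no two points of $C$ can be identified by any prefix $\gamma_i=\epsilon_1\cdots\epsilon_i$; hence at every stage the tokens starting at the points of $C$ occupy $n-1$ distinct vertices, leaving exactly one vertex empty, and each arc $\epsilon_i$ slides one token along an edge of $G$ into the currently empty vertex. Thus $\alpha$ realises a sequence of moves of the sliding puzzle on $G$ whose net effect on the tokens is the $(n-1)$-cycle $(a_0\,a_1\,\cdots\,a_{n-2})$. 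Conversely, given any sequence of slides on $G$ realising an $(n-1)$-cycle of the tokens, I first apply a single arc to collapse two vertices (creating the empty cell) and then mimic the slides by the corresponding arcs, obtaining an element of $\genset{G}$ with a cycle of length $n-1$. I would record this as a lemma: for $2$-connected $G$, one has $l(G)=n-1$ if and only if the puzzle group of $G$ contains an $(n-1)$-cycle.

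Granting the correspondence, I would invoke Wilson's theorem on the puzzle group $\mathrm{puz}(G)$ of a $2$-connected graph $G$ on $n$ vertices: if $G$ is a cycle then $\mathrm{puz}(G)$ is cyclic, generated by an $(n-1)$-cycle; if $G=\theta_0$ then $n=7$ and $\mathrm{puz}(G)$ is the exceptional group, which contains a $6$-cycle (consistent with the stated value $l(\theta_0)=6$); and otherwise $\mathrm{puz}(G)=\Sym_{n-1}$ if $G$ is not bipartite, and $\mathrm{puz}(G)=\Alt_{n-1}$ if $G$ is bipartite. An $(n-1)$-cycle has sign $(-1)^{n}$, so it lies in $\Alt_{n-1}$ exactly when $n$ is even. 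Reading off the cases: the cyclic, symmetric, and exceptional groups always contain an $(n-1)$-cycle, and a cycle graph or $\theta_0$ is never odd bipartite; in the bipartite case $\mathrm{puz}(G)=\Alt_{n-1}$ contains an $(n-1)$-cycle if and only if $n$ is even, that is, if and only if $G$ is not odd bipartite. This settles the proposition for $2$-connected $G$.

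It remains to treat separable $G$, for which I would show $l(G)\le n-2$. Here a cut vertex $v$ acts as a bottleneck: writing $G-v=S_1\sqcup S_2$, any slide across the cut passes a single token through $v$, and I would argue that this prevents the tokens from being arranged in one $(n-1)$-cycle, so that the puzzle group is intransitive and in particular contains no $(n-1)$-cycle; by the correspondence $l(G)\le n-2$. Together with the degenerate case $G=K_2$ (which is not $2$-connected in the two-disjoint-paths sense, yet satisfies $l(K_2)=1=n-1$), this completes the classification. I expect the main obstacle to be the correspondence lemma and the separable bound: one must verify carefully both that the $n-1$ cyclic points stay pairwise distinct throughout \emph{every} factorisation, so that the single-empty-cell picture is valid in each direction, and that a cut vertex genuinely obstructs a single $(n-1)$-cycle; matching the exceptional graph $\theta_0$ and the parity in the bipartite case is then routine once Wilson's theorem is available.
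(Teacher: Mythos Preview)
Your correspondence lemma and the treatment of the non-separable case via Wilson's theorem are essentially the paper's own argument: the paper also translates an $(n-1)$-cycle of $\beta\in\genset{G}$ into an element of the puzzle group $\Gamma_G(v)$ and invokes \cite{W74}, then reads off the parity obstruction in the odd bipartite case and handles cycles and $\theta_0$ separately. One small imprecision in your write-up: not every arc $\epsilon_i$ in a factorisation moves a $C$-token; if the source of $\epsilon_i$ is the current empty vertex, the arc leaves all $C$-tokens fixed. These ``idle'' steps are simply discarded when passing to the puzzle, and the remaining steps are genuine slides; the correspondence then goes through exactly as you describe.

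Where you genuinely diverge from the paper is the separable case. The paper does \emph{not} argue via intransitivity of the puzzle group; instead it embeds a separable $G$ as a minor of $K_m\oplus_nK_s$ and appeals to Proposition~\ref{prop:L+R} (together with Lemma~\ref{cor:l_minor}) to bound $l(G)\le n-2$. Your proposed route works too, and is arguably more direct for this proposition alone, but the intransitivity claim needs an actual proof rather than an appeal to the bottleneck picture. Here is the missing invariant: if $c$ is a cut vertex and $B$ is the union of the components of $G-c$ not containing the initial hole $v$, then whenever the hole lies outside $B$ the set of tokens occupying $B$ equals the original $B$-tokens. Indeed, the hole can only enter $B$ from $c$ by sliding some token $T$ out of $B$ onto $c$; while the hole remains in $B$, no move can touch any vertex of $(A\cup\{c\})\setminus\{c\}$ except the move that slides the token at $c$ back into $B$, which returns exactly $T$ and sends the hole to $c$. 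Hence $\Gamma_G(v)$ preserves $B$ setwise; since $\varnothing\ne B\subsetneq[n]\setminus\{v\}$, the group is intransitive and contains no $(n-1)$-cycle, giving $l(G)\le n-2$. The trade-off is that the paper's route, though heavier, extracts the exact value of $l(L\oplus_qR)$, which is reused elsewhere (e.g.\ Theorem~\ref{th:l_linear_time} and the remark after Proposition~\ref{prop:l=n-1}); your argument gives only the upper bound $n-2$ needed here.
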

\begin{proof}
  The case $n \le 3$ being easily checked, we assume $n \ge 4$ throughout the
  proof.

  Let $G$ be non-separable. Recall the puzzle group $\Gamma_G(v)$ from
  \cite{W74}, obtained as follows. First of all, create a hole at any vertex
  $v$. Then repeatedly slide a vertex $a$ into the hole at vertex $b$, where
  $a$ is adjacent to $b$; this moves the hole to $a$. Whenever the hole goes
  back to $v$, this yields a permutation of $\nset \setminus \{v\}$. The
  (abstract) group does not actually depend on $v$. Clearly, creating the hole
  at $v$ can be done by using any arc $(v \to u)$ where $u$ is a neighbour of
  $v$, and then sliding a vertex $a$ to the hole in $b$ is equivalent to using
  the arc $(a \to b)$. Therefore, for any initial hole $v$ and any $g \in
  \Gamma_G(v)$ acting on $\nset \setminus \{v\}$, there exists
  $\alpha \in \genset{G}$ such that $u g = u \alpha$ for all $u \ne v$. 

  ($\Leftarrow$) We have already noted that $l(\theta_0)=6$, and that
  $l(C_n)=n-1$.  Let $G$ be non-separable and neither a cycle nor the graph
  $\theta_0$. According to \cite[Theorem~2]{W74}, $\Gamma_G(v) = \Alt_{n-1}$ if
  $G$ is bipartite and $\Gamma_G(v) = \Sym_{n-1}$ otherwise. Therefore $l(G) =
  n-1$ if $G$ is non-separable and not odd bipartite, or $l(G) \ge n-2$ if $G$
  is non-separable and odd bipartite.

  ($\Rightarrow$) We prove the contrapositive.
  Suppose first that $G$ is non-separable and odd bipartite.  To obtain a
  contradiction, suppose that $\beta \in \genset{G}$ has $l(\beta) = n-1$. Due
  to the form of $\beta$, there exist $u$ and $v$ such that $u \beta = v
  \beta$, $u \beta^{-1} \ne \varnothing$ and $v\beta^{-1} = \varnothing$ (note
  that $\beta$ acts as a cyclic permutation $\pi$ on $\nset
  \setminus \{v\}$). Since $(v \to u) \beta = \beta$, we can assume that the
  first arc in any word expressing $\beta$ is $(v \to u)$. This corresponds to
  creating a hole in $v$, and then expressing $\pi$ as a member of
  $\Gamma_G(v)$, which is impossible since $\pi$ is an odd permutation while
  $\Gamma_G(v) = \Alt_{n-1}$.

  Now suppose that $G$ is separable.  So there exist $L, R \subseteq \nset$ and $v\in\nset$ such that $2 \le |L| \le |R|$, $L
  \cap R = \{v\}$, and for any edge $\{l,r\}$ of $G$ with $l \in L$ and $r \in
  R$ we have $v \in \{l,r\}$; see for example \cite[Theorems 5.1 and 5.2]{BM08}. Then $G$ is a minor of $L \oplus_n R$, which is
  itself a minor of $K_m\oplus_nK_s$, where $m=|L|$ and $s=|R|$. By 
  Lemma~\ref{cor:l_minor} and Proposition~\ref{prop:L+R},
  $$
  l(G)\leq l(K_m\oplus_nK_s) = \begin{cases}
    1 &\text{if $m=s=2$}\\
    s-1 &\text{if $m=2<s$}\\
    m+s-3 &\text{if $m,s>2$.}
  \end{cases}
  $$
  (Note that when $m=2$, for example, we have
  $K_2\oplus_nK_s=K_1\oplus_{n+1}K_s$.)  Thus, in all cases, $l(G)\leq n-2$.
\end{proof}

We remark that the proof of Proposition~\ref{prop:l=n-1} (in conjunction with
Proposition~\ref{prop:l=n-1} itself) indicates that if $G$ is non-separable and
odd bipartite, then $l(G) = n-2$.  

A classification of graphs $G$ such that
$l(G) \le k$ for arbitrary $k$ seems beyond reach at the moment. However, since
these graphs form a minor-closed class, we can determine whether $l(G) \le k$
in time $O(n^2)$ \cite{KKR12}. We show that in fact this can be done in linear
time.

\begin{theorem} \label{th:l_linear_time}
  For any fixed $k$, deciding whether a connected graph $G$,
  given as an adjacency list, satisfies $l(G) \le k$ can be done in $O(n)$
  time.
\end{theorem}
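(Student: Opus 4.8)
The plan is to turn the bound $l(G)\le k$ into strong structural constraints on $G$, and then reduce the instance to a graph whose size depends only on $k$, on which $l$ can be computed by brute force. The two starting points are both furnished by Lemma~\ref{lem:degree}: if $l(G)\le k$ then part~(i) forces every vertex to have degree at most $k+1$, and part~(iii) forces the number $t$ of vertices of degree $\ne 2$ to satisfy $\tfrac14(t-2)+1\le l(G)\le k$, i.e.\ $t\le 4k-2$. Both tests can be run within the budget. Scanning each vertex's adjacency list but aborting as soon as $k+2$ neighbours have been seen costs $O((k+1)n)=O(n)$; if some vertex exceeds this, Lemma~\ref{lem:degree}(i) gives $l(G)\ge k+1$ and we report $l(G)>k$. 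Once maximum degree $\le k+1$ is certified we have in fact read the entire adjacency list in $O(n)$ time and we know $m\le (k+1)n/2=O(n)$, so every subsequent linear-in-$(n+m)$ graph traversal is also $O(n)$. We then count $t$; if $t\ge 4k-1$ then $l(G)\ge\tfrac14(4k-3)+1>k$ by Lemma~\ref{lem:degree}(iii), and we report $l(G)>k$.

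From now on $t\le 4k-2$ and all degrees are $\le k+1$. Next I would compute the blocks (maximal non-separable subgraphs) of $G$ in $O(n+m)=O(n)$ time by a standard depth-first search. Every block with at least $3$ vertices is non-separable, so by Proposition~\ref{prop:l=n-1} such a block $B$ has $l(B)\ge |V(B)|-2$; since $B$ is a subgraph, $l(G)\ge l(B)$, and hence if any block has $\ge k+3$ vertices we report $l(G)>k$. (This step also disposes of the extreme case where $G$ is a single long cycle $C_N$, which is one large block and has $t=0$.) After this check, every block has at most $k+2$ vertices. The only remaining way for $G$ to be large is through long \emph{bridge-chains}: maximal paths of degree-$2$ vertices all of whose edges are bridges. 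Indeed a maximal degree-$2$ path lies entirely in a single block, so if it is long it either sits in a $2$-connected block (impossible now, as those are small) or is a bridge-chain. Finally, the number of blocks is bounded in terms of $k$: suppressing all degree-$2$ vertices yields a multigraph on $t\le 4k-2$ branch vertices whose degrees are unchanged, hence with $O(k^2)$ edges and $O(k^2)$ blocks, and these lift to the blocks of $G$.

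The reduction is then to cap bridge-chain lengths. Let $\Lambda=\Lambda(k)$ be a suitable constant (at least the maximum possible block size $k+2$), and let $G^*$ be obtained from $G$ by replacing every bridge-chain of length greater than $\Lambda$ by one of length $\Lambda$. Since each block of $G^*$ has $\le k+2$ vertices, there are $O(k^2)$ blocks, and each capped chain has $\le\Lambda$ vertices, $G^*$ has $O(1)$ vertices, so $l(G^*)$ can be found by enumerating $\genset{G^*}\le\Tran_{|V(G^*)|}$ in $O(1)$ time; we answer ``$l(G)\le k$'' iff $l(G^*)\le k$. Correctness in one direction is immediate from the minor-monotonicity of $l$: shortening chains is a sequence of edge contractions, so $G^*$ is a minor of $G$ and $l(G^*)\le l(G)$ by Lemma~\ref{cor:l_minor}; thus $l(G)\le k\Rightarrow l(G^*)\le k$.

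The converse direction is the crux, and I expect it to be the main obstacle. What is needed is the following lemma: if $\Lambda$ is chosen large enough (in terms of $k$), then shortening a bridge-chain of length $>\Lambda$ down to length $\Lambda$ leaves $l(G)$ unchanged, so in fact $l(G^*)=l(G)$ and hence $l(G^*)\le k\Rightarrow l(G)\le k$. This is precisely the phenomenon exhibited by Proposition~\ref{prop:L+R}, where $l(L\oplus_q R)$ is independent of the connecting-path length $q$ once $q\ge s$; the work here is to upgrade that two-block statement to the assertion that lengthening or shortening \emph{any} sufficiently long bridge-chain in an arbitrary graph does not change $l$. I would prove this in the spirit of the upper-bound argument of Proposition~\ref{prop:L+R}: a cycle of a transformation $\gamma\in\genset{G}$ cannot gain length from the extra degree-$2$ vertices on a bridge, since such a vertex is a cut vertex and the pre-order/``jumping'' analysis used there shows that points on the chain can be absorbed without enlarging the longest cycle. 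Granting this lemma, $l(G^*)=l(G)$, and combining all steps gives an $O(n)$ decision procedure.
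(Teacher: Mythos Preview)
Your pruning phase---bounding the maximum degree by $k+1$, bounding the number $t$ of non-degree-$2$ vertices by $4k-2$, and bounding the size of every non-separable block by $k+2$---is exactly what the paper does (its Steps~3--5), and your complexity accounting for those steps is correct.

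The gap is in the final reduction. Your plan is to cap \emph{every} bridge-chain to some length $\Lambda=\Lambda(k)$ and then brute-force the resulting $O(1)$-size graph $G^*$; correctness hinges on a stabilisation lemma asserting that shortening a bridge-chain of length greater than $\Lambda$ down to $\Lambda$ does not change $l$. You propose to derive this from Proposition~\ref{prop:L+R}, but that proposition only says $l(L\oplus_qR)$ is independent of $q$ once $q\ge s=|R|$. When you pick one bridge-chain and write $G=L\oplus_qR$, the side $R$ may still contain other long bridge-chains, so $s$ need not be bounded in terms of $k$; no choice of $\Lambda$ depending only on $k$ puts you in the range $q\ge s$ where Proposition~\ref{prop:L+R} applies. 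Proving your lemma therefore genuinely requires strengthening the upper-bound argument of Proposition~\ref{prop:L+R}, not just invoking it, and you have not done so.

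The paper sidesteps this entirely by using \emph{only the single longest} branch $P$ (its Step~6). After Steps~1--5 one has $q=|P|\ge k+2$. Writing $G=L\oplus_qR$ along $P$, either the complement is small ($m+s\le k+3$, so in particular $q\ge s$ and Proposition~\ref{prop:L+R} gives $l(G)=m+s-3\le k$ or $l(G)=s-1\le k$), or the complement is large, in which case a bounded-size subgraph $G'\subseteq G$ of the form $K_1\oplus_qR'$ or $L'\oplus_qR'$ with $q\ge|R'|$ already has $l(G')=k+1$, and one returns \textit{No}. Thus a single application of Proposition~\ref{prop:L+R} (in its stated range) suffices, and no general chain-capping lemma is needed.
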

\begin{proof}
  Let us refer to a maximal path in $G$ consisting of vertices of degree $2$ as
  a branch. If a branch does not belong to a non-separable block, then $G = L
  \oplus_q R$, where the branch is the path in the middle. We say that a branch
  is terminal if $L = K_1$ and non-terminal otherwise. We shall use the same
  notation as for Proposition~\ref{prop:L+R}.

  The result is clear for $k=1$ (Proposition~\ref{prop:l=1}), so suppose $k \ge
  2$. The algorithm goes as follows.
  \begin{enumerate}

    \item
      \label{it:1} If $n \le (k+2)(k+1) (2k-1)$, solve by brute force, i.e.\  by
      enumerating all elements of $\genset{G}$.

    \item
      \label{it:2} If $G$ is a path, then return \textit{Yes}.

    \item
      \label{it:3} If $G$ has a vertex of degree at least $k+2$, then return
      \textit{No} (Lemma~\ref{lem:degree}).

    \item
      \label{it:4} If $G$ has at least $4k-1$ vertices of degree not $2$, then
      return \textit{No} (Lemma~\ref{lem:degree}(iii)).

    \item
      \label{it:5} If $G$ has a non-separable block of size at least $k+3$,
      then
      return \textit{No} (Proposition~\ref{prop:l=n-1} and the remark after its
      proof).

    \item
      \label{it:6} Let $P$ be the longest branch of $G$. If $P$ is terminal and
      has length at most $n - k - 3$, or if $P$ is non-terminal and has length
      at most $n - k - 4$, then return \textit{No}. Otherwise, return \textit{Yes}.
  \end{enumerate}

  If the first five properties are not satisfied, then the number of vertices
  of degree $2$ is at least 
  $$
        n - t \ge  (k+2)(k+1) (2k-1) + 1 - (4k-2) > (k+1)^2 (2k-1).
  $$
  On the other hand, the number of branches is at most $(k+1)t/2 \le
  (k+1)(2k-1)$. Thus the longest branch $P$ of $G$ has length $q \ge k + 2$. 

  First, suppose that $P$ is terminal, i.e.\  $G = L \oplus_q R$ with $m = 1$ and
  $s \ge 3$. If $q \le n- k - 3$, then $s - 1 = n - q - 2 \ge k+1$ and $l(G)
  \ge l(G') = k + 1$, where $G'$ is the subgraph of $G$ induced by $L \cup P
  \cup \{r_1, \dots, r_{k+2}\}$. Otherwise, $s \le k+1$ hence $q \ge s$ and
  $l(G) = s - 1 \le k$.

  Second, suppose that $P$ is non-terminal: i.e.\  $G = L \oplus_q R$ with $s \ge
  m \ge 3$. If $q \le n- k - 4$, then $m + s - 3 = n - q - 3 \ge k+1$. Let 
$$
    \mu = \min \left\{ m, \left\lfloor \frac{k+4}{2} \right\rfloor \right\}
    \qquad\text{and}\qquad
        \sigma = k + 4 - \mu.
$$
  We then have 
  $$
        \sigma + \mu - 3 = k + 1, \quad q \ge k + 2 \ge \sigma \ge \mu \ge 3,
        \quad m \ge \mu, \quad s \ge \sigma
  $$
  and $l(G) \ge l(G') = k + 1$, where $G'$ is the subgraph of $G$ induced by
  $\{l_1, \dots, l_\mu\} \cup P \cup \{r_1, \dots, r_\sigma\}$. Otherwise, $m +
  s - 3 \le k$ hence $q \ge s$ and $l(G) = m + s - 3 \le k$.

  Step~\ref{it:1} runs in $O(1)$ time; properties~\ref{it:2} to~\ref{it:4} are
  decidable in time $O(n)$. If the first four properties are not satisfied, the
  number $m$ of edges of $G$ is at most $\frac{1}{2}(k+1)t + n-t \le 2n$. Then
  the following steps, which run in $O(n + m)$ (an algorithm to find the
  non-separable blocks in linear time is given in \cite{HT73}), actually run in
  $O(n)$ time.
\end{proof}

\section{Properties related to Green's relations} \label{sec:Green}

In this section we characterise some semigroup theoretic properties of
$\genset{D}$ in terms of certain digraph theoretic properties of $D$.  In
Proposition~\ref{prop:H-trivial}, we classified the digraphs $D$ for which
$\genset{D}$ is $\H$-trivial.  The purpose of this section is to give analogous
classifications for Green's $\R$-, $\LL$- and $\J$-relations in
Propositions~\ref{prop-r-trivial},~\ref{prop-l-trivial}
and~\ref{prop-j-trivial}, respectively.

The proof of \cite[Proposition 2.4.2]{H95} gives the following.

\begin{lemma}\label{lem:RL_restriction}
  Let $T$ be a subsemigroup of a semigroup $S$, let $a,b\in T$, and suppose
  $a,b$ are regular in~$T$.  Then the following hold:
  \begin{enumerate}[\rm (i)]
    \item 
      $a, b$ are $\R$-related in $T$ if and only if they are $\R$-related
      in $S$;
    \item 
      $a, b$ are $\LL$-related in $T$ if and only if they are $\LL$-related
      in $S$.
  \qed
  \end{enumerate}
\end{lemma}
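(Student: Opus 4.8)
The plan is to prove Lemma~\ref{lem:RL_restriction} by reducing the claim about Green's relations in the subsemigroup $T$ to the corresponding principal-ideal equalities in $S$, exploiting regularity. I will only establish part (i) in detail, since part (ii) is the left-right dual and follows by a symmetric argument. The forward direction---that $\R$-relatedness in $T$ implies $\R$-relatedness in $S$---is automatic and requires no regularity: if $aT^1 = bT^1$, then since $T \subseteq S$ we have $aT^1 \subseteq aS^1$ and $bT^1 \subseteq bS^1$, and writing $a = bt$, $b = as$ with $t,s \in T^1 \subseteq S^1$ gives $aS^1 = bS^1$ directly. So the content is entirely in the converse.

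For the converse, suppose $a,b$ are $\R$-related in $S$, so $aS^1 = bS^1$; I want to conclude $aT^1 = bT^1$. Here regularity is essential. The standard trick is to use regularity to replace the arbitrary elements of $S^1$ witnessing $aS^1 = bS^1$ by elements coming from $T^1$. Concretely, since $a$ is regular in $T$, there is $x \in T$ with $axa = a$; set $e = ax \in T$, an idempotent acting as a left identity for $a$ in the sense that $ea = a$, and similarly obtain an idempotent $f = by \in T$ with $fb = b$ from a regular inverse $y$ of $b$ in $T$. Now write $a = bu$ and $b = av$ for some $u,v \in S^1$. The idea is that $a = bu = fbu$ and multiplying appropriately will let me absorb the outside factor into $T$: since $a = av$... actually the cleanest route is to observe $b = av$ gives $b = (ax a)v = (ax)(av)\cdot\text{(suitable regrouping)}$, massaging until the transition element is exposed as an element of $T$. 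I would follow precisely the computation in the proof of \cite[Proposition 2.4.2]{H95}, where one shows $a = b\cdot(\text{element of }T^1)$ and $b = a\cdot(\text{element of }T^1)$ by sandwiching the $S^1$-witnesses between the idempotents $e,f$ and using $ea = a$, $fb = b$ to force the products back into $T$.

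The main obstacle---really the only subtle point---is correctly manipulating the $S^1$-witnesses so that after multiplying by the regularity-derived idempotents of $T$, the resulting transition elements genuinely lie in $T^1$ rather than merely in $S^1$. The delicate bookkeeping is to verify that each intermediate product of the form $x\cdot u$ or $y\cdot v$, where $x,y \in T$ and $u,v \in S^1$, collapses (via an identity like $axa=a$) to something expressible using only elements of $T$; this is where one must be careful about whether the $S^1$-elements $u,v$ equal the adjoined identity or lie in $S$, and about the placement of the idempotents on the correct side to match the one-sided ideal $aT^1$. Once the transition elements are pinned down in $T^1$, the equalities $a \in bT^1$ and $b \in aT^1$ give $aT^1 = bT^1$ immediately. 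Part (ii) is then obtained verbatim by interchanging left and right throughout (replacing $aT^1$ by $T^1a$, and deriving the idempotents as right rather than left identities), so I would simply state that it follows by duality.
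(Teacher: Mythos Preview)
Your proposal is correct and follows the same route as the paper: the paper gives no independent argument, merely citing \cite[Proposition 2.4.2]{H95}, and you explicitly invoke the same reference and sketch its computation. Your worry about bookkeeping is overstated---once you write $b=av$ and use $a=axa$ to get $b=axb=a(xb)\in aT$, and symmetrically $a=bya=b(ya)\in bT$, the argument is one line in each direction with no case split on whether $u,v$ are the adjoined identity.
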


Recall that two elements of $\Tran_n$ are $\R$-, $\LL$-, or $\J$- related if
and only if they have the same kernel, image, or rank, respectively.
\begin{lemma} \label{lem-r-related-idempotents}
  Let $D$ be a digraph. If $D$ contains a cycle and $(a
  \to b)$ is an arc in that cycle, then $(b\to a) \in \genset{D}$ and
  $(a\to b)\R(b \to a)$.
\end{lemma}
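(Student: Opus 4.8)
The plan is to prove the two assertions in turn, using the explicit description of the arcs in $\genset{D}$ from \cite[Lemma~2.3]{YY06} together with the characterisation of Green's $\R$-relation in $\Tran_n$ by kernels. First I would establish that $(b\to a)\in\genset{D}$. By hypothesis $(a\to b)$ is an arc lying on a cycle of $D$, so $(a\to b)\in D$ and $(a\to b)$ belongs to a cycle of $D$. By the description of the closure $\bar D$ recalled in the excerpt, the reversed arc $(b\to a)$ is an arc of $\bar D$ precisely when the forward arc $(a\to b)$ belongs to a cycle of $D$; hence $(b\to a)\in\genset{\bar D}=\genset{D}$. This is immediate and is not the difficulty.

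The main content is showing $(a\to b)\,\R\,(b\to a)$. Both transformations are idempotents of rank $n-1$: the idempotent $(a\to b)$ has image $\nset\setminus\{a\}$ and kernel identifying only $a$ and $b$, while $(b\to a)$ has image $\nset\setminus\{b\}$ and the \emph{same} kernel, namely the equivalence relation whose only non-trivial class is $\{a,b\}$. Thus $(a\to b)$ and $(b\to a)$ have equal kernel, so they are $\R$-related \emph{in $\Tran_n$}. The subtle point is that $\R$-relatedness in the large semigroup $\Tran_n$ need not transfer to the subsemigroup $\genset{D}$, so I cannot simply quote the kernel characterisation and stop.

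The step I expect to be the main obstacle is transferring $\R$-relatedness from $\Tran_n$ down to $\genset{D}$, and for this I would invoke Lemma~\ref{lem:RL_restriction}(i). That lemma requires $(a\to b)$ and $(b\to a)$ to be regular in $\genset{D}$; since they are idempotents of $\genset{D}$, they are automatically regular (an idempotent $e$ satisfies $e\cdot e\cdot e=e$). With both elements lying in the subsemigroup $\genset{D}\leq\Tran_n$, regular in $\genset{D}$, and $\R$-related in $\Tran_n$, Lemma~\ref{lem:RL_restriction}(i) gives that they are $\R$-related in $\genset{D}$, as required.

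One point I would take care to verify explicitly is that the kernels genuinely coincide: the kernel of any transformation of the form $(x\to y)$ is the partition of $\nset$ whose sole non-singleton block is $\{x,y\}$, and this is manifestly symmetric in $x$ and $y$, so $\ker(a\to b)=\ker(b\to a)$. Having checked this and the membership $(b\to a)\in\genset{D}$, the conclusion follows by combining the kernel characterisation of $\R$ in $\Tran_n$ with Lemma~\ref{lem:RL_restriction}(i), completing the proof.
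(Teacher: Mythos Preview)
Your proposal is correct and follows essentially the same approach as the paper: membership of $(b\to a)$ in $\genset{D}$ via \cite[Lemma~2.3]{YY06}, equality of kernels giving $\R$-relatedness in $\Tran_n$, and then Lemma~\ref{lem:RL_restriction}(i) (applicable because idempotents are regular) to pull this down to $\genset{D}$. Your write-up is more explicit about the potential pitfall of confusing $\R$ in $\Tran_n$ with $\R$ in the subsemigroup, but the logical structure is identical.
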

\begin{proof}
  As noted earlier, it follows from \cite[Lemma 2.3]{YY06} that $(b \to a)$
  belongs to $\genset{D}$.  Since $(a\to b)$ and $(b \to a)$ are idempotents, and
  hence regular, and they have equal kernels, it follows follows from
  Lemma~\ref{lem:RL_restriction} that $(a\to b)\R (b \to a)$.
\end{proof}

\begin{proposition}\label{prop-r-trivial}
  Let $D$ be a digraph. Then $\genset{D}$ is $\R$-trivial if and only if $D$ is
  acyclic.
\end{proposition}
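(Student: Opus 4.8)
The plan is to prove both implications of Proposition~\ref{prop-r-trivial}. For the backward direction, I would show that if $D$ is acyclic then $\genset{D}$ is $\R$-trivial. The key observation is that an acyclic digraph is already closed (since by the closure construction in the preliminaries, $\bar D = D$ exactly when every strong component is a graph, and in an acyclic digraph every strong component is trivial). In an acyclic digraph, any arc $(a\to b)$ strictly moves $a$ ``forward'' in some topological order on the vertices. Composing such arcs, I would argue that every element $\alpha\in\genset{D}$ is non-increasing with respect to a fixed topological ordering, and in particular that distinct idempotents cannot be $\R$-related. The cleanest way to get $\R$-triviality is to fix a linear extension of the partial order induced by reachability and show that every $\alpha\in\genset{D}$ satisfies $v\alpha \preceq v$ for all $v$; this monotonicity is preserved under composition since each generator has it. From here one deduces that $\alpha \,\R\, \beta$ forces $\alpha=\beta$, because the $\R$-relation in $\Tran_n$ is equality of kernels, and the combination of ``same kernel'' with the forward-only movement pins down the image and hence the map.

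For the forward direction, I would prove the contrapositive: if $D$ is \emph{not} acyclic, then $\genset{D}$ is not $\R$-trivial. This is where Lemma~\ref{lem-r-related-idempotents} does essentially all the work. If $D$ contains a cycle, pick any arc $(a\to b)$ lying on that cycle. Then Lemma~\ref{lem-r-related-idempotents} gives immediately that $(b\to a)\in\genset{D}$ and that $(a\to b)\,\R\,(b\to a)$. Since $a\neq b$, the transformations $(a\to b)$ and $(b\to a)$ are distinct, so we have exhibited two distinct $\R$-related elements of $\genset{D}$, witnessing that $\genset{D}$ is not $\R$-trivial.

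The forward direction is therefore essentially a one-line application of the preceding lemma, so I expect the main work to lie in the backward direction. The principal obstacle there is making the monotonicity argument fully rigorous: I need to define the right partial or total order on vertices (reachability in the acyclic $D$, extended to a linear order), verify that each generating arc respects it, and then convert ``every element is order-non-increasing and two elements share a kernel'' into genuine equality. The subtle point is that equal kernel alone does not force equality in $\Tran_n$; I must use the order constraint to show that on each kernel class the common forward-movement forces the same image value, so that the two maps coincide pointwise. Once the ordering is set up correctly, this is routine, but care is needed to ensure the argument uses only that $D$ is closed and acyclic, and invokes the characterisation of $\R$ on $\Tran_n$ via kernels exactly where claimed.
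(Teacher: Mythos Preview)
Your forward direction ($\Rightarrow$) matches the paper exactly: it is the contrapositive via Lemma~\ref{lem-r-related-idempotents}.

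Your backward direction has a genuine gap. You propose to deduce $\alpha=\beta$ from ``same kernel plus forward-only movement,'' but that implication is false. Take $D$ to be the directed path $1\to2\to3$: then $(2\to3)$ and $(2\to3)(1\to2)$ both lie in $\genset{D}$, both are monotone with respect to the obvious topological order, and both have kernel $\{\{1\},\{2,3\}\}$, yet they disagree at the vertex $1$. Monotonicity of $\alpha$ and $\beta$ alone only gives a \emph{lower} bound on each $i\alpha$ and $i\beta$; there is nothing to squeeze against, so the image value on a kernel class is not pinned down.

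The repair is immediate and is precisely what the paper does: use the $\R$-relation itself, not merely its consequence ``equal kernels.'' If $\alpha\R\beta$ in $\genset{D}$, then $\beta=\alpha\gamma$ and $\alpha=\beta\delta$ for some $\gamma,\delta\in\genset{D}$. Now apply your monotonicity observation to $\gamma$ and $\delta$: for each $i$ there is a directed walk in $D$ from $i\alpha$ to $(i\alpha)\gamma=i\beta$, and one from $i\beta$ to $(i\beta)\delta=i\alpha$. If $i\alpha\neq i\beta$ for some $i$, these walks concatenate to a cycle, contradicting acyclicity; equivalently, in your linear extension you get $i\alpha\preceq i\beta\preceq i\alpha$, hence $i\alpha=i\beta$. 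The paper phrases this as the contrapositive (distinct $\R$-related elements produce a cycle), but it is the same computation. The detour through $\ker(\alpha)=\ker(\beta)$ is neither needed nor sufficient.
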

\begin{proof}
  ($\Rightarrow$) 
  It follows immediately from Lemma~\ref{lem-r-related-idempotents} that 
  if $D$ contains a cycle, then it is not $\R$-trivial, and so the
  contrapositive of this implication holds.

  ($\Leftarrow$) 
  Again we prove the contrapositive. Suppose that $\alpha,\beta\in\genset{D}$
  are such that $\alpha\not=\beta$ and $\alpha\R\beta$.  Then
  there exist $\gamma,\delta\in\genset{D}$ such that 
  $\alpha\gamma=\beta$ and $\beta\delta=\alpha$, and there is
  $i\in\nset$ with $i\alpha\not=i\beta$.  Hence
  $i\alpha\gamma = i\beta \not=i\alpha$ and $i\alpha=i\alpha\gamma\delta$. The
  former implies that $D$ contains a
  non-trivial path from $i\alpha$ to $i\alpha\gamma$, and the latter that $D$
  contains a path from $i\alpha\gamma$ to $i\alpha$.  Thus $D$ contains a cycle.
\end{proof}

\begin{proposition}\label{prop-l-trivial}
  Let $D$ be a digraph. Then $\genset{D}$ is $\LL$-trivial if and only if the
  following hold:
  \begin{enumerate}[\rm (i)]

    \item 
      the out-degree of every vertex in $D$ is at most $1$; and

    \item 
      $D$ contains no cycles of length greater than $2$.
  \end{enumerate}
\end{proposition}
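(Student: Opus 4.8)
The plan is to prove both directions by contrapositive, mirroring the structure of the $\R$-trivial proof but now exploiting the $\LL$-relation, which (for regular elements) is governed by \emph{image} rather than kernel.

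First I would establish the forward direction by showing that if either (i) or (ii) fails, then $\genset{D}$ is not $\LL$-trivial. Suppose some vertex $a$ has out-degree at least $2$, so $D$ contains distinct arcs $(a\to b)$ and $(a\to c)$ with $b\ne c$. These two idempotents have the same image is false in general, so instead I would look for two distinct $\LL$-related elements directly. The natural dual of Lemma~\ref{lem-r-related-idempotents} is the key technical tool: I expect to prove a companion lemma stating that if $(a\to b)$ lies on a cycle, then $(b\to a)\in\genset{D}$ and $(a\to b)\,\LL\,(b\to a)$ \emph{provided} they have equal image, i.e.\ provided $a$ still appears in the image. Since $\im(a\to b)=\nset\setminus\{a\}$ and $\im(b\to a)=\nset\setminus\{b\}$, these images differ, so $(a\to b)$ and $(b\to a)$ are \emph{never} $\LL$-related; this is why the $\LL$-case is genuinely different from the $\R$-case and why condition (ii) caps cycle length at $2$ rather than forbidding cycles outright. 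Concretely, for a cycle of length $\ge 3$, say $a_0,a_1,\dots,a_{k-1}$, I would construct two distinct $\LL$-related elements by composing arcs around the cycle in two different ways that collapse to the same image but act differently, using that $\genset{\bar D}$ contains all reverse arcs along cycles. For out-degree $\ge 2$ at $a$, I would exhibit $(a\to b)$ and a second element with the same image $\nset\setminus\{a\}$ that differs from it.

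For the converse I would again prove the contrapositive: assuming $\genset{D}$ is not $\LL$-trivial, I produce a violation of (i) or (ii). Suppose $\alpha\ne\beta$ with $\alpha\,\LL\,\beta$. Then there exist $\gamma,\delta\in\genset{D}^1$ with $\gamma\alpha=\beta$ and $\delta\beta=\alpha$, and some point where $\alpha$ and $\beta$ differ. Since arcs $(a\to b)$ act by redirecting $a$ to $b$, left-multiplication composes the \emph{generating} maps first; I would trace how the image is preserved while the action changes, extracting either a vertex whose out-neighbourhood must contain two distinct targets (violating (i)) or a walk structure forcing a cycle of length $\ge 3$ (violating (ii)). The dual of the $\R$-argument is that $\im(\alpha)=\im(\beta)$ while the pre-images differ, which localises the discrepancy to how some vertex is routed, and repeated application of $\gamma\alpha=\beta=\delta^{-1}\alpha$-type relations should force a branching or a long return path in $D$.

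The main obstacle I anticipate is the asymmetry between $\R$ and $\LL$: for $\R$-triviality the clean statement is ``acyclic,'' because $(a\to b)\R(b\to a)$ holds whenever the arc lies on any cycle (equal kernel, automatic). For $\LL$ no such automatic equality of images exists, so I cannot simply invoke a one-line idempotent argument. The delicate part is constructing, in the forward direction, genuinely distinct $\LL$-related pairs witnessing each failure mode, and in the converse, showing that \emph{only} these two failure modes can arise — in particular ruling out that cycles of length exactly $2$ (which are permitted) or out-degree exactly $1$ create $\LL$-relations. I would handle the length-$2$ case carefully: a $2$-cycle gives mutually inverse arcs $(a\to b)$ and $(b\to a)$ whose images differ, so they stay $\LL$-inequivalent, confirming consistency with (ii). The bookkeeping of which principal left ideals coincide, done via Lemma~\ref{lem:RL_restriction} restricting $\LL$ from $\Tran_n$ to $\genset{D}$ on regular elements, is where I expect the real work to lie.
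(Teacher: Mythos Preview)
Your forward direction contains a concrete error. You write that for distinct arcs $(a\to b)$ and $(a\to c)$, ``these two idempotents have the same image is false in general.'' In fact it is \emph{true in general}: since $(a\to b)$ fixes every vertex except $a$ and sends $a$ to $b$, its image is $\nset\setminus\{a\}$, and the same holds for $(a\to c)$. Both are idempotents, hence regular, so Lemma~\ref{lem:RL_restriction} gives $(a\to b)\,\LL\,(a\to c)$ immediately. This is precisely the paper's one-line argument for the failure of (i); there is no need to ``look for two distinct $\LL$-related elements directly'' by some other construction. For the failure of (ii), your proposed construction of elements around a long cycle is unnecessary: the paper simply observes that a cycle of length at least $3$ makes some strong component not a path, so $\genset{D}$ is not $\H$-trivial by Proposition~\ref{prop:H-trivial}, hence not $\LL$-trivial.

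The converse is where the real work lies, and your plan there is too vague to be actionable. You propose the contrapositive (from $\alpha\LL\beta$, $\alpha\ne\beta$, extract a violation of (i) or (ii)), but ``trace how the image is preserved while the action changes'' is not a strategy. The difficulty is that $\LL$ in $\genset{D}$ need not coincide with equality of images for non-regular elements, so $\gamma\alpha=\beta$ and $\delta\beta=\alpha$ give you no direct grip on $\im(\alpha)$ versus $\im(\beta)$. The paper instead argues \emph{directly}: under (i) and (ii), the digraph decomposes into components each of which is a directed path $x_0\to x_1\to\cdots\to x_k$ possibly terminated by a back-arc $(x_k\to x_{k-1})$. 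One then proves monotonicity-type facts about how any $\gamma\in\genset{D}$ moves points along such a path (e.g.\ $x_s\gamma=x_t$ with $t\ge s$ for $s\le k-1$), and uses these, via a case split on whether the discrepancy between $\alpha$ and $\beta$ occurs near the terminal $2$-cycle or not, to show that $\beta=\gamma\alpha$ is impossible for any $\gamma$. You have not identified this structural picture, and without it the converse does not go through.
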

\begin{proof}
  ($\Rightarrow$) 
  We prove the contrapositive (i.e.\   that if either (i) or (ii) is not true,
  then $\genset{D}$ is not $\LL$-trivial).  If there are distinct arcs $\alpha =
  (a \to b)$ and $\beta = (a\to c)$ in $D$, then $\alpha$ and $\beta$ are
  regular, and have the same image, and so $\alpha\LL\beta$. If $D$ contains a
  cycle of length greater than $2$, then $\genset{D}$ is not $\H$-trivial, by
  Proposition~\ref{prop:H-trivial}, and hence it is not $\LL$-trivial.

  ($\Leftarrow$)
  Suppose that both (i) and (ii) both hold.  We begin by making some
  observations about the elements of $\genset{D}$ and their action on the
  vertices of $D$.

  Suppose that $x_0 \in \nset$ is an arbitrary vertex of $D$ with
  out-degree $1$.  By the assumptions on the structure of $D$, there is a
  unique path  
  \begin{equation}\label{eq:*} 
    x_0 \to x_1 \to \cdots \to x_{k-1}\to x_k 
  \end{equation} 
  in $D$  starting at $x_0$, and where $x_{k}$ has out-degree $0$ or $(x_{k}
  \to x_{k-1})$ is an arc in $D$. Since there are no vertices in $D$ with
  out-degree exceeding $1$, it follows that $(x_i \to x_{i + 1})$ is the only
  arc in $D$ starting at $x_i$ for every $i$.  So, if $\gamma \in \genset{D}$,
  then 
  \begin{equation} \label{eq-1-way}
    x_s\gamma = x_t\quad\text{and}\quad s\leq t\quad \text{for all}\quad 
    s \leq k -1.
  \end{equation}
  First, we will show that 
  \begin{multline}\label{eq-2-way}
    \text{if }
    \gamma=\gamma_0\easty{x_0}{x_1}\gamma_1\easty{x_1}{x_2}\gamma_2\cdots
    \gamma_{r-1}\easty{x_{r-1}}{x_r}\gamma_r \text{ where }
    \gamma_0,\gamma_1,\ldots,\gamma_r\in\genset{D},\\
    \text{then } x_0\gamma = x_s\ 
    \text{where either}\ s\geq r\ \text{or } s = k -1\ \text{and }r = k
  \end{multline}
  for all $0\leq r\leq k$.

  We proceed by induction on $r$.  If $r = 0$, then $\gamma = \gamma_0$ and
  $x_0\gamma = x_l$ for some $l\geq 0$ by \eqref{eq-1-way}.  If $r > 0$, then
  by induction there exists $l\geq r - 1$ such that
  $$x_0\gamma_0\easty{x_0}{x_1}\gamma_1\easty{x_1}{x_2}\gamma_2\cdots
  \gamma_{r-1} = x_l.$$ 
    Suppose first that $l \geq r$.  Then $x_0\gamma = x_l\easty{x_{r-1}}{x_r}\gamma_r= x_l\gamma_r$. If $l\leq k-1$, then \eqref{eq-1-way} gives $x_l\gamma_r=x_s$ for some $s\geq l\geq r$. If $l=k$, then by the form of $D$, $x_l\gamma_r=x_k\gamma_r$ can only be one of $x_k$ or $x_{k-1}$.
  On the other hand, if $l = r -1$, then 
  $x_0\gamma = x_{r-1}\easty{x_{r-1}}{x_r}\gamma_r= x_r\gamma_r$,
  and $x_r\gamma_r = x_m$ where $m\geq r$, again by~\eqref{eq-1-way}, unless $r=k$ in which case it is possible that $x_r\gamma_r=x_{k-1}$.  This completes the proof of~\eqref{eq-2-way}.

  Second, suppose that $\gamma\in \genset{D}$ and $x_0\gamma = x_r$ for some
  $r$. Since there is a unique path from $x_0$ to~$x_r$ in $D$, it follows that
  any factorisation of $\gamma$ in the arcs of $D$ must contain each of 
  $$(x_0\to x_1), (x_1\to x_2), \ldots, (x_{r -1}\to x_r)$$
  in this order. In other words,
  \begin{equation}\label{eq-3-way}
    \text{if } x_0\gamma = x_r\ \text{for some }r,\ \text{then }
    \gamma = \gamma_0\easty{x_0}{x_1}\gamma_1\easty{x_1}{x_2}\gamma_2\cdots
    \gamma_{r-1}\easty{x_{r-1}}{x_r}\gamma_r
  \end{equation}
  for some $\gamma_0,\gamma_1,\ldots,\gamma_r\in\genset{D}$. 

  We will now begin the proof of this implication in earnest.  Suppose that
  there are $\alpha,\beta\in\genset{D}$ such that $\alpha\not=\beta$. We will
  show that $\alpha$ and $\beta$ are not $\LL$-related.  Since $\alpha\not =
  \beta$, there exists $x_0\in \nset$ such that
  $x_0\alpha\not=x_0\beta$.  Since at least one of $\alpha,\beta$ does not fix
  $x_0$, it follows that the out-degree of $x_0$ is equal
  to $1$. Suppose that $x_1,\ldots,x_k$ are as in \eqref{eq:*}.  From
  \eqref{eq-1-way}, $x_0\alpha=x_r$ and $x_0\beta=x_s$ for some $r,s\geq0$.  We
  may assume without loss of generality that $r > s$.  We consider two cases
  separately.
  \bigskip

  \noindent\textbf{Case 1: $\boldsymbol{s\leq k - 2}$ or $\boldsymbol{(x_{k} \to
  x_{k-1})}$ is not an arc in $\boldsymbol{D}$.}
  By \eqref{eq-3-way}, 
  $$\alpha=\alpha_0\easty{x_0}{x_1}\alpha_1\easty{x_1}{x_2}\alpha_2\cdots
  \alpha_{r-1}\easty{x_{r-1}}{x_r}\alpha_r \qquad\text{for some }
  \alpha_0,\alpha_1,\ldots,\alpha_r\in\genset{D}.$$
  It follows that if $\gamma\in \genset{D}$ is such that $\beta=\gamma\alpha$,
  then 
  $$\beta = \gamma\alpha =
  (\gamma\alpha_0)\easty{x_0}{x_1}\alpha_1\easty{x_1}{x_2}\alpha_2\cdots
    \alpha_{r-1}\easty{x_{r-1}}{x_r}\alpha_r.$$
  But $x_0\beta = x_s$ and so \eqref{eq-2-way} implies that $s \geq r$, 
  contradicting the assumption that $s<r$. Hence $(\alpha, \beta)\not \in \LL$. 
  \bigskip

  \noindent\textbf{Case 2: $\boldsymbol{s = k - 1}$ and $\boldsymbol{(x_{k} \to
  x_{k-1})}$ is an arc in $\boldsymbol{D}$.}  
  Since $r > s$, it follows that $x_0\alpha=x_k$
  and $x_0\beta=x_{k-1}$.  By \eqref{eq-3-way},  there exist $\alpha_0,
  \alpha_1, \ldots \alpha_k\in \genset{D}$ such that 
  $$ \alpha= \alpha_0\easty{x_0}{x_1}\alpha_1\easty{x_1}{x_2}\alpha_2\cdots
  \alpha_{k-1}\easty{x_{k-1}}{x_k}\alpha_k,$$
  and we may assume without loss of generality that $\alpha_k$ does not have
  $(x_{k} \to x_{k -1})$ as a factor. Suppose that $\gamma\in
  \genset{D}$ is arbitrary. By \eqref{eq-2-way}, 
  $$x_0\gamma  \alpha_0\easty{x_0}{x_1}\alpha_1\easty{x_1}{x_2}\alpha_2\cdots
  \alpha_{k-2}\easty{x_{k-2}}{x_{k-1}}\alpha_{k-1} \in \{x_{k - 1}, x_k\}.$$
  In either case, $x_0\gamma \alpha = x_k \not = x_{k-1} = x_0\beta$ and so 
  $\beta\not=\gamma\alpha$ for any $\gamma\in\genset{D}$, which implies
  $(\alpha,\beta)\not\in\LL$.
\end{proof}

\begin{proposition}\label{prop-j-trivial}
  Let $D$ be a digraph. Then the following are equivalent:
  \begin{enumerate}[\rm (i)]

    \item
      $\genset{D}$ has at most one idempotent in every $\LL$-class and every
      $\R$-class;

    \item
      $\genset{D}$ is $\J$-trivial;

    \item
      $D$ is acyclic and the out-degree of every vertex in $D$ is at most
      $1$.
  \end{enumerate}
\end{proposition}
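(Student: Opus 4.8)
The plan is to establish the cycle of implications (ii)$\Rightarrow$(i), (i)$\Rightarrow$(iii), and (iii)$\Rightarrow$(ii), leaning on Propositions~\ref{prop-r-trivial} and~\ref{prop-l-trivial} together with the standard fact that a finite semigroup is $\J$-trivial if and only if it is both $\LL$-trivial and $\R$-trivial.

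The implication (ii)$\Rightarrow$(i) is immediate. Since $\LL\subseteq\J$ and $\R\subseteq\J$, a $\J$-trivial semigroup is both $\LL$-trivial and $\R$-trivial, so every $\LL$-class and every $\R$-class of $\genset{D}$ is a singleton and therefore contains at most one idempotent.

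For (i)$\Rightarrow$(iii) I would argue by contrapositive, splitting into the two ways in which (iii) can fail. Suppose first that some vertex $a$ has out-degree at least $2$, so that $D$ has distinct arcs $(a\to b)$ and $(a\to c)$. These are idempotents of $\genset{D}$ with the common image $\nset\setminus\{a\}$, hence they are $\LL$-related in $\Tran_n$; being idempotent they are regular in $\genset{D}$, so Lemma~\ref{lem:RL_restriction} gives $(a\to b)\LL(a\to c)$ in $\genset{D}$, placing two distinct idempotents in one $\LL$-class and contradicting (i). Suppose instead that $D$ contains a cycle and let $(a\to b)$ be an arc of that cycle. By Lemma~\ref{lem-r-related-idempotents} the distinct idempotent $(b\to a)$ belongs to $\genset{D}$ and satisfies $(a\to b)\R(b\to a)$, again contradicting (i). Thus (i) forces $D$ to be acyclic with every out-degree at most $1$, which is (iii).

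Finally, (iii)$\Rightarrow$(ii) combines the earlier classifications. As $D$ is acyclic, Proposition~\ref{prop-r-trivial} shows $\genset{D}$ is $\R$-trivial; as $D$ is acyclic (so has no cycle of length greater than $2$) and all out-degrees are at most $1$, Proposition~\ref{prop-l-trivial} shows $\genset{D}$ is $\LL$-trivial. For a finite semigroup one has $\J=\mathscr{D}=\LL\circ\R$, so if $x\,\J\,y$ there is $z$ with $x\,\LL\,z\,\R\,y$, and $\LL$- and $\R$-triviality force $x=z=y$; hence $\genset{D}$ is $\J$-trivial. The only point requiring care is, in the (i)$\Rightarrow$(iii) step, to check that the two exhibited idempotents are $\LL$- or $\R$-related inside $\genset{D}$ and not merely inside $\Tran_n$, which is exactly what Lemmas~\ref{lem:RL_restriction} and~\ref{lem-r-related-idempotents} supply; everything else is routine once the two preceding propositions are in hand.
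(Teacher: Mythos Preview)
Your proof is correct and follows essentially the same route as the paper: the same cycle (ii)$\Rightarrow$(i)$\Rightarrow$(iii)$\Rightarrow$(ii), with (i)$\Rightarrow$(iii) handled by exhibiting two $\LL$-related arc idempotents when some out-degree exceeds~$1$ and two $\R$-related ones (via Lemma~\ref{lem-r-related-idempotents}) when there is a cycle, and (iii)$\Rightarrow$(ii) deduced from Propositions~\ref{prop-r-trivial} and~\ref{prop-l-trivial}. If anything, you are slightly more explicit than the paper in invoking Lemma~\ref{lem:RL_restriction} for the $\LL$-case and in spelling out why $\LL$- and $\R$-triviality imply $\J$-triviality in a finite semigroup.
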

\begin{proof}
  (i) $\Rightarrow$ (iii).
  If $D$ had a vertex of out-degree greater than $1$, then, as in the proof of
  Proposition~\ref{prop-l-trivial},~$\genset{D}$ would contain two distinct
  $\LL$-related idempotents.

  If $D$ contained a cycle, then, by
  Lemma~\ref{lem-r-related-idempotents},~$\genset{D}$ would contain two
  distinct $\R$-related idempotents.

  (iii) $\Rightarrow$ (ii). 
  If $D$ is acyclic and the out-degree of every vertex in $D$ is at most
  $1$, then, by Propositions~\ref{prop-r-trivial}
  and~\ref{prop-l-trivial}, $\genset{D}$ is both $\R$- and $\LL$-trivial. 
  Hence $\genset{D}$ is $\J$-trivial.

  (ii) $\Rightarrow$ (i).
  Since $\genset{D}$ is $\J$-trivial, it is both $\LL$- and $\R$-trivial.  Hence
  every $\LL$-class and every $\R$-class contains exactly one element, and, in
  particular, at most one idempotent.
\end{proof}

\section{Other classical semigroup properties} \label{sec:properties}
A semigroup $S$ is called \textit{completely regular} if every element belongs
to subgroup. Equivalently, a semigroup is completely regular if and only if
every element is $\H$-related to an idempotent.  A finite semigroup $S$ is
completely regular if and only if $x\J x^2$ for all $x\in S$.  

If $D$ is any digraph with at most $2$ vertices, then $\genset{D}$ is a band.
Hence in the next two results we will assume that the number of vertices in $D$
is at least $3$.

We say a digraph $D$ is \textit{directed-bipartite} if there is a partition
of the vertices $\nset$ of $D$ into two parts $V_1$ and $V_2$ such
that every arc $(a\to b)$ of $D$ satisfies $a \in V_1$ and $b \in V_2$.

\begin{proposition} \label{prop:completely_regular}
  Let $D$ be a connected digraph with at least $3$ vertices. 
  Then the following are equivalent:
  \begin{enumerate}[\rm (i)]

    \item
      $\genset{D}$ is a band;

    \item
      $\genset{D}$ is completely regular;

    \item $D$ is directed-bipartite.
  \end{enumerate}
\end{proposition}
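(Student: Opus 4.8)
The plan is to prove the cycle of implications (i) $\Rightarrow$ (ii) $\Rightarrow$ (iii) $\Rightarrow$ (i), of which the first is immediate and the third is a short direct computation, leaving (ii) $\Rightarrow$ (iii) as the substantive step. For (i) $\Rightarrow$ (ii): if $\genset{D}$ is a band then every element is idempotent and hence $\H$-related to an idempotent (itself), so $\genset{D}$ is completely regular. Before proceeding I would record a reformulation of (iii): since $D$ is connected with at least $3$ vertices it has no isolated vertices, and one checks directly that $D$ is directed-bipartite if and only if no vertex has both positive in-degree and positive out-degree. The forward direction is forced because $V_1\cap V_2=\varnothing$, and conversely one simply takes $V_1$ to be the set of vertices of positive out-degree and $V_2$ the set of positive in-degree.

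For (iii) $\Rightarrow$ (i) I would fix a partition $V_1,V_2$ witnessing directed-bipartiteness and observe that every generator $(a\to b)$ has $a\in V_1$, so it fixes every vertex of $V_2$. Consequently every $\alpha\in\genset{D}$ fixes $V_2$ pointwise, while a vertex $v\in V_1$ is either fixed by $\alpha$ or sent into $V_2$ and thereafter fixed. In either case $v\alpha^2=v\alpha$ for all $v$, so $\alpha$ is idempotent and $\genset{D}$ is a band.

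The heart of the argument is (ii) $\Rightarrow$ (iii), which I would prove in contrapositive form using the stated criterion that a finite semigroup is completely regular if and only if $x\J x^2$ for all $x$; equivalently, since $\J$-classes in $\Tran_n$ are graded by rank, it suffices to exhibit $\alpha\in\genset{D}$ with $\rk(\alpha^2)<\rk(\alpha)$. Assume $D$ is not directed-bipartite, so by the reformulation some vertex $v$ carries an incoming arc $(u\to v)$ and an outgoing arc $(v\to w)$. If $u\neq w$, then $u,v,w$ are three distinct vertices forming a path $u\to v\to w$, and a short computation shows that $\alpha=(v\to w)(u\to v)$ acts as $u\mapsto v$, $v\mapsto w$, $w\mapsto w$ and fixes everything else, so $\rk(\alpha)=n-1$; squaring sends $u\mapsto w$ as well, giving $\rk(\alpha^2)=n-2$, whence $\alpha\not\J\alpha^2$.

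The one obstacle is the degenerate case $u=w$, in which $v$ lies on a $2$-cycle with $u$ and has no other neighbours; here the subsemigroup on $\{u,v\}$ is only a two-element band, so no witnessing element lives on $\{u,v\}$ alone. I would resolve this by invoking connectivity: since $D$ is connected with at least $3$ vertices, $u$ must be adjacent to some $x\neq v$. Whether the extra arc is $(u\to x)$ or $(x\to u)$, the vertex $u$ now has an incoming and an outgoing arc whose other endpoints are distinct (one of them being $v$), which returns us to the three-distinct-vertex computation applied at $u$. This completes the contrapositive, and hence the proposition.
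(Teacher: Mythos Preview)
Your proof is correct and follows essentially the same strategy as the paper for the key implication (ii) $\Rightarrow$ (iii): exhibit $\alpha=(v\to w)(u\to v)$ with $\rk(\alpha^2)<\rk(\alpha)$, so that $\alpha\not\J\alpha^2$ already in $\Tran_n$ and hence in $\genset{D}$. There are two minor differences worth noting. First, the paper obtains (i) $\Leftrightarrow$ (iii) by citing \cite{Cameron2016aa}, whereas you supply a direct and self-contained argument for (iii) $\Rightarrow$ (i). Second, you explicitly treat the degenerate case $u=w$ (a $2$-cycle), which the paper glosses over: it simply asserts the existence of arcs $(a\to b)$ and $(b\to c)$ with $a,b,c$ distinct, without saying why connectedness and $n\ge3$ guarantee this; your argument fills that small gap. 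One tiny imprecision in your handling of that case: you claim ``$u$ must be adjacent to some $x\ne v$'', but a priori it might be $v$ that carries the extra arc; of course the roles of $u$ and $v$ in the $2$-cycle are symmetric, so this is immaterial.
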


\begin{proof}
  (i) and (iii) are shown to be equivalent in \cite[Theorem
  2.12]{Cameron2016aa}.

  (i) $\Rightarrow$ (ii). This implication follows
  immediately, since every band is completely regular.

  (ii) $\Rightarrow$ (iii).  We prove the contrapositive.  Suppose that $D$
  contains the arcs $(a\to b)$ and $(b\to c)$, where $a,b,c\in\nset$
  are distinct, and consider $\alpha=(b\to c)(a\to b)$.  Then $\rk(\alpha)=n-1$
  and $\rk(\alpha^2)=n-2$, and so $\alpha$ and $\alpha^2$ are not $\J$-related
  in $\Tran_n$. It follows that $\alpha$ and $\alpha^2$ are not $\J$-related in
  $\genset{D}$, and so $\genset{D}$ is not completely regular.
\end{proof}

\begin{corollary}\label{prop-bands}
  Let $D$ be a connected acyclic digraph with at least $3$ vertices. 
  Then the following are equivalent:
  \begin{enumerate}[\rm (i)]

    \item
      $\genset{D}$ is a band;

    \item
      $\genset{D}$ is completely regular;

    \item
      $\genset{D}$ is regular;

    \item  
      $D$ is directed-bipartite.
  \end{enumerate}
\end{corollary}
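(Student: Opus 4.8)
The plan is to piggyback on Proposition~\ref{prop:completely_regular}, which already supplies the equivalence of (i), (ii), and (iv) (note that condition~(iv) of the corollary is precisely condition~(iii) of that proposition). So the only real work is to fold regularity, condition~(iii), into this chain. Two of the required implications are immediate and need no hypotheses on $D$: since every band is regular, (i)~$\Rightarrow$~(iii) holds, and since every completely regular semigroup is regular, (ii)~$\Rightarrow$~(iii) holds as well. Thus the entire corollary reduces to establishing a single further implication, which I would take to be (iii)~$\Rightarrow$~(i).

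The key point is that the \emph{acyclicity} of $D$ lets us invoke Proposition~\ref{prop-r-trivial}: because $D$ is acyclic, $\genset{D}$ is $\R$-trivial. I would then use the elementary fact that in any $\R$-trivial semigroup the regular elements are exactly the idempotents. The argument is short: if $x = xyx$ for some $y$, put $e = xy$; then $e^2 = e$ and $ex = x$, so $e \in xS^1$ and $x \in eS^1$, whence $x\,\R\,e$, and $\R$-triviality forces $x = e$, an idempotent. Consequently a regular $\R$-trivial semigroup has every element idempotent and is therefore a band. Applying this to $\genset{D}$, which is $\R$-trivial by the acyclicity hypothesis, gives (iii)~$\Rightarrow$~(i) and closes the loop of equivalences.

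The main (and essentially only) step needing care is this passage from regularity to idempotency via $\R$-triviality; everything else is bookkeeping over the equivalences already provided by Propositions~\ref{prop:completely_regular} and~\ref{prop-r-trivial}. A more hands-on alternative would parallel the proof of (ii)~$\Rightarrow$~(iii) in Proposition~\ref{prop:completely_regular}: assuming $D$ is not directed-bipartite, one extracts arcs $(a\to b)$ and $(b\to c)$ with $a,b,c$ distinct and argues that $\alpha = (b\to c)(a\to b)$, whose rank exceeds that of $\alpha^2$, is non-regular in $\genset{D}$. However, the $\R$-trivial route above is cleaner and avoids any explicit computation with such an $\alpha$, so I would present that one.
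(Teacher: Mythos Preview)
Your proposal is correct and follows essentially the same approach as the paper: after reducing to (iii)~$\Rightarrow$~(i) via Proposition~\ref{prop:completely_regular}, you use acyclicity and Proposition~\ref{prop-r-trivial} to get $\R$-triviality, and then argue that a regular $\R$-trivial semigroup is a band. The paper phrases this last step as ``every $\R$-class of a regular semigroup contains an idempotent, and $\R$-classes are singletons,'' while you exhibit the idempotent $e=xy$ explicitly and invoke $x\,\R\,e$; these are the same argument.
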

\begin{proof}
  It suffices to prove that (iii) implies (i), so we suppose that $\genset{D}$
  is regular.
  Since $D$ is acyclic, $\genset{D}$ is $\R$-trivial by
  Lemma~\ref{prop-r-trivial}. Since $\genset{D}$ is regular, it follows that
  every $\R$-class contains an idempotent. But every $\R$-class is of size 1,
  and so every element of $\genset{D}$ is an idempotent. In other words,
  $\genset{D}$ is a band.
\end{proof}
  

There are non-acyclic digraphs $D$ such that $\genset{D}$ is regular but not a
band. For example, if $D$ is any strong tournament with $n\geq3$ vertices, then
$\genset{D} = \Sing_n$, and $\Sing_n$ is regular but not a band. 

For $n\geq1$, an \textit{$n$-fan} is a connected acyclic digraph
isomorphic to the digraph with arcs $(i, n)$ for all $i\in\{1, \ldots,
n-1\}$. A $1$-fan is just a one-vertex digraph with no arcs.  A picture of an
$n$-fan can be seen in Figure~\ref{fig-fan}.

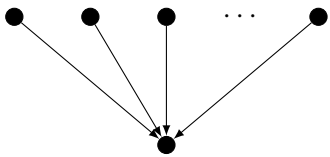
\begin{figure}[ht]
  \centering
  \begin{tikzpicture}[vertex/.style = {circle, 
    draw, 
    fill = black, 
    inner sep = 0.08cm},
    edge/.style   = {arrows = {-angle 90},
    thick}]
    \vertex{1}{2}{0} 
    \vertex{2}{0}{1} 
    \vertex{3}{1}{1} 
    \vertex{4}{2}{1} 
    \vertex{6}{4}{1}  

    \node (5) at (3, 1.7) {$\cdots$};

    \arc{2}{1}
    \arc{3}{1}
    \arc{4}{1}
    \arc{6}{1}

  \end{tikzpicture} 
  \caption{An $n$-fan.}
  \label{fig-fan}
\end{figure}

A semigroup $S$ is called \textit{inverse} if for all $x\in S$ there exists a
unique $y\in S$ such that $xyx = x$ and $yxy = y$.  It is well-known (see, for
example, \cite[Theorem 5.1.1]{H95}) that a semigroup $S$ is inverse if and only
if it is regular and its idempotents commute. The same theorem from \cite{H95}
also says that $S$ is inverse if and only if each $\R$-class and each
$\LL$-class of $S$ contains exactly one idempotent.  A \textit{semilattice} is a
semigroup of commuting idempotents.  For any set $X$, the \textit{power set}
$2^X=\{A:A\subseteq X\}$ of $X$ is a semilattice under $\cup$; the subsemigroup $2^X\setminus\{\varnothing\}$ is called the
\textit{free semilattice of degree $|X|$}.  If $X_1,\ldots,X_k$ are disjoint finite sets, then $(2^{X_1}\times\cdots\times2^{X_k})\setminus\{(\varnothing,\ldots,\varnothing)\}$ is a free semilattice of degree $|X_1|+\cdots+|X_k|$, isomorphic to $2^{X_1\cup\cdots\cup X_k}\setminus\{\varnothing\}$.

\begin{proposition}\label{prop-inverse}
  Let $D$ be a connected digraph. Then the following are equivalent:
  \begin{enumerate}[\rm (i)]
    \item
      $\genset{D}$ is a free semilattice of degree $n-1$;

    \item
      $\genset{D}$ is inverse;

    \item
      $\genset{D}$ is commutative;

    \item
      $D$ is a fan.
  \end{enumerate}
  If any of the above conditions holds, then $|\genset{D}| = 2 ^ {n
  - 1} - 1$.
\end{proposition}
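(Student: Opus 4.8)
The plan is to establish the cycle of implications (iv)$\Rightarrow$(i)$\Rightarrow$(ii)$\Rightarrow$(iv) together with (i)$\Rightarrow$(iii)$\Rightarrow$(iv), which renders all four conditions equivalent, and to read off the cardinality from the proof of (iv)$\Rightarrow$(i).

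For (iv)$\Rightarrow$(i), suppose $D$ is an $n$-fan with sink $n$ and generators $\easty{i}{n}$ for $i\in\{1,\dots,n-1\}$. A direct computation shows $\easty{i}{n}\easty{j}{n}=\easty{j}{n}\easty{i}{n}$ and, more generally, that a product $\easty{i_1}{n}\cdots\easty{i_k}{n}$ depends only on the set $S=\{i_1,\dots,i_k\}\subseteq\{1,\dots,n-1\}$: it is the idempotent $\alpha_S$ collapsing $S$ to $n$ and fixing every other vertex. The map $S\mapsto\alpha_S$ is then a bijection from the nonempty subsets of $\{1,\dots,n-1\}$ onto $\genset{D}$ satisfying $\alpha_S\alpha_T=\alpha_{S\cup T}$, so $\genset{D}$ is the free semilattice of degree $n-1$; in particular $|\genset{D}|=2^{n-1}-1$. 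The implications (i)$\Rightarrow$(ii) and (i)$\Rightarrow$(iii) are then immediate, since a semilattice is commutative and is an inverse semigroup.

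The substance of the argument is the two converses (ii)$\Rightarrow$(iv) and (iii)$\Rightarrow$(iv), both of which I would reduce to the same three structural constraints on $D$, obtained by testing pairs of generating idempotents. Evaluating the products $\easty{a}{b}\easty{a}{c}$, $\easty{a}{b}\easty{b}{c}$ (with $a,b,c$ distinct), and $\easty{a}{b}\easty{b}{a}$ shows that each collapses to a single arc, and in each case the two orders of multiplication disagree unless a forbidden coincidence occurs. Hence if $\genset{D}$ is commutative, then $D$ has out-degree at most $1$ at every vertex, contains no directed path of length $2$ on distinct vertices, and contains no $2$-cycle; in particular $D$ is acyclic. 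For the inverse case I would obtain the same conclusions using the characterization (from \cite[Theorem 5.1.1]{H95}) that each $\R$-class and each $\LL$-class of an inverse semigroup contains a unique idempotent: two arcs out of a common vertex are distinct idempotents with equal image, hence $\LL$-related by Lemma~\ref{lem:RL_restriction}; a cycle yields distinct $\R$-related idempotents via Lemma~\ref{lem-r-related-idempotents}; and the absence of length-$2$ paths follows from the commutativity of the idempotents $\easty{a}{b}$ and $\easty{b}{c}$.

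It remains to deduce that a connected digraph satisfying these constraints is a fan. The constraints force the head of every arc to be a sink and the tail of every arc to be a vertex of in-degree $0$, so every arc runs from a source of out-degree exactly $1$ to a sink; connectivity then forces a single sink, into which all remaining vertices point, which is exactly the definition of a fan. I expect the main obstacle, and the genuinely new content beyond $\J$-triviality, to be precisely this last step: acyclicity together with out-degree at most $1$ alone only yield an in-forest (for instance a directed path is $\J$-trivial by Proposition~\ref{prop-j-trivial}, yet it is neither commutative nor a fan), so the no-length-$2$-path condition extracted from commutativity is indispensable and must be combined carefully with connectivity to pin down the fan.
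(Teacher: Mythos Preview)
Your proof is correct and matches the paper's approach closely: the construction for (iv)$\Rightarrow$(i) and the generator-testing argument for (iii)$\Rightarrow$(iv) are identical to the paper's. The one difference is in handling (ii): the paper proves the single cycle (i)$\Rightarrow$(ii)$\Rightarrow$(iii)$\Rightarrow$(iv)$\Rightarrow$(i), deducing (ii)$\Rightarrow$(iii) by invoking Proposition~\ref{prop-j-trivial} (unique idempotents in each $\LL$- and $\R$-class force $\J$-triviality, hence $\genset{D}$ is a semilattice), whereas you bypass this and prove (ii)$\Rightarrow$(iv) directly by reading off the same three constraints on $D$ from Lemma~\ref{lem:RL_restriction}, Lemma~\ref{lem-r-related-idempotents}, and idempotent commutativity. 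Your route is marginally more self-contained; the paper's buys reuse of an earlier result and gets (ii)$\Rightarrow$(iii) as a standalone implication for free.
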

\begin{proof}
  (i) $\Rightarrow$ (ii).
  Every semilattice is an inverse semigroup. 

  (ii) $\Rightarrow$ (iii). 
  Since $\genset{D}$ is inverse, it has exactly one idempotent in every $\LL$-
  and $\R$-class, and hence by Proposition~\ref{prop-j-trivial}, $\genset{D}$
  is $\J$-trivial. Thus $\genset{D}$ is a semilattice, and hence it is
  commutative.

  (iii) $\Rightarrow$ (iv). 
  Assume that $\genset D$ is commutative.  If $D$ contains distinct arcs $\alpha=(a \to b)$ and $\beta=(b\to c)$, then
  $\alpha\beta\not=\beta\alpha$, a contradiction.  If $D$
  contains distinct arcs $\gamma=(d \to e)$ and $\delta=(d\to f)$, then
  $\gamma\delta\not=\delta\gamma$, a contradiction.  Since $D$ is connected, it
  follows that $D$ is a fan.

  (iv) $\Rightarrow$ (i). 
  We may assume that the unique sink in $D$ is the vertex $n$. 
  If $S$ is any subset of $\{1, \ldots, n - 1\}$, then we define
  $\alpha_S\in\Tran_n$ by
  \begin{equation*}
    v \alpha_S = 
    \begin{cases}
      n & \text{if } v \in S,\\
      v & \text{if } v \not\in S.
    \end{cases}
  \end{equation*}
  If $S = \{s_1, \dots, s_k\}$ is not empty, then $\alpha_S = (s_1 \to n)
  \cdots
  (s_k \to n) \in \genset{D}$.
  Conversely, the arcs of $D$ commute and so any transformation in $\genset{D}$
  is of the form $\alpha_S$ for some non-empty subset $S$ of $\{1, \ldots, n -
  1\}$. If $S$ and $T$ are non-empty subsets of $\{1, \ldots, n - 1\}$, then 
  it is routine to verify that $\alpha_S \alpha_T = \alpha_{S \cup T}$. 
  It follows that the map  $\phi: 2^{\{1, \ldots, n -
  1\}}\setminus\{\varnothing\}\to\genset{D}$ defined by $S\phi = \alpha_S$
  is an isomorphism.
\end{proof}

If $D$ is a digraph with connected components $D_1,  \ldots, D_r$, then 
it follows from Proposition~\ref{prop-direct-product} that $\genset D$ is
inverse if and only if each $\genset{D_i}$ is inverse. From this we obtain the
following corollary to Proposition~\ref{prop-inverse}.

\begin{corollary}
  The number of digraphs (up to isomorphism) with $n$ vertices such that
  $\genset{D}$ is an inverse semigroup is equal to $n-1$.
\end{corollary}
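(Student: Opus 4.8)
The plan is to obtain the corollary as an almost immediate consequence of Proposition~\ref{prop-inverse} together with the remark preceding it, so that the only real work is a short enumeration of isomorphism types. First I would assemble the characterisation. Proposition~\ref{prop-inverse} tells us that, for a connected digraph $C$, the semigroup $\genset{C}$ is inverse exactly when $C$ is a fan. The remark immediately before the corollary, which follows from Proposition~\ref{prop-direct-product}, tells us that $\genset{D}$ is inverse if and only if $\genset{D_i}$ is inverse for every connected component $D_i$ of $D$. Combining these two facts yields the working criterion: $\genset{D}$ is inverse if and only if every connected component of $D$ is a fan.

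With the criterion in hand, I would count the resulting digraphs up to isomorphism. A fan is determined up to isomorphism by its number of vertices, and a $1$-fan is simply an isolated vertex; by the observation at the start of Section~\ref{sec:preliminaries}, deleting isolated vertices does not change $\genset{D}$. Thus the isomorphism type of such a $D$ is pinned down by the order $k$ of its non-trivial fan, the remaining $n-k$ vertices being isolated. For each $k$ in the range $2 \le k \le n$ there is precisely one digraph of this shape up to isomorphism, namely a $k$-fan together with $n-k$ isolated vertices; these are pairwise non-isomorphic and exhaust the possibilities, so the total is $n-1$. As a consistency check, in this case $\genset{D}$ is the free semilattice of degree $k-1$, in agreement with Proposition~\ref{prop-inverse}(i), which is exactly why $k$ ranges over $\{2, \dots, n\}$ and the degree over $\{1, \dots, n-1\}$.

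Since the substantive analytic work is already carried out in Proposition~\ref{prop-inverse}, the corollary reduces to bookkeeping, and I expect the only delicate point---and hence where I would concentrate---to be the treatment of the degenerate and boundary cases in the count. In particular one must exclude the digraph with no arcs at all (every vertex isolated), for which $\genset{D}$ is empty and so not inverse; and one must handle isolated vertices consistently, so that a $k$-fan padded by $n-k$ isolated vertices is counted exactly once for each $k \in \{2, \dots, n\}$ and the enumeration is neither redundant nor incomplete. Once these cases are pinned down, the equality with $n-1$ follows directly.
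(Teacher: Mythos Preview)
There is a genuine gap in your enumeration. You assert that ``the isomorphism type of such a $D$ is pinned down by the order $k$ of its non-trivial fan, the remaining $n-k$ vertices being isolated,'' but nothing in the characterisation forces $D$ to have only \emph{one} non-trivial fan component. For instance, when $n=4$ the digraph consisting of two disjoint $2$-fans (i.e.\ two disjoint arcs) has every component a fan, so $\genset D$ is inverse; yet this digraph is not isomorphic to any single $k$-fan padded with isolated vertices. In fact there are four pairwise non-isomorphic digraphs on four vertices with every component a fan and at least one arc (the $4$-fan; a $3$-fan plus an isolated vertex; a $2$-fan plus two isolated vertices; and two disjoint $2$-fans), not three. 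So if ``up to isomorphism'' is read as digraph isomorphism, the count is not $n-1$ and your list is incomplete.

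The paper's proof makes clear that what is actually being counted is the number of isomorphism types of the \emph{semigroups} $\genset D$, not of the digraphs $D$. The argument runs as follows: if $D$ has fan components of sizes $d_1,\ldots,d_r$, then each $\genset{D_i}$ is a free semilattice of degree $d_i-1$, and combining Proposition~\ref{prop-direct-product} with the remark on products of free semilattices just before Proposition~\ref{prop-inverse} shows that $\genset D$ is a free semilattice of degree $(d_1-1)+\cdots+(d_r-1)=n-r$. Hence the semigroup isomorphism class depends only on the number $r$ of components, which ranges over $\{1,\ldots,n-1\}$, giving exactly $n-1$ classes. Your family of a single $k$-fan together with isolated vertices does furnish one representative for each class, but you still need this computation---that $\genset D$ depends only on $n-r$---to conclude that the digraphs with several non-trivial fan components contribute nothing new.
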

\begin{proof}
  Suppose that $\genset D$ is inverse, that the connected components of $D$ are $D_1,  \ldots, D_r$, and write $d_i=|D_i|$ for each $i$.  It
  follows from Proposition~\ref{prop-inverse} that each $D_i$ is a fan, and
  each $\genset{D_i}$ is a free semilattice of degree $d_i-1$.  It follows that
  $\genset D$ is a free semilattice of degree $(d_1-1)+\cdots+(d_r-1)=n-r$.  So
  the isomorphism class of $\genset D$ is completely determined by $r$, the
  number of connected (fan) components of $D$.  Since $r$ can take any value
  from $1$ to $n-1$, the proof is complete.
\end{proof}


\section{Zeros} \label{sec:zeros}

An element $a$ of a semigroup $S$ is a \textit{left zero} if $ab=a$
for all $b\in S$.  Right zeros are defined analogously.  An element is a
\textit{zero} if it is both a left and right zero.  If a semigroup has a left
zero and a right zero, then it has a unique zero.  In this section, we obtain
necessary and sufficient conditions on a digraph $D$ so that $\genset{D}$ has
various properties associated to left or right zeros.  Some of our results
also classify the digraphs $D$ for which $\genset{D}$ consists of a single
$\R$-, $\LL$-, or $\J$-class (see Proposition~\ref{prop-trivial} for the analogous
result for the $\H$-relation).  We begin with two technical lemmas.

\begin{lemma}\label{lem:terminal1}
  If $D$ is strongly connected, then $\genset{D}$ contains every constant map.
\end{lemma}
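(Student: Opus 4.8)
The plan is to show that for a strongly connected digraph $D$, every constant map on $\nset$ lies in $\genset{D}$. Recall that a constant map sends every vertex to a single fixed target $t \in \nset$; I will denote it $c_t$. Since $\genset{D}$ contains all the arcs $(a \to b)$ for arcs of $D$ (and indeed all arcs of the closure $\bar D$), the strategy is to build each $c_t$ as a product of arcs that successively collapses the whole vertex set onto $t$.

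First I would fix a target vertex $t$ and exploit strong connectivity: for every vertex $v \neq t$, there is a directed path from $v$ to $t$ in $D$, and hence, using the arcs along that path, the transformation $(v \rightsquigarrow t)$ (a product of arcs of the form $(x \to y)$) sends $v$ to $t$ while being built from genuine generators of $\genset{D}$. The key observation is that composing such path-transformations lets me route vertices into $t$ one at a time. More carefully, I would argue that there exists an ordering $v_1, v_2, \dots, v_{n-1}$ of the vertices other than $t$, together with arcs realising short hops, so that the accumulated product maps every vertex to $t$. The cleanest route is probably induction on the number of vertices not yet mapped to $t$: maintain a transformation $\alpha \in \genset{D}$ whose image is some set $I \ni t$, and show that as long as $|I| \geq 2$ one can postcompose with a suitable arc (or short product of arcs) to strictly shrink $I$ while keeping $t$ in the image, until $I = \{t\}$ and $\alpha = c_t$.

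The main obstacle is ensuring that each collapsing step can be performed without ``losing'' the target $t$ or re-expanding the image: an arc $(a \to b)$ only collapses $a$ onto $b$, so I must choose, at each stage, a vertex $a$ currently in the image with an outgoing arc $(a \to b)$ to another image vertex $b$, so that applying it reduces the image size by one. Strong connectivity guarantees that no vertex is a sink, so every vertex has at least one outgoing arc; the subtlety is guaranteeing that the head of that arc also lies in the current image, or else arranging via a path to route it there. I would handle this by picking, among the current image vertices, one whose directed path to $t$ stays ``useful,'' and collapsing along that path; strong connectivity ensures such a path exists and terminates at $t$, so the target is never destroyed.

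Once $c_t \in \genset{D}$ is established for one particular $t$, I would obtain $c_s$ for every other $s \in \nset$ either by the same construction with target $s$, or more slickly by post-multiplying: since $D$ is strongly connected there is a path from $t$ to $s$, giving a transformation $\theta$ with $t\theta = s$, and then $c_t \theta$ is the constant map with value $t\theta = s$ (because $c_t \theta$ sends every vertex to $t\theta$). This shows all constant maps lie in $\genset{D}$, completing the proof. I expect the bookkeeping in the image-shrinking induction to be the only delicate point; everything else is a direct application of strong connectivity.
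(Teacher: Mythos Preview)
Your proposal is correct and follows essentially the same approach as the paper: iteratively shrink the image by post-composing with a path transformation $(i\rightsquigarrow j)$ between two current image points, then transport the resulting constant map to every other constant via strong connectivity. Your worries about ``re-expanding the image'' or needing intermediate path vertices to lie in the current image are unnecessary---post-composition can never increase rank, and since $(i\rightsquigarrow j)$ sends every vertex on the path to $j$ and fixes all others, the image of $\alpha(i\rightsquigarrow j)$ is contained in $\im(\alpha)\setminus\{i\}$; the paper exploits exactly this observation (with $i,j$ any two distinct image points rather than fixing $j=t$), which removes the bookkeeping you anticipated.
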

\begin{proof} 
  Since $D$ is strongly connected, it suffices to show that $\genset{D}$
  contains any constant map.  Suppose that there exists $\alpha\in \genset{D}$
  with rank $r$, where $2\leq r \leq n - 1$. We prove that there exists
  $\beta\in \genset{D}$ with $\rk(\beta) < \rk(\alpha)$, from which the lemma
  follows. Since $\rk(\alpha) \geq 2$, there exist distinct $i, j\in
  \im(\alpha)$, and since $D$ is strongly connected, there is a path in $D$
  from $i$ to $j$. If $(i\rightsquigarrow j)$ denotes the product of the arcs
  in a path from $i$ to $j$, then it is routine to check
  that $\rk(\alpha (i\rightsquigarrow j)) < \rk(\alpha)$, as required.
\end{proof}

\begin{lemma}\label{lem:terminal2}
  If $D$ is connected, then there exists $\alpha\in\genset{D}$ such that
  $i\alpha$ belongs to a terminal component of $D$ for each $i\in\nset$.
\end{lemma}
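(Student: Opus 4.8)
The plan is to push every vertex into the set $T$ of all vertices lying in terminal components of $D$, by repeatedly post-multiplying by products of arcs taken along directed paths. First I would record the elementary behaviour of such a path-product: if $w_0\to w_1\to\cdots\to w_k$ is a directed path in $D$, then (writing transformations on the right, as in the proof of Lemma~\ref{lem:terminal1}) the element $(w_0\rightsquigarrow w_k)=(w_0\to w_1)(w_1\to w_2)\cdots(w_{k-1}\to w_k)\in\genset D$ sends each path vertex $w_0,\ldots,w_{k-1}$ to $w_k$ and fixes every vertex off the path. This is the only computation needed, and it is routine.

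Next I would observe that every vertex admits a directed path into $T$. Since the quotient of $D$ by its strong components is acyclic and finite, from the component of any given vertex there is a directed path in the quotient to a sink, and the sinks are precisely the terminal components. Lifting such a quotient-path back to $D$ --- moving within each component using strong connectivity and crossing from one component to the next along a connecting arc --- produces a directed path in $D$ from the chosen vertex to some vertex of $T$.

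The core of the argument is then an iteration that strictly increases the number of vertices mapped into $T$. Suppose $\beta\in\genset D$ has already been constructed and some vertex $i$ satisfies $i\beta=v\notin T$. Choosing a directed path from $v$ to a vertex $w\in T$ and setting $\gamma=(v\rightsquigarrow w)$, I would examine $\beta\gamma$. The key point is that $\gamma$ fixes every vertex of $T$ that does not lie on the chosen path, while it sends every vertex that does lie on the path --- including any such vertex that happens to belong to $T$ --- to $w\in T$; consequently $j\beta\in T$ implies $j\beta\gamma\in T$, so no previously placed image is lost, whereas $i\beta\gamma=v\gamma=w\in T$, so $i$ is now mapped into $T$ as well. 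Hence $\beta\gamma$ maps strictly more vertices into $T$ than $\beta$ does.

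Finally, starting from any arc of $D$ and applying this step at most $n$ times, the process terminates in some $\alpha\in\genset D$ with $i\alpha\in T$ for every $i\in\nset$, which is exactly the desired conclusion; if every vertex already lies in a terminal component then $T=\nset$ and any arc of $D$ suffices. The one point requiring care --- and the main thing to verify --- is the claim that post-multiplying by $\gamma$ never moves an already-placed image out of $T$. This is precisely why it is convenient to work with the union $T$ of \emph{all} terminal components rather than a single one, and why it is harmless that the chosen path may meet $T$ before reaching $w$.
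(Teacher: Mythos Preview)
Your proposal is correct and follows essentially the same approach as the paper: both push vertices into the union $T$ of terminal components by post-multiplying with path-products $(v\rightsquigarrow w)$, relying on the fact that such a product cannot move a vertex out of $T$. The paper simply writes down the single product $\alpha=\prod_{i=1}^{k}(i\rightsquigarrow t_i)$ over the non-terminal vertices and asserts it works, whereas you unfold this into an iteration and explicitly verify the key invariance (and also handle the edge case $T=\nset$, which the paper's empty product glosses over); the underlying idea is the same.
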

\begin{proof}
  Suppose without loss of generality that $\{1, \ldots, k\}$ are the vertices
  of $D$ that do not belong to a terminal component of $D$. Then for every
  $i\in \{1, \ldots, k\}$ there exists a vertex $t_i$ in a terminal component
  of $D$ such that there is a path in $D$ from $i$ to $t_i$. 
  The product $\alpha = \prod_{i = 1}^{k} (i \rightsquigarrow t_i)$
  has the required property.
\end{proof}

\begin{proposition} \label{prop:zero}
  Let $D$ be a connected digraph. Then the following hold:
  \begin{enumerate}[\rm (i)]

    \item
      \label{it:left-0} $\genset{D}$ has a left zero if and only if all
      terminal components of $D$ are trivial. If this is the case, then
      $\alpha$ is a left zero of $\genset{D}$ if and only if $v \alpha$ belongs
      to a terminal component for all vertices $v$.

    \item
      \label{it:right-0} $\genset{D}$ has a right zero if and only if it has
      exactly one terminal component. If this is the case, then $\alpha$ is a
      right zero of $\genset{D}$ if and only if it is a constant map.

    \item
      \label{it:0} $\genset{D}$ has a zero if and only if it has exactly one
      terminal component $T$, which is trivial. If this is the case, then the
      zero of $\genset{D}$ is the constant map with image $T$.
  \end{enumerate}
\end{proposition}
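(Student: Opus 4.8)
The plan is to handle the three parts in order, reducing (iii) to (i) and (ii) by means of the remark (already in the text) that a semigroup having both a left and a right zero has a unique zero. Two structural observations will carry most of the weight. First, a terminal component $T$ is \emph{invariant} under every element of $\genset{D}=\genset{\bar D}$: since $T$ has no outgoing arcs and the closure adds arcs only within strong components, every arc of $\bar D$ whose source lies in $T$ also has its target in $T$, so $v\alpha\in T$ whenever $v\in T$ and $\alpha\in\genset D$. Second, a vertex $w$ is fixed by every element of $\genset D$ precisely when it is fixed by every generating arc, i.e.\ when $w$ is the source of no arc (out-degree $0$); in a connected $D$ these are exactly the vertices making up the trivial terminal components.

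For (i), I would first note that $\alpha$ is a left zero iff $\alpha\beta=\alpha$ for all $\beta$, equivalently iff every point of $\im(\alpha)$ is a fixed point of $\genset D$, i.e.\ lies in a trivial terminal component; this already gives the stated characterisation of left zeros once we know all terminal components are trivial. Sufficiency of the condition follows from Lemma~\ref{lem:terminal2}: the $\alpha$ it produces sends every vertex into a terminal component, hence (if these are all trivial) onto an out-degree-$0$ vertex, so $\alpha$ is a left zero. For necessity I would argue contrapositively: if some terminal component $T$ is non-trivial, then $T$ is strongly connected with every vertex of out-degree $\ge 1$, so $T$ contains no fixed point of $\genset D$; but invariance forces $v\alpha\in T$ for $v\in T$, whence $\im(\alpha)\cap T\ne\varnothing$, contradicting that a left zero has image consisting of fixed points.

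For (ii), the key computation is that if $\alpha$ is a right zero, then applying $\beta\alpha=\alpha$ with $\beta=(a\to b)$ at the point $a$ yields $b\alpha=a\alpha$ for every arc $(a\to b)$; as $D$ is connected this forces $\alpha$ to be constant, and conversely every constant map in $\genset D$ is trivially a right zero. It then remains to decide when a constant map exists. Invariance of terminal components shows that a constant map with image $\{w\}$ forces $w$ to lie in \emph{every} terminal component, so there can be at most one; since $D$ is finite and connected there is at least one, giving exactly one. Conversely, given a unique terminal component $T$, Lemma~\ref{lem:terminal2} supplies $\alpha$ mapping all vertices into $T$, and since $T$ is strongly connected, Lemma~\ref{lem:terminal1} gives a constant map in $\genset T$; composing produces a constant map in $\genset D$.

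For (iii), I would simply combine the previous two parts: $\genset D$ has a zero iff it has both a left and a right zero, which by (i) and (ii) happens iff all terminal components are trivial \emph{and} there is exactly one, i.e.\ iff there is a unique terminal component $T$ and it is trivial, say $T=\{t\}$. As $t$ has out-degree $0$ it is fixed by every element of $\genset D$, so the only constant map in $\genset D$ is the one with image $\{t\}$; this is the right zero, its image $\{t\}=T$ is terminal so by (i) it is also a left zero, and hence it is the unique zero. The main obstacle I anticipate is the pair of necessity arguments in (i) and (ii); both rest on the invariance of terminal components, and once that fact is isolated the remaining steps are routine.
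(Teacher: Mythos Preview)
Your proposal is correct and follows the same overall architecture as the paper: invariance of terminal components, Lemmas~\ref{lem:terminal1} and~\ref{lem:terminal2} for the sufficiency directions, and the combination of (i) and (ii) for (iii).

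The details of your necessity arguments differ from the paper's, and are in fact a little more elementary. For (i), the paper uses Lemma~\ref{lem:terminal1} to produce, for any two vertices $a,b$ of a terminal component $T$, elements of $\genset D$ collapsing $T$ onto $a$ and onto $b$, and then the left-zero identity forces $a=b$; you instead note directly that a non-trivial strongly connected $T$ has no sinks, so any $\alpha$ (which must map $T$ into $T$) hits a vertex of positive out-degree and cannot be a left zero. For (ii), the paper argues that if there are two terminal components one can find a single vertex $v$ with paths into both, and then $v\alpha=v\beta_i\alpha\in T_i$ gives a contradiction; you instead first prove unconditionally that right zeros are constant (from $(a\to b)\alpha=\alpha$ evaluated at $a$, plus connectedness), and then use invariance to force the constant value into every terminal component. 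Your route has the advantage of establishing the characterisation of right zeros before, rather than after, the existence statement, and of not needing to locate a vertex reaching two terminal components; the paper's route keeps the two implications in (ii) independent of each other.
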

\begin{proof}
  (\ref{it:left-0}). Suppose that $\alpha\in\genset{D}$ is a left zero, let $T$
  be an arbitrary terminal component, and let $a,b\in T$.  By
  Lemma~\ref{lem:terminal1}, there exist $\beta,\gamma\in\genset{D}$ such that
  $t\beta=a$ and $t\gamma=b$ for all $t\in T$.  Let $t\in T$ be arbitrary.
  Since $T$ is terminal, $t\alpha\in T$.  But then, since $\alpha$ is a left
  zero,
  $a=(t\alpha)\beta=t(\alpha\beta)=t\alpha=t(\alpha\gamma)=(t\alpha)\gamma=b$.
  This shows that $|T|=1$.

  Conversely, suppose that all terminal components of $D$ are trivial:
  $\{t_1\}, \dots, \{t_k\}$. By Lemma~\ref{lem:terminal2}, there exists
  $\alpha\in\genset D$ such that $v \alpha$ belongs to a terminal component for
  each vertex $v\in\nset$. Now let $\beta\in\genset{D}$ be
  arbitrary.  Let $v\in\nset$ and put $v\alpha=t_j$.  Then
  $v\alpha\beta=t_j\beta=t_j=v\alpha$, so that $\alpha\beta=\alpha$, and
  $\alpha$ is a left zero.

  On the other hand, suppose that all of the terminal components of $D$ are
  trivial, and that $\alpha\in\genset{D}$ is such that $v \alpha$ does not
  belong to a terminal component for some $v\in\nset$.  Then $D$
  contains some arc $(v\alpha\to j)$.  But then $\alpha\not=\alpha(v\alpha\to
  j)$, so that $\alpha$ is not a left zero.

  (\ref{it:right-0}). Suppose that $D$ has at least two terminal components.
  Then there exist distinct terminal components $T_1$ and $T_2$ and a vertex
  $v$ such that there is a path from $v$ to a vertex from $T_1$ and a path from
  $v$ to a vertex from $T_2$. For $i=1,2$, let $\beta_i\in\genset{D}$ be such
  that $v \beta_i \in T_i$. If $\alpha \in \genset{D}$ is a right zero, then
  $$
        v \alpha = v \beta_1 \alpha \in T_1  \quad\text{and}\quad v \alpha = v
        \beta_2 \alpha \in T_2,
  $$
  which is the desired contradiction. 

  Conversely, suppose that $D$ has only one terminal component $T$ and fix some
  $t\in T$.  By Lemmas~\ref{lem:terminal1} and~\ref{lem:terminal2}, there exist
  $\alpha_1,\alpha_2\in\genset{D}$ such that $\im(\alpha_1)\subseteq T$, and
  $T\alpha_2=\{t\}$.  Then $\alpha_1\alpha_2\in\genset{D}$ is a constant map
  (with image $\{t\}$), and hence a right zero.

  On the other hand, suppose that $D$ only has one terminal component $T$, and
  that $\alpha\in\genset{D}$ is not a constant map.  So $\rk(\alpha)\geq2$.  We
  know that $\genset{D}$ contains some constant map $\beta$.  But then
  $\rk(\beta\alpha)=\rk(\beta)=1\not=\rk(\alpha)$, so that
  $\beta\alpha\not=\alpha$, whence $\alpha$ is not a right zero.
\end{proof}

A semigroup $S$ is called a \textit{left zero semigroup} if every element of
$S$ is a left zero; \textit{right zero semigroups} are defined analogously.

\begin{proposition}\label{prop-left-zeros}
  Let $D$ be a digraph. Then the following are equivalent:
  \begin{enumerate} [\rm(i)]

    \item
      $\genset{D}$ is a left zero semigroup;

    \item
      $\genset{D}$ has a unique $\LL$-class;

    \item
      there is a unique non-trivial connected component $K$ of $D$, and $K^{-1} = \set{(y\to x)}{(x\to y) \in K}$ is a fan.
  \end{enumerate}
\end{proposition}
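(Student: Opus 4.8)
The plan is to prove the equivalence of the three conditions by establishing the cycle of implications (i) $\Rightarrow$ (ii) $\Rightarrow$ (iii) $\Rightarrow$ (i), leaning heavily on the structural results already proved for inverse/commutative semigroups (Proposition~\ref{prop-inverse}) and the classification of $\LL$-trivial semigroups (Proposition~\ref{prop-l-trivial}). First I would observe that a left zero semigroup is precisely a semigroup consisting entirely of left zeros, and that in any left zero semigroup every element is its own principal left ideal, so any two elements are $\LL$-related; this gives (i) $\Rightarrow$ (ii) immediately, since a left zero semigroup trivially has a single $\LL$-class. The implication (ii) $\Rightarrow$ (i) should also be clean: if $\genset{D}$ has a unique $\LL$-class, then for any $\alpha,\beta$ we have $S^1\alpha=S^1\beta$, and in particular every element generates the same principal left ideal as any idempotent arc; using the fact that the arcs are idempotents of rank $n-1$, I would argue that every element of $\genset{D}$ must itself be such an arc and that $\alpha\beta=\alpha$ for all $\alpha,\beta$. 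Since a unique $\LL$-class forces all elements to share the same image and the same rank, combined with $\LL$-triviality arguments from Proposition~\ref{prop-l-trivial} this should pin down that $\genset{D}$ is a right-zero-free structure where composition is constant on the left.

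The substantive part is the digraph-theoretic characterisation in (iii), so I would prove (ii) $\Rightarrow$ (iii) and (iii) $\Rightarrow$ (i) directly. For (iii) $\Rightarrow$ (i), suppose $D$ has a unique non-trivial component $K$ with $K^{-1}$ a fan. By Proposition~\ref{prop-direct-product} and the remark that isolated vertices can be discarded, $\genset{D}\cong\genset{K}$. Writing $K^{-1}$ as a fan with sink $v$, the arcs of $K$ are exactly the reversed arcs $(v\to u)$ for the leaves $u$ of the fan, so every generator of $\genset{K}$ is an arc of the form $(v\to u_i)$ mapping the single vertex $v$ to various targets $u_i$ while fixing everything else. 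I would then compute that for such arcs, $(v\to u_i)(v\to u_j)=(v\to u_i)$, because the first map sends $v$ to $u_i$ and fixes it there, so the product equals the first factor; this shows every generator (hence every element, as products collapse to single generators) is a left zero, giving (i).

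For (ii) $\Rightarrow$ (iii), I would argue contrapositively or structurally from the hypothesis of a single $\LL$-class. A unique $\LL$-class means all elements share the same image, hence the same rank $n-1$ (as arcs have rank $n-1$ and are $\LL$-maximal among idempotents of $\genset{D}$); this already forces the generators to all have the same image, meaning all arcs $(a\to b)$ of $D$ share a common target, i.e.\ all arcs point \emph{into} a single vertex, which is exactly the reversed-fan condition on one component. I would then show connectivity forces a \emph{unique} non-trivial component: if there were two non-trivial components, Proposition~\ref{prop-direct-product} would make $\genset{D}$ a nontrivial direct product, which cannot have a single $\LL$-class. \textbf{The main obstacle} I anticipate is the bookkeeping in (ii) $\Rightarrow$ (iii): translating ``single $\LL$-class'' into the precise statement that all arcs share a common head and live in one component requires ruling out the possibility that longer products in $\genset{D}$ secretly create elements with different images, so I would need to verify carefully (perhaps using the closure $\bar D$ and the $\LL$-triviality machinery of Proposition~\ref{prop-l-trivial}) that no composite element escapes the common-image constraint, and that the out-degree-at-most-one and acyclicity conditions implicit in the fan structure are genuinely forced rather than merely consistent.
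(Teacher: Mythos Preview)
Your cycle (i) $\Rightarrow$ (ii) $\Rightarrow$ (iii) $\Rightarrow$ (i) is the same route the paper takes, and your (i) $\Rightarrow$ (ii) and (iii) $\Rightarrow$ (i) are correct.  There is, however, a genuine slip in your (ii) $\Rightarrow$ (iii): the image of the arc $(a\to b)$ is $\nset\setminus\{a\}$, not $\nset\setminus\{b\}$.  So if all arcs are $\LL$-related (and hence, via Lemma~\ref{lem:RL_restriction} applied to these idempotents, share the same image in $\Tran_n$), they share a common \emph{source} $x$, not a common target.  The arcs of $D$ are thus all of the form $(x\to y)$, which is exactly the condition that $K^{-1}$ is a fan with sink $x$ --- consistent with what you wrote in your (iii) $\Rightarrow$ (i) paragraph, but contradicting your ``all arcs point into a single vertex'' in this one.

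Your ``main obstacle'' evaporates once this is fixed: the computation $(x\to y)(x\to z)=(x\to y)$, which you already carried out for (iii) $\Rightarrow$ (i), shows that the arc set is closed under multiplication, so $\genset{D}=D$ and there are no longer products to analyse.  The paper's proof is exactly this two-line observation; you do not need Proposition~\ref{prop-l-trivial}, the closure $\bar D$, or any worry about composite elements escaping the common-image constraint.  Your direct-product argument for uniqueness of the non-trivial component is fine, though once all arcs share the source $x$ it is automatic that they lie in a single component.
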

\begin{proof}
  (i) $\Rightarrow$ (ii). Every left zero semigroup has a unique
  $\LL$-class.

  (ii) $\Rightarrow$ (iii). 
  Since $\genset{D}$ has a unique $\LL$-class, it follows that all of the arcs
  in $D$ belong to the same $\LL$-class. Hence the arcs in $D$ have the same
  image: say, $\nset\setminus \{x\}$ for some fixed $x\in\nset$. In other words, the arcs in $D$  are all of the form $(x\to
  y)$, $y\not = x$.  

  (iii) $\Rightarrow$ (i). 
  If the only arcs in $D$ are of the form $(x\to y)$ for some fixed $x$, then 
  $(x \to y)(x\to z) = (x \to y)$
  for all $y, z$, and so $\genset{D}=D$ is a left zero semigroup.
\end{proof}

\begin{proposition}\label{prop-right-zeros}
  Let $D$ be a digraph. Then the following are equivalent:
  \begin{enumerate}[\rm(i)]

    \item
      $\genset{D}$ is a right zero semigroup;

    \item
      $\genset{D}$ has a unique $\R$-class;

    \item
      there is a unique non-trivial connected component $K$ of $D$, and $K$ has
      $2$ vertices.
  \end{enumerate}
\end{proposition}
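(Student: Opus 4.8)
The plan is to mirror the structure of Proposition~\ref{prop-left-zeros}, since right zero semigroups are the dual notion. I would establish the cycle of implications (i) $\Rightarrow$ (ii) $\Rightarrow$ (iii) $\Rightarrow$ (i). The first implication is immediate: every right zero semigroup has a unique $\R$-class, because $ab=b$ for all $a,b$ means $S^1a=S^1b=S$ for every pair, so all elements are $\R$-related. The remaining two implications require translating the $\R$-relation (equality of kernels, by the characterisation recalled in the preliminaries) into a structural condition on $D$.

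For (ii) $\Rightarrow$ (iii), I would argue as follows. If $\genset{D}$ has a unique $\R$-class, then all the arcs of $D$ lie in a single $\R$-class, hence have the same kernel. An arc $(a\to b)$ has kernel whose only non-trivial class is $\{a,b\}$, so demanding that all arcs share this kernel forces every arc to be of the form $(a\to b)$ or $(b\to a)$ on a fixed pair $\{a,b\}$; in particular $D$ has (up to isolated vertices) a single non-trivial connected component $K$ supported on two vertices. I should be slightly careful here about whether both arcs $(a\to b)$ and $(b\to a)$ may be present: since any two arcs on the same two vertices share the kernel $\{a,b\}$, this causes no problem, and $K$ having two vertices is precisely the stated conclusion. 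Isolated vertices contribute nothing to $\genset{D}$, so the ``unique non-trivial connected component'' phrasing absorbs them exactly as in Proposition~\ref{prop-left-zeros}.

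For (iii) $\Rightarrow$ (i), suppose the only non-trivial component $K$ has two vertices, say $\{a,b\}$, so the possible arcs are $(a\to b)$ and $(b\to a)$. A direct computation shows that any product of such arcs equals its last factor: for instance $(a\to b)(a\to b)=(a\to b)$, and $(b\to a)(a\to b)=(a\to b)$ since the composite sends both $a$ and $b$ to $b$, and symmetrically. Thus $\genset{D}=\{(a\to b),(b\to a)\}$ (or a singleton if only one arc is present), and in every case $xy=y$ for all $x,y$, so $\genset{D}$ is a right zero semigroup. I expect the main obstacle to be the bookkeeping in (ii) $\Rightarrow$ (iii): one must verify that equality of kernels genuinely pins the support down to a single pair of vertices rather than merely constraining it, and that this is equivalent to the connectivity statement once isolated vertices are discarded. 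This is the dual analogue of the image-based argument in Proposition~\ref{prop-left-zeros}, but kernels are slightly more delicate than images, so I would spell out the kernel computation explicitly rather than simply invoking duality.
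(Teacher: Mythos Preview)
Your proposal is correct and follows essentially the same route as the paper: the cycle (i) $\Rightarrow$ (ii) $\Rightarrow$ (iii) $\Rightarrow$ (i), with (ii) $\Rightarrow$ (iii) deduced from the fact that arcs $(i\to j)$ and $(k\to l)$ are $\R$-related (equivalently, have the same kernel) precisely when $\{i,j\}=\{k,l\}$. The only cosmetic difference is in (iii) $\Rightarrow$ (i): you verify $xy=y$ by a direct two-element computation, whereas the paper observes that $\genset{D}$ embeds in $\Sing_2$, which is already a right zero semigroup---these amount to the same thing.
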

\begin{proof}
  (i) $\Rightarrow$ (ii). Every right zero semigroup has a unique
  $\R$-class.

  (ii) $\Rightarrow$ (iii). 
  Since $\genset{D}$ has a unique $\R$-class, all elements of $D$ are
  $\R$-related.  But $(i\to j)\R(k\to l)$ if and only if $\{i,j\}=\{k,l\}$
  and so all of the arcs of $D$ involve the same two vertices. 
  
  (iii) $\Rightarrow$ (i).
  In this case, $\genset{D}$ is isomorphic to a subsemigroup of $\Sing_2$,
  which is a right zero semigroup, and hence $\genset{D}$ is a right zero
  semigroup also.
\end{proof}

Recall that a semigroup is \textit{simple} if it contains a single $\J$-class.

\begin{proposition}
  Let $D$ be a connected digraph. Then the following are equivalent: 
  \begin{enumerate}[\rm(i)]

    \item
      $\genset{D}$ is a rectangular band;

    \item
      $\genset{D}$ is simple;

    \item
      $\genset{D}$ is a left or right zero semigroup.
  \end{enumerate}
\end{proposition}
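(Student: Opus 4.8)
The plan is to establish the cycle of implications $(iii)\Rightarrow(i)\Rightarrow(ii)\Rightarrow(iii)$, with the last implication carrying essentially all of the work. The two opening implications are purely semigroup-theoretic and ignore $D$ entirely. For $(iii)\Rightarrow(i)$ I would recall that a rectangular band is exactly a band satisfying $xyx=x$ (equivalently $xyz=xz$): if $\genset D$ is a left zero semigroup then $xy=x$ for all $x,y$, so $x^2=x$ and $xyx=(xy)x=x$, and the right zero case is dual. For $(i)\Rightarrow(ii)$, in any rectangular band $a=aba\in S^1bS^1$ and $b=bab\in S^1aS^1$ for all $a,b$, so every two elements are $\J$-related and $\genset D$ is simple.

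For the main implication $(ii)\Rightarrow(iii)$ I would first dispose of the small cases: if $D$ has at most two vertices then $\genset D$ is (a subsemigroup of) the right zero semigroup $\Sing_2$, so $(iii)$ holds; hence I may assume $n\ge 3$. The argument then rests on two observations about a simple $\genset D$. The first is that \emph{every element of $\genset D$ has rank $n-1$}: since $\J$ on a subsemigroup is contained in $\J$ on the overgroup, a single $\J$-class in $\genset D$ forces all its elements into a single $\J$-class of $\Tran_n$, i.e.\ to share a common rank, and this rank is $n-1$ because $\genset D$ contains the arcs of $D$. The second is that \emph{$\genset D$ is a band}: because $\genset D$ is simple we have $x\J x^2$ for all $x$, so by the criterion recalled at the start of Section~\ref{sec:properties} it is completely regular, and then Proposition~\ref{prop:completely_regular} shows it is a band (and that $D$ is directed-bipartite). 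Combining these, every element of $\genset D$ is an idempotent of rank $n-1$, which is precisely an arc; thus $\genset D$ is exactly the set of arcs of $D$.

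The crux is then to show that all of these arcs share a common source. Writing $V=V_1\sqcup V_2$ for the bipartition given by directed-bipartiteness (so sources lie in $V_1$ and targets in $V_2$), I would take two arcs $(a\to b)$ and $(a'\to b')$ with $a\ne a'$ and compute the product: it sends $a\mapsto b$, $a'\mapsto b'$, and fixes all other vertices. Since $a,a'\in V_1$ and $b,b'\in V_2$ are distinct from $a$ and $a'$, the image is $\nset\setminus\{a,a'\}$, so the product has rank $n-2$, contradicting the first observation. Hence all arcs of $D$ have a single common source $x$, so they are exactly the arcs $(x\to y)$; by Proposition~\ref{prop-left-zeros}(iii) this makes $\genset D$ a left zero semigroup, and $(iii)$ holds.

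The step I expect to be the main obstacle is exactly this last one. Simplicity alone only yields a completely simple (Rees matrix) structure, and general rectangular bands possess several $\LL$- and $\R$-classes, so are neither left nor right zero; the whole content of the proposition is that the arc-generated setting rules such examples out. The rank-$(n-2)$ computation is the lever that forces this rigidity: two arcs with distinct sources cannot coexist inside a rank-$(n-1)$ band, which collapses the potentially ``two-dimensional'' rectangular band down to a left zero semigroup when $n\ge 3$, and (via the two-vertex analysis and Proposition~\ref{prop-right-zeros}) to a right zero semigroup when $n=2$.
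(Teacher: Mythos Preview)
Your proof is correct, but it differs from the paper's in two ways worth noting. First, you traverse the cycle in the opposite direction: the paper proves $(iii)\Rightarrow(ii)\Rightarrow(i)\Rightarrow(iii)$, putting the work into $(i)\Rightarrow(iii)$, whereas you prove $(iii)\Rightarrow(i)\Rightarrow(ii)\Rightarrow(iii)$ and concentrate the effort in $(ii)\Rightarrow(iii)$; since you derive the band property along the way, your long implication essentially absorbs the paper's $(ii)\Rightarrow(i)$ and $(i)\Rightarrow(iii)$ into one step. Second, and more substantively, the final contradiction is obtained differently. The paper argues via the Rees structure of a rectangular band: if $\genset D$ is neither left nor right zero, one can pick distinct arcs $\alpha\R\beta\LL\gamma$, deduce $\alpha=(a\to b)$, $\beta=(b\to a)$, $\gamma=(b\to c)$ with $a,b,c$ distinct, and then observe that $\gamma\alpha$ is not idempotent. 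You instead use the directed-bipartite structure from Proposition~\ref{prop:completely_regular} directly: two arcs with distinct sources $a,a'\in V_1$ have targets in $V_2$, so their product has image $\nset\setminus\{a,a'\}$ and rank $n-2$, contradicting the common rank $n-1$. Your route is arguably more elementary in that it avoids invoking the $\R/\LL$-grid of a rectangular band, at the cost of explicitly using the bipartition; the paper's route keeps the argument internal to the semigroup structure. Both yield the same conclusion that for $n\ge3$ the semigroup must be left zero, with the $n\le2$ case handled separately (and giving right zero) in each approach.
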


\begin{proof}
(iii) $\Rightarrow$ (ii).  Every left or right zero semigroup is simple.

(ii) $\Rightarrow$ (i).  If $\genset D$ is simple, then it is completely
  regular.  If $n\geq3$, then we conclude from
  Proposition~\ref{prop:completely_regular} that $\genset D$ is a band; this is
  also true if $n\leq2$.  Every simple band is a rectangular band.

(i) $\Rightarrow$ (iii).  Suppose that $\genset D$ is a rectangular band.
  Since $\genset D$ has a single $\J$-class, every element of $\genset D$ has
  rank $n-1$. It follows that $\genset D$ consists entirely of arcs.  If
  $\genset D$ is not a left or right zero semigroup, then there exist distinct
  arcs $\alpha,\beta,\gamma\in\genset D$ such that $\alpha\R\beta$ and
  $\beta\LL\gamma$.  The former implies that $\alpha=(a\to b)$ and $\beta=(b\to
  a)$ for some $a,b$, and the latter that $\gamma=(b\to c)$ for some $c$, where
  $a,b,c\in\nset$ are distinct.  But then $\gamma\alpha\in\genset D$ is not an
  idempotent, a contradiction.
\end{proof}

Recall that a semigroup $S$ with a zero element $0$ is \textit{0-simple} if
$S^2\not=\{0\}$ and its $\J$-classes are $\{0\}$ and $S\setminus\{0\}$.

\begin{proposition}\label{prop-0-simple}
  Let $D$ be a digraph. Then $\genset{D}$ is
  $0$-simple if and only if the only non-trivial connected component of $D$ is
  one of the following:\vspace{\baselineskip}

  \centering
  \begin{tikzpicture}[vertex/.style={circle, 
    draw, 
    fill=black,
    inner sep=0.08cm}]
    \vertex{1}{0}{0} 
    \vertex{2}{1}{1} 
    \vertex{3}{2}{0} 

    \arc{2}{3}
    \arc{3}{2}
    \arc{3}{1}

  \end{tikzpicture} 
  \qquad\qquad
  \begin{tikzpicture}[vertex/.style={circle, 
    draw, 
    fill=black,
    inner sep=0.08cm}]
    \vertex{1}{0}{0} 
    \vertex{2}{1}{1} 
    \vertex{3}{2}{0} 

    \arc{2}{1}
    \arc{2}{3}
    \arc{3}{2}
    \arc{3}{1}

  \end{tikzpicture} 
\end{proposition}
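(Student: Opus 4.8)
The plan is to use the definition of $0$-simplicity directly: a semigroup $S$ with a zero $0$ is $0$-simple precisely when $S^2\neq\{0\}$ and its only $\J$-classes are $\{0\}$ and $S\setminus\{0\}$. I will repeatedly use that $\J$-related elements of a subsemigroup of $\Tran_n$ are $\J$-related in $\Tran_n$, and hence have equal rank. First I would reduce to the case that $D$ has a unique non-trivial connected component. Isolated vertices may be deleted without changing $\genset{D}$, so suppose $D$ has non-trivial components $D_1,\dots,D_k$ with $k\geq 2$, and use Proposition~\ref{prop-direct-product} to identify $\genset{D}$ with $\genset{D_1}^1\times\cdots\times\genset{D_k}^1\setminus\{(1,\dots,1)\}$. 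If $\genset{D}$ has a zero $(z_1,\dots,z_k)$, then testing against the generators of each factor shows every $z_i$ is a zero of the $i$-th factor, hence an idempotent $z_i\in\genset{D_i}$ with $z_i\neq 1$ (since a component has a non-identity arc). The elements $x=(z_1,1,\dots,1)$ and $x'=(1,z_2,1,\dots,1)$ are then distinct non-zero idempotents, yet every member of $\genset{D}^1\,x\,\genset{D}^1$ has first coordinate $z_1$, so $x'$ does not lie in this ideal. Thus $x$ and $x'$ are not $\J$-related, contradicting that $\genset{D}\setminus\{0\}$ is a single $\J$-class. Hence $k=1$ and I may assume $D$ is connected.

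For the forward implication, assume $\genset{D}$ is $0$-simple with zero $z$. By Proposition~\ref{prop:zero}, $D$ has a unique terminal component $\{t\}$, it is trivial, and $z$ is the constant map onto $t$; in particular $\rk(z)=1$. Since $\genset{D}\setminus\{0\}$ is a single $\J$-class, all non-zero elements share a common rank, and because the generating arcs have rank $n-1$ (the cases $n\leq 2$ being checked directly and yielding nothing), that common rank is $n-1$. Now write $z=\epsilon_1\cdots\epsilon_m$ as a product of arcs with $m$ as small as possible. Minimality gives $m\geq 2$ and that $w:=\epsilon_1\cdots\epsilon_{m-1}$ is non-zero, so $\rk(w)=n-1$; as the single arc $\epsilon_m$ identifies only one pair of points, $\rk(w\epsilon_m)\geq\rk(w)-1$. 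Therefore $1=\rk(z)\geq (n-1)-1$, forcing $n\leq 3$, and hence $n=3$.

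It remains to inspect the connected, closed digraphs on three vertices possessing a unique, trivial terminal component, and to prove the converse by exhibiting $0$-simplicity for the two listed digraphs. With the sink named $t$ and the remaining vertices $u,v$, the only choices are whether $\bar D$ contains the edge $\{u,v\}$ (equivalently, the $2$-cycle on $u$ and $v$) and which of the arcs $(u\to t),(v\to t)$ occur. If $\{u,v\}$ is not an edge then $D$ is acyclic, and one checks that the resulting semigroups have more than two $\J$-classes (for instance the directed path $u\to v\to t$ yields a four-element semigroup, and the fan yields a three-element semilattice, neither $0$-simple). So $\bar D$ must contain $\{u,v\}$ together with one or both arcs into $t$, and up to isomorphism these are exactly the two digraphs in the statement. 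For each, a direct computation (the semigroups have $5$ and $7$ elements, respectively) shows that $\genset{D}\setminus\{0\}$ is a single $\J$-class, completing the converse.

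The two delicate points are the reduction to a single component, where the key is to produce two non-zero idempotents lying in incomparable principal ideals, and the final three-vertex case analysis. The latter is the main obstacle: once $n=3$ is known the list of candidate digraphs is short, but one must argue cleanly that every configuration lacking the $2$-cycle on $u$ and $v$ fails to make $\genset{D}\setminus\{0\}$ a single $\J$-class, while simultaneously verifying that the two surviving digraphs really do collapse all rank-$2$ elements into one $\J$-class (here Lemma~\ref{lem-r-related-idempotents} and Lemma~\ref{lem:RL_restriction} are useful for linking the idempotent arcs across the strong component $\{u,v\}$).
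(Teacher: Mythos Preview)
Your proof is correct and follows essentially the same route as the paper: reduce to a single non-trivial connected component, use Proposition~\ref{prop:zero} to identify the zero as a constant map, observe that all non-zero elements must have rank $n-1$, deduce $n\leq 3$ from a rank step-down, and finish by inspecting the three-vertex digraphs. Your multi-component reduction (via the direct-product decomposition and a pair of incomparable non-zero idempotents) differs from the paper's more elementary rank-counting argument, and your minimal-factorisation step makes explicit the fact that rank drops by at most one per arc, which the paper asserts without justification; otherwise the two arguments coincide.
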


\begin{proof}

  ($\Leftarrow$)
  It is straightforward to verify that $\genset{D}$ is 0-simple if $D$ is
  either of the given digraphs, using GAP~\cite{GAP4} for instance.

  ($\Rightarrow$) 
  Suppose that $\genset D$ is 0-simple and, without loss of generality, that
  $D$ has no isolated vertices.  Since $\genset D$ is 0-simple, it has two
  $\J$-classes, and the minimum one contains only the zero element.  In
  particular, $\genset D$ contains at most one element of rank smaller than $n
  -1$ and no elements of rank smaller than $n - 2$. 

  Suppose that $D$ has two connected components $D_1$ and $D_2$.  If there is
  an arc $\alpha$ in $D_1$, and distinct arcs $\beta,\gamma$ in $D_2$, then
  $\alpha\beta,\alpha\gamma\in\genset D$ are distinct elements of rank $n-2$, a
  contradiction. Hence, if $D$ has more than one connected component, then
  every connected component has exactly one arc. In this case, $\genset{D}$
  contains at least 2 arcs and the zero element and so $|\genset{D}| \geq 3$. 
  But Proposition~\ref{prop-j-trivial} implies that $\genset D$ is $\J$-trivial
  and so $\genset{D}$ has at least three $\J$-classes, a contradiction.  
  Thus $D$ is connected.  

  If $\alpha\in \genset{D}$ is the zero element, then by
  Proposition~\ref{prop:zero}, $\alpha$ is  constant which implies that $1 =
  \rk(\alpha) \geq n - 2$, and so $n \leq 3$. 
  It is possible to check that if $D'$ is any digraph with at most  3 vertices
  such that $\genset{D'}$ is 0-simple, then $D'$ is isomorphic to one of the
  two given digraphs. 
\end{proof}

\begin{proposition}\label{prop-congruence-free}
  Let $D$ be a digraph. Then $\genset{D}$ is
  congruence-free if and only if the only non-trivial connected component of $D$ is
  one of the following:
  \vspace{\baselineskip}

  \centering
  \begin{tikzpicture}[vertex/.style={circle, 
    draw, 
    fill=black,
    inner sep=0.08cm} ]
    \vertex{1}{0}{1} 
    \vertex{2}{0}{0} 

    \arc{1}{2}

  \end{tikzpicture} 
  \qquad\qquad
  \begin{tikzpicture}[vertex/.style={circle, 
    draw, 
    fill=black,
    inner sep=0.08cm}]
    \vertex{1}{0}{1} 
    \vertex{2}{0}{0} 

    \arc{1}{2}
    \arc{2}{1}

  \end{tikzpicture} 
  \qquad\qquad
  \begin{tikzpicture}[vertex/.style={circle, 
    draw, 
    fill=black,
    inner sep=0.08cm}]
    \vertex{1}{0}{0} 
    \vertex{2}{1}{1} 
    \vertex{3}{2}{0} 

    \arc{2}{1}
    \arc{2}{3}

  \end{tikzpicture} 
  \qquad\qquad
  \begin{tikzpicture}[vertex/.style={circle, 
    draw, 
    fill=black,
    inner sep=0.08cm}]
    \vertex{1}{0}{0} 
    \vertex{2}{1}{1} 
    \vertex{3}{2}{0} 

    \arc{2}{3}
    \arc{3}{2}
    \arc{3}{1}

  \end{tikzpicture} 
  \qquad\qquad
  \begin{tikzpicture}[vertex/.style={circle, 
    draw, 
    fill=black,
    inner sep=0.08cm}]
    \vertex{1}{0}{0} 
    \vertex{2}{1}{1} 
    \vertex{3}{2}{0} 

    \arc{2}{1}
    \arc{2}{3}
    \arc{3}{2}
    \arc{3}{1}

  \end{tikzpicture} 
\end{proposition}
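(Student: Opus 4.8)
The strategy is to combine the general structure theory of finite congruence-free semigroups with the classifications already obtained in this section. For the backward implication I would verify congruence-freeness of each of the five listed semigroups directly. The first digraph gives a one-element semigroup (Proposition~\ref{prop-trivial}), and the second and third give two-element semigroups (a right-zero and a left-zero semigroup, respectively); any semigroup with at most two elements is congruence-free, since the only equivalence relations on a two-element set are the trivial and the universal one, both of which are automatically congruences. The fourth and fifth digraphs are precisely the two digraphs appearing in Proposition~\ref{prop-0-simple}, so $\genset{D}$ is $0$-simple in these cases; as these semigroups are small, one checks directly (for instance with GAP) that they admit no proper nontrivial congruence, which is strictly stronger than being $0$-simple.

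For the forward implication, assume $\genset{D}$ is congruence-free and, without loss of generality, that $D$ has no isolated vertices. I would first reduce to the connected case. If $D$ had at least two nontrivial components $D_1,\dots,D_k$, then by Proposition~\ref{prop-direct-product} the semigroup $S=\genset{D}$ is the subsemigroup $T=\prod_{i}\genset{D_i}^{1}\setminus\{(1,\dots,1)\}$ of a direct product. The projection onto the first coordinate restricts to a surjective homomorphism $T\to\genset{D_1}^{1}$ whose codomain has at least two elements and whose kernel identifies distinct elements of $T$; this kernel is then a congruence that is neither trivial nor universal, contradicting congruence-freeness. Hence $D$ has a single nontrivial component, and I may take $D$ connected.

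Next I would invoke the Rees congruences: collapsing an ideal $I$ of $S$ to a point gives a congruence that is trivial exactly when $|I|=1$ and universal exactly when $I=S$, so congruence-freeness forces every ideal of $S$ to be a singleton or all of $S$. In particular the minimal ideal $K$ of $S$ satisfies $|K|=1$ or $K=S$. If $K=S$, then $S$ is simple, so by the rectangular-band classification preceding Proposition~\ref{prop-0-simple} it is a left- or right-zero semigroup; since every equivalence relation on such a semigroup is a congruence, congruence-freeness caps $|S|$ at two, and Propositions~\ref{prop-trivial},~\ref{prop-left-zeros} and~\ref{prop-right-zeros} match $D$ with the first, second, or third digraph. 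If instead $K=\{0\}$, then the only ideals are $\{0\}$ and $S$; the null case $S^{2}=\{0\}$ cannot occur, because $S$ is generated by idempotent arcs while the only idempotent of a null semigroup is its zero, so $S$ is $0$-simple and Proposition~\ref{prop-0-simple} identifies the nontrivial component of $D$ as the fourth or fifth digraph.

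The step I expect to be the main obstacle is establishing the dichotomy ``simple or $0$-simple'' cleanly, with all degenerate cases handled: one must confirm that the Rees-congruence argument excludes every intermediate ideal, that the null semigroup alternative is genuinely vacuous here because $\genset{D}$ is idempotent-generated, and that in the simple case congruence-freeness really does bound the size by two. Once this is in place, matching each resulting semigroup to a specific digraph in the list is routine given the classifications cited above.
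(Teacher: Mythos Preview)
Your argument is correct. The backward direction matches the paper exactly. For the forward direction, the paper takes a shorter route: it simply cites Howie's structure theorem for finite congruence-free semigroups (Theorems 3.7.1 and 3.7.2 of \cite{H95}), which says such a semigroup has $|S|\le 2$, is $0$-simple, or is a simple group; Proposition~\ref{prop-trivial} disposes of the simple-group case (giving $D_1$), Proposition~\ref{prop-0-simple} handles the $0$-simple case (giving $D_4,D_5$), and it remains only to observe that $|\genset{D}|=2$ forces $D_2$ or $D_3$.

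Your approach instead re-derives the relevant part of Howie's theorem from scratch: the Rees-congruence argument shows every ideal is $\{0\}$ or $S$, giving the dichotomy ``simple or $0$-simple'' (the null alternative being excluded by idempotent generation, as you note). In the simple branch you then invoke the rectangular-band proposition to force a left- or right-zero semigroup, and use the observation that every equivalence on such a semigroup is a congruence to bound $|S|\le 2$. This is a genuinely different organisation: your proof is longer but entirely self-contained, avoiding the external reference, whereas the paper's is a two-line appeal to a known classification. Your explicit reduction to the connected case via Proposition~\ref{prop-direct-product} is also something the paper leaves implicit.
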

\begin{proof}
  Let $D_1, D_2, D_3, D_4, D_5$ (left to right) be the digraphs in the statement
  of the proposition. 

  ($\Leftarrow$) The semigroups $\genset{D_1},\genset{D_2},\genset{D_3}$ have size at most $2$, and so are congruence-free.  It is straightforward to verify that $\genset{D_4}$
  and $\genset{D_5}$ are both congruence-free (using GAP~\cite{GAP4} for
  instance).

  ($\Rightarrow$) If $\genset{D}$ is congruence-free, then either
  $|\genset{D}|\leq 2$, $\genset{D}$ is 0-simple, or $\genset{D}$ is a simple
  group; see \cite[Theorems 3.7.1 and 3.7.2]{H95}. So, by
  Propositions~\ref{prop-trivial} and~\ref{prop-0-simple}, it suffices to note
  that the only digraphs $D$ so that $|\genset{D}| = 2$ are the digraphs
  $D_2$ and $D_3$.
\end{proof}

\end{document}